\documentclass[a4paper,reqno,11pt]{amsart}
%\addtolength{\textwidth}{2cm} \addtolength{\hoffset}{-1cm}
%\addtolength{\textheight}{2cm} \addtolength{\voffset}{-.5cm}
%\usepackage[margin=1.3in]{geometry}
\usepackage[left=2.8cm,right=2.8cm,top=2.8cm,bottom=2.8cm]{geometry}
\usepackage[utf8]{inputenc}	
\usepackage{indentfirst} %Per avere il rientro nella prima riga di ogni capitolo.
\usepackage{amsmath,amsfonts,amssymb,,mathrsfs,mathtools}
\usepackage{booktabs}
\usepackage{esint} 
\usepackage{bbm}
\usepackage[colorlinks=true]{hyperref}
\usepackage{enumerate}
\usepackage{graphicx}
\usepackage{xcolor}
\usepackage{todonotes}
\usepackage[shortlabels]{enumitem}
\usepackage{setspace}
%\setstretch{1.05}
%\linespread{1.15}
\onehalfspacing

\setcounter{tocdepth}{2} % include subsections

\newcommand{\restr}{%
  \,\raisebox{-.127ex}{\reflectbox{\rotatebox[origin=br]{-90}{$\lnot$}}}\,%
}

\theoremstyle{plain}
\newtheorem{definition}{Definition}[section]
\theoremstyle{plain}
\newtheorem{remark}[definition]{Remark}
\theoremstyle{plain}
\newtheorem{theorem}[definition]{Theorem}
\theoremstyle{plain}
\newtheorem{lemma}[definition]{Lemma}
\theoremstyle{remark}

\theoremstyle{plain}
\newtheorem{corollary}[definition]{Corollary}
\theoremstyle{plain}
\newtheorem{proposition}[definition]{Proposition}
\theoremstyle{plain}

\theoremstyle{plain}

\numberwithin{equation}{section}
\allowdisplaybreaks

\def\XXint#1#2#3{{\setbox0=\hbox{$#1{#2#3}{\int}$}
      \vcenter{\hbox{$#2#3$}}\kern-.5\wd0}}

\definecolor{ao}{rgb}{0.0, 0.5, 0.0}

\def\namedlabel#1#2{\begingroup
    #2%
    \def\@currentlabel{#2}%
    \phantomsection\label{#1}\endgroup
}

\makeatletter
\@namedef{subjclassname@2020}{%
  \textup{2020} Mathematics Subject Classification}
\makeatother

\title[Besicovitch-Federer projection theorem for measures]{Besicovitch-Federer projection theorem for measures}

\author[Emanuele Tasso]{Emanuele Tasso}

\AtEndDocument{
	\bigskip
	\footnotesize{
		\noindent
		 E. Tasso.
		
		\noindent
		Institute of Analysis and Scientific Computing,
		TU Wien,
		
		\noindent
		Wiedner Hauptstra\ss e 8-10, 1040 Vienna, Austria.
		
		\noindent
		E-mails:
		\href{mailto:emanuele.tasso@tuwien.ac.at}{\tt emanuele.tasso@tuwien.ac.at}, 
}}

\begin{document}

% \begin{itemize}
% \item measurability 3.23
% \end{itemize}
\begin{abstract}
 	  In this paper we establish a Besicovitch-Federer type projection theorem for general measures. Specifically, let $\mu$ be a finite Borel measure on $\mathbb{R}^n$ and let $0 < m < n$ be an integer. We show that, under the sole assumption that the slice $\mu \cap W$ is atomic for a typical $(n-m)$-plane $W \subset \mathbb{R}^n$, pure unrectifiability can be characterized simultaneously by the $\mu$-almost everywhere injectivity of the orthogonal projection $\pi_V \colon \mathbb{R}^n \to V$ and by the singularity of the projected measure for a typical $m$-plane $V$. In particular, no assumption on $\pi_V\mu$ is required a priori. This yields a new rectifiability criterion via slicing for Radon measures. The result is new even in the classical setting of Hausdorff measures, and it further extends to arbitrary locally compact metric spaces endowed with a generalized family of projections.

		\medskip 
  
		\noindent
		{\it 2020 Mathematics Subject Classification: 49Q15, %Geometric measure and integration theory, integral and normal currents in optimization
                                                          28A75, %Length, area, volume, other geometric measure theory
                                                          28A50  %Integration and disintegration of measures
                                                          }

		\smallskip
		\noindent
		{\it Keywords and phrases:}
Rectifiability, projections, almost-everywhere injectivity, disintegration.

	\end{abstract}

 \maketitle

\section{Introduction}
A central theme in geometric measure theory is the relationship between geometric irregularity and the behavior of orthogonal projections. 
In this framework, irregularity is captured by the notion of \emph{purely unrectifiable} sets, which stand in sharp contrast to \emph{rectifiable} (or regular) ones. 
Since these two classes capture complementary geometric behavior, it is natural to seek criteria that characterize the purely unrectifiable part of a set, or, more genrally, of a measure.

For planar sets of finite length, Besicovitch established a fundamental link between unrectifiability and projections, showing that rectifiability can be detected from the size of one-dimensional projections: a set $E \subset \mathbb{R}^2$ with $\mathcal{H}^1(E) < \infty$ is purely $1$-unrectifiable if and only if
\[
\mathcal{H}^1(\pi_\ell(E))=0, \ \ \text{ for almost every line $\ell$ through the origin,}
\]
where $\pi_\ell \colon \mathbb{R}^2 \to \ell$ denotes the orthogonal projection and the almost every condition refers to the orthogonally invariant measure on the sets of all $1$-dimensional subspaces of $\mathbb{R}^2$.

This landmark result was extended by Federer to subsets of $\mathbb{R}^n$ with finite $m$-dimensional Hausdorff measure \cite{fed1}, and, more recently, by White \cite{whi2} through a completely different approach which consists of a dimension inductive argument assuming the validity of Besicovitch's result. Subsequent developments have broadened the framework to encompass more general families of projections. Hovila et al. \cite{hov} pursued this direction using the notion of transversality, while Pugh \cite{pug} investigated Lipschitz perturbations uniformly close to the identity (see also \cite{gal}). The analysis has further been extended to non-Euclidean settings: De Pauw \cite{depa2} and, independently, Bate, Csörnyei, and Wilson \cite{bate2} obtained results in Banach spaces, both in the negative direction. In contrast, Bate \cite{bate1} strengthened and refined Pugh’s theorem in the positive direction by considering $1$-Lipschitz perturbations and quantifying exceptionality within this class via residual sets with respect to the supremum norm. His result applies in the broad context of complete metric spaces.

All these results, in both Euclidean and metric contexts, rely on the assumption of finite Hausdorff measure. The present work asks whether a characterization of unrectifiability remains possible for general finite Borel measures $\mu$ in $\mathbb{R}^n$, via orthogonal projections. 

When working with orthogonal projections, it is natural to consider the disintegration of $\mu$ with respect to the projection $\pi_V \colon \mathbb{R}^n \to V$ onto the $m$-plane $V$ of the Grasmannian $\mathrm{Gr}(n,m)$, which takes the form 
\[
\mu = \mu_{V,y} \otimes \pi_V \mu, 
\]
for a uniquely determined family of probability measures $(\mu_{V,y})$, each concentrated on its corresponding fiber $\pi_V^{-1}(y)$, and where $\pi_V \mu$ denotes the measure on $V$ obtained as the pushforward of $\mu$ under $\pi_V$.

\subsection{Main result.}
We now state the main result of the paper for the family of orthogonal projections. A more general formulation, valid for locally compact metric spaces, will be given at the end of Section~\ref{s:rvs}.

\begin{theorem}
\label{t:mainthm1}
Let $\mu$ be a finite Borel measure on $\mathbb{R}^n$ and let $\Sigma \subset \emph{Gr}(n,m)$ be a Borel set. Assume that for $\gamma_{n,m}$-a.e. $V \in \Sigma$ it holds
\begin{equation}
\label{e:mainthm1.1}
\mu_{V,y} \text{ is purely atomic for $\mathcal{H}^m$-a.e.\ $y \in V$}.
\end{equation}
Then, for $\gamma_{n,m}$-a.e.\ $V \in \Sigma$,
\begin{equation}
\label{e:mainthm1.2}
\pi_V \mu \perp \mathcal{H}^m
\quad \text{and} \quad
\pi_V\colon \mathbb{R}^n \to V
\text{ is injective on a set of full $\mu$-measure}
\end{equation}
if and only if $\mu$ is concentrated on a purely $m$-unrectifiable set.
\end{theorem}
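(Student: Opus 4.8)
\emph{Proof plan.} The statement is an equivalence, and we may assume $\gamma_{n,m}(\Sigma)>0$, the opposite case being degenerate. The plan is to treat the two implications separately, the implication ``\eqref{e:mainthm1.2} holds for $\gamma_{n,m}$-a.e.\ $V$ $\Rightarrow$ $\mu$ purely unrectifiable'' being the softer one. For it we argue by contraposition: if $\mu$ is not concentrated on any purely $m$-unrectifiable set then, using \eqref{e:mainthm1.1} to exclude the case in which all of the ``rectifiable mass'' of $\mu$ sits on $\mathcal{H}^m$-null sets, one extracts a nonzero submeasure $\nu\le\mu$ which is absolutely continuous with respect to $\mathcal{H}^m$ and concentrated on an $m$-rectifiable set $R$ — concretely, the $\mathcal{H}^m$-absolutely continuous part of $\mu$ carried by a $\mu$-essentially maximal $m$-rectifiable Borel set. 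Since $R$ has an approximate tangent $m$-plane at $\mathcal{H}^m$-a.e.\ point and, for any fixed $m$-plane $T$, the projection $\pi_V$ is invertible on $T$ for $\gamma_{n,m}$-a.e.\ $V$, the area formula gives $\pi_V\nu\ll\mathcal{H}^m$ with $\pi_V\nu\neq 0$ for $\gamma_{n,m}$-a.e.\ $V$; as $\pi_V\mu\geq\pi_V\nu$, this yields $\pi_V\mu\not\perp\mathcal{H}^m$ on a $\gamma_{n,m}$-positive subset of $\Sigma$, contradicting \eqref{e:mainthm1.2}.

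For the converse we may assume $\mu=\mu\restr N$ with $N$ a Borel purely $m$-unrectifiable set, and we must produce both assertions in \eqref{e:mainthm1.2} for $\gamma_{n,m}$-a.e.\ $V\in\Sigma$. The plan is to split $\mu=\mu_\infty+\mu_{\mathrm f}+\mu_0$ according to whether the upper $m$-density $\Theta^{*m}(\mu,\cdot)$ is infinite, finite and strictly positive, or zero, and to treat the three parts independently — harmless, since $\pi_V$ is $1$-Lipschitz, push-forward preserves absolute continuity, and the injectivity condition, once reformulated as below, is additive over the pieces. The part $\mu_\infty$ is concentrated on an $\mathcal{H}^m$-null set, so $\pi_V\mu_\infty$ is concentrated on its Lipschitz — hence $\mathcal{H}^m$-null — image, for \emph{every} $V$. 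The part $\mu_{\mathrm f}$ is carried by a set of $\sigma$-finite $\mathcal{H}^m$-measure, which is contained in $N$ and hence purely $m$-unrectifiable, and on that carrier $\mu_{\mathrm f}\ll\mathcal{H}^m$; applying the classical Besicovitch--Federer projection theorem to each piece of finite $\mathcal{H}^m$-measure shows that its $\pi_V$-image is $\mathcal{H}^m$-null for $\gamma_{n,m}$-a.e.\ $V$, and since $\pi_V\mu_{\mathrm f}$ is the push-forward of a measure absolutely continuous with respect to $\mathcal{H}^m$ it follows that $\pi_V\mu_{\mathrm f}\perp\mathcal{H}^m$ for $\gamma_{n,m}$-a.e.\ $V$.

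The genuinely new content — and the step where hypothesis \eqref{e:mainthm1.1} is indispensable — is the part $\mu_0$, which need not be $\sigma$-finite with respect to $\mathcal{H}^m$, so that the classical theorem does not apply directly to it. The guiding principle is that \eqref{e:mainthm1.1}, the atomicity of the $(n-m)$-plane slices, plays for measures exactly the structural role that finiteness of $\mathcal{H}^m$-measure plays for sets in the classical statement. One then sets up a dimension induction, with Besicovitch's planar result as base case, in the spirit of White's proof: for $\gamma$-a.e.\ hyperplane direction, the slices of $\mu_0$ by the corresponding parallel hyperplanes should again satisfy the hypotheses of the theorem with $(n,m)$ replaced by $(n-1,m-1)$ — being concentrated on purely $(m-1)$-unrectifiable sets and still having purely atomic slices of the appropriate codimension — so that the inductive hypothesis applies inside each hyperplane $\cong\mathbb{R}^{n-1}$, and the sliced conclusions are reassembled, through the disintegration $\mu_0=\mu_{0,V,y}\otimes\pi_V\mu_0$ and a Fubini argument over $\mathrm{Gr}(n,m)$, into the statement $\pi_V\mu_0\perp\mathcal{H}^m$. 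For the injectivity assertion in \eqref{e:mainthm1.2}, the plan is to note that, under \eqref{e:mainthm1.1}, $\pi_V$ is injective on a set of full $\mu$-measure if and only if the atomic slice $\mu_{V,y}$ is a single Dirac mass for $\pi_V\mu$-a.e.\ $y$; a $\gamma_{n,m}$-positive family of $V$ for which this fails would, after a Borel selection of a ``second atom'' of $\mu_{V,y}$ and a variational argument in which $V$ is allowed to move, yield an $m$-rectifiable set of positive $\mu$-measure, contradicting pure unrectifiability.

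The main obstacle is precisely the $\{\Theta^{*m}=0\}$ part. One has to run the slicing induction while (i) controlling and discarding the $\gamma_{n,m}$-exceptional directions created at each slicing step; (ii) verifying that both pure $m$-unrectifiability and the atomicity hypothesis \eqref{e:mainthm1.1} descend to $\gamma$-a.e.\ hyperplane slice, the descent of unrectifiability being the classically delicate point; and — most importantly — (iii) upgrading the conclusion from ``the projected set is $\mathcal{H}^m$-null'' to ``the projected measure is singular with respect to $\mathcal{H}^m$'', a strengthening absent from the classical statement, and the one place where the disintegration structure and the atomicity of the slices must be exploited quantitatively rather than merely qualitatively.
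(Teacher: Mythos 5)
Your treatment of the soft pieces is fine (the $\{\Theta^{*m}=\infty\}$ part is carried by an $\mathcal{H}^m$-null set whose Lipschitz image is null, and the $\{0<\Theta^{*m}<\infty\}$ part is carried by a $\sigma$-finite purely unrectifiable set to which the classical Besicovitch--Federer theorem applies piecewise), but the entire difficulty of the theorem is concentrated in the two steps you leave as a ``plan'': the $\{\Theta^{*m}=0\}$ part and the injectivity assertion, and for both the proposed route has genuine gaps. The dimension induction à la White does not get off the ground: White's argument (and Besicovitch's base case) is for \emph{sets of finite $\mathcal{H}^m$-measure}, and the planar case $(n,m)=(2,1)$ of the present statement for a general finite Borel measure with atomic vertical slices is itself new, so the base case is missing/circular. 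Moreover the descent of the hypotheses under hyperplane slicing is exactly the problematic point: hypothesis \eqref{e:mainthm1.1} concerns a $\gamma_{n,n-m}$-\emph{full} family of $(n-m)$-planes, whereas the $(n-m)$-planes contained in a fixed hyperplane form a $\gamma$-null family, so atomicity of the slices of the conditional measures $\mu_{0,e,t}$ does not follow; and ``purely $(m-1)$-unrectifiable for a.e.\ conditional measure'' is likewise unproved. Finally, your item (iii) --- upgrading ``null projected set'' to ``singular projected measure'' --- is the crux of the whole paper and is left entirely open in the proposal. The paper does something structurally different: it disintegrates $\mu$ near each point $x$ along cones via the curvilinear radial projections of Proposition~\ref{p:equivcone}, compares the resulting measures $\mathring{\mu}_{V,x}$ with the fiber measures through the Jacobian relation \eqref{e:intrjac}, reduces Besicovitch's three alternatives to the single vanishing condition \eqref{e:singlealt} (Lemma~\ref{l:keyalternative}, which uses the density lemma for outer measures and the finiteness of $\mu_{V,x}$ coming from Proposition~\ref{p:finite}), and then kills purely unrectifiable \emph{graphs} by Lemma~\ref{l:graphical} together with a measurable selection of atoms (Theorem~\ref{t:final}). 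No slicing induction occurs anywhere.

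Two further points. First, the injectivity claim: your sketch (``Borel selection of a second atom plus a variational argument producing a rectifiable set'') is not the mechanism, and it is unclear it can be made to work, since failure of almost-sure injectivity does not visibly produce rectifiable structure. The paper's proof (Theorem~\ref{p:finite1}) is a direct density contradiction: if $(\pi_V\mu)^a=0$ then $\pi_V\mu$ has infinite $m$-density at $\pi_V\mu$-a.e.\ point, while two distinct points $x_1\neq x_2$ on the same fiber, each satisfying the truncated bound \eqref{e:locweak135}, would force that density to be finite. Note also that almost-sure injectivity is \emph{not} additive over your density decomposition (two pieces may collide on the same fiber), so even granting the three sub-arguments you could not simply reassemble them. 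Second, in the ``only if'' direction, extracting a nonzero $\mathcal{H}^m$-absolutely continuous rectifiable submeasure from ``$\mu$ is not concentrated on a purely unrectifiable set'' is not a soft consequence of \eqref{e:mainthm1.1}: a measure may charge no rectifiable set and still fail to be concentrated on a purely unrectifiable set (Lebesgue measure in the plane for $m=1$ is the model case), and ruling this out under \eqref{e:mainthm1.1} is precisely the content of the paper's Theorem~\ref{t:final}, i.e., the hard theorem again.
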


Intuitively, the theorem asserts that measures supported on purely $m$-unrectifiable sets are graphical with respect to almost every $m$-plane $V$, with respect to functions $f_V \colon B_V \to V^\bot$ defined on suitable $\mathcal{H}^m$-negligible sets $B_V \subset V$. 

 A natural class of a measures satisfying hypothesis \eqref{e:mainthm1.1} is obtained by restricting the $m$-dimensional Hausdorff measure to sets $E \subset \mathbb{R}^n$ with $\mathcal{H}^m(E) < \infty$.
In this case, the first condition in \eqref{e:mainthm1.2} coincides with the classical Besicovitch-Federer characterization of pure unrectifiability, thus, the theorem recovers the classical criterion while yielding a genuinely new statement through the additional almost-everywhere injectivity of the projections~$\pi_V$.
The scope of hypothesis \eqref{e:mainthm1.1}, however, extends far beyond this setting.

Under the hypothesis \eqref{e:mainthm1.1}, no assumption is made on the projected measure $\pi_V\mu$ itself; only the atomicity of the conditional measures $\mu_{V,y}$ on the absolutely continuous component of $\pi_V\mu$ is prescribed, and that component may in fact vanish.  
The injectivity requirement in \eqref{e:mainthm1.2} corresponds to the \emph{probabilistic injective projection property} commonly used in dimension theory and dynamical systems to control overlaps in parametrized families of projections.

Theorem \ref{t:mainthm1} also has an immediate corollary for sets.

\begin{corollary}
\label{c:mainthm1}
Let $E \subset \mathbb{R}^n$ be a purely $m$-unrectifiable Borel set which intersects a typical $(n\!-\!m)$-plane in countably many points.  
Then, for $\gamma_{n,m}$-a.e.\ $V \in \emph{Gr}(n,m)$,
\begin{equation}
\label{e:mainthm1.2-c}
\pi_V \mu \perp \mathcal{H}^m
\quad \text{and} \quad
\pi_V\colon E \to V
\text{ is injective on a set of full $\mu$-measure,}
\end{equation}
for every finite Borel measure $\mu$ supported on $E$.
\end{corollary}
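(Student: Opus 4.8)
The plan is to derive Corollary~\ref{c:mainthm1} directly from Theorem~\ref{t:mainthm1} by a short reduction. Fix a purely $m$-unrectifiable Borel set $E \subset \mathbb{R}^n$ which meets a typical $(n\!-\!m)$-plane in countably many points, and fix a finite Borel measure $\mu$ concentrated on $E$. The goal is to check that the pair $(\mu, \Sigma)$ with $\Sigma = \mathrm{Gr}(n,m)$ satisfies hypothesis~\eqref{e:mainthm1.1}, after which the ``if'' direction of Theorem~\ref{t:mainthm1} yields~\eqref{e:mainthm1.2-c}.

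First I would make precise the notion of a \emph{typical} $(n\!-\!m)$-plane intersecting $E$ in countably many points and relate it to the disintegration of $\mu$. A generic $(n\!-\!m)$-plane $W$ can be written as $W = \pi_V^{-1}(y)$ for $V \in \mathrm{Gr}(n,m)$ and $y \in V$, and the natural measure on the affine Grassmannian disintegrates as $\gamma_{n,m} \otimes \mathcal{H}^m$ in these coordinates (up to the usual normalization). Thus the hypothesis on $E$ says that for $\gamma_{n,m}$-a.e.\ $V$ and $\mathcal{H}^m$-a.e.\ $y \in V$, the slice $E \cap \pi_V^{-1}(y)$ is countable. Since $\mu$ is concentrated on $E$, the fiber measure $\mu_{V,y}$ is concentrated on $E \cap \pi_V^{-1}(y)$ for $\pi_V\mu$-a.e.\ $y$, hence for $\mathcal{H}^m$-a.e.\ $y$ on the absolutely continuous part of $\pi_V\mu$; a probability measure concentrated on a countable set is purely atomic, so~\eqref{e:mainthm1.1} holds. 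The only point needing care here is the measurability/coarea bookkeeping that lets one pass from ``$E$ meets a typical affine $(n\!-\!m)$-plane in countably many points'' to the statement quantified over $(V,y)$ with $V$ ranging in the linear Grassmannian and $y$ in $V$; this is a standard application of the integralgeometric disintegration of $\mathcal{H}^{(n-m)}$-type measures on the affine Grassmannian, combined with the fact that $\mu_{V,y}$ is, by definition of the disintegration, concentrated on $\pi_V^{-1}(y) \cap \mathrm{supp}\,\mu \subset \pi_V^{-1}(y) \cap \overline{E}$ — one must be slightly careful to argue concentration on $E$ itself rather than its closure, which follows because $\mu(\mathbb{R}^n \setminus E) = 0$ and disintegration is compatible with restriction to $\mu$-full sets.

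With~\eqref{e:mainthm1.1} verified, Theorem~\ref{t:mainthm1} applied with $\Sigma = \mathrm{Gr}(n,m)$ gives the equivalence between~\eqref{e:mainthm1.2} and $\mu$ being concentrated on a purely $m$-unrectifiable set. Since $\mu$ is by assumption concentrated on $E$, which is purely $m$-unrectifiable, the right-hand side of the equivalence holds, and therefore~\eqref{e:mainthm1.2} holds for $\gamma_{n,m}$-a.e.\ $V$, which is exactly~\eqref{e:mainthm1.2-c}. Finally, since the verification of~\eqref{e:mainthm1.1} used only that $\mu$ is concentrated on $E$ and nothing else about $\mu$, the conclusion holds for \emph{every} finite Borel measure supported on $E$, as claimed; note the null set of exceptional $V$ may depend on $\mu$, which is consistent with the statement. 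I do not expect any serious obstacle: the entire content is the reduction to Theorem~\ref{t:mainthm1}, and the only mildly technical step is the integral-geometric identification of ``typical affine plane'' slices with the fibers of the disintegration, which is routine given the framework already set up in the paper.
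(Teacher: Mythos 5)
Your reduction is correct and is exactly the argument the paper intends: the corollary is stated there without proof as an ``immediate'' consequence of Theorem~\ref{t:mainthm1}, and the two technical points you isolate (identifying the fibers $\pi_V^{-1}(y)$, $y\in V$, with the affine $(n-m)$-planes via the disintegration of the natural measure on the affine Grassmannian, and passing from the $\pi_V\mu$-a.e.\ concentration of $\mu_{V,y}$ on the countable slice $E\cap\pi_V^{-1}(y)$ to the $\mathcal{H}^m$-a.e.\ atomicity required in \eqref{e:mainthm1.1}, which is only relevant on the set where the density of $(\pi_V\mu)^a$ is positive, where $\pi_V\mu$-null implies $\mathcal{H}^m$-null) are precisely the details that need filling in. Your reading of the quantifiers --- that the exceptional $\gamma_{n,m}$-null set of planes may depend on $\mu$ --- is the only tenable one, since the uniform version would force $\pi_V$ to be injective on all of $E$ (test it with $\mu=\delta_a+\delta_b$ for distinct $a,b\in E$ on a common fiber), which fails in general.
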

This result is sharp: Mattila constructed a purely $1$-unrectifiable set 
$E \subset \mathbb{R}^2$ with finite and positive integral geometric measure. 
Hence, condition~\eqref{e:mainthm1.2-c} cannot, in general, be strengthened to a null projection property
in the sense of the classical Besicovitch--Federer projection theorem for sets of finite $\mathcal{H}^m$-measure.

\medskip

To clarify the content of Theorem \ref{t:mainthm1}, we decompose it into two independent statements, reflecting the structure of the argument.  
The main theorem will then follow directly from their combination. Here, and for the rest of the paper, $(\pi_V\mu)^a$ denotes the absolutely continuous part of $\pi_V\mu$ with respect to $\mathcal{H}^m$.

\begin{theorem}[Injectivity of projections]
\label{t:mainthm2}
Let $\mu$ be a finite Borel measure on $\mathbb{R}^n$ and let $\Sigma \subset \emph{Gr}(n,m)$ be a Borel set.  
Then, one of the following two alternatives holds:
\begin{align}
\label{e:alter1}
    \text{$\pi_V\colon \mathbb{R}^n \to V$ is injective } &\text{on a set of full $\mu$-measure for $\gamma_{n,m}$-a.e.\ $V \in \Sigma$} \\
    \label{e:alter2}
     &\! \! \! \! \! \! \! \!\text{$\gamma_{n,m}\bigl(\{V \in \Sigma \ | \  (\pi_V\mu)^a \neq 0\}\bigr) > 0$.}
\end{align}
\end{theorem}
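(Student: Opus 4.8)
We argue by contraposition: supposing that \eqref{e:alter1} fails, we establish \eqref{e:alter2}. Set
\[
B:=\bigl\{V\in\Sigma \ :\ \pi_V\colon\mathbb{R}^n\to V\ \text{is not injective on any set of full }\mu\text{-measure}\bigr\},
\]
so that $\gamma_{n,m}(B)>0$ by assumption. Using $\mu=\mu_{V,y}\otimes\pi_V\mu$, the map $\pi_V$ is injective on a set of full $\mu$-measure if and only if $\mu_{V,y}$ is a Dirac mass for $\pi_V\mu$-a.e.\ $y$: one implication is immediate, and the converse follows by selecting the atom measurably and restricting $\mu$ to the resulting graph. Hence for $V\in B$ the set $G_V:=\{y\ :\ \mu_{V,y}\ \text{is not a Dirac mass}\}$ has $\pi_V\mu(G_V)>0$; and since for $y\in G_V$ the support of $\mu_{V,y}$ contains at least two points, the fibre $\pi_V^{-1}(y)$ decomposes into two disjoint Borel pieces $C_y^1,C_y^2$, chosen to depend measurably on $(V,y)$, each carrying positive $\mu_{V,y}$-mass. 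The sets $E_i^V:=\bigcup_{y\in G_V}C_y^i$ are then disjoint, and each $\pi_V(\mu\restr E_i^V)$ is absolutely continuous with respect to $\pi_V\mu$ with density positive throughout $G_V$; in particular $\pi_V(\mu\restr E_1^V)$ and $\pi_V(\mu\restr E_2^V)$ are not mutually singular. Approximating the $E_i^V$ by members of a fixed countable algebra generating the Borel $\sigma$-algebra of $\mathbb{R}^n$ and pigeonholing over $B$, we obtain disjoint Borel sets $E_1,E_2\subset\mathbb{R}^n$ and a Borel set $B_0\subset B$ with $\gamma_{n,m}(B_0)>0$ such that, with $\mu_i:=\mu\restr E_i$, we have $\mu_1\perp\mu_2$ and $\pi_V\mu_1\not\perp\pi_V\mu_2$ for every $V\in B_0$; restricting to large balls and passing to a positive-measure subset of $B_0$, we may further assume $\mu_1$ and $\mu_2$ compactly supported.

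The theorem is then reduced to the following assertion: if $\mu_1,\mu_2$ are finite compactly supported measures with $\mu_1\perp\mu_2$ and with $\pi_V\mu_1\not\perp\pi_V\mu_2$ for all $V$ in a set of positive $\gamma_{n,m}$-measure, then $(\pi_V(\mu_1+\mu_2))^a\neq0$ on a set of positive $\gamma_{n,m}$-measure, which is precisely \eqref{e:alter2}. Suppose not; then $\pi_V\mu_1\perp\HH^m$ for $\gamma_{n,m}$-a.e.\ $V$. For each $V\in B_0$ put $\lambda_V:=\pi_V\mu_1\wedge\pi_V\mu_2\neq0$ and lift it to $\kappa_i^V\leq\mu_i$ with $\pi_V\kappa_i^V=\lambda_V$; then $\pi_V(\kappa_1^V-\kappa_2^V)=0$, so by Paley--Wiener the entire function $\widehat{\kappa_1^V-\kappa_2^V}$ vanishes on the $m$-plane $V$. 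Since $\gamma_{n,m}(B_0)>0$, the set $\bigcup_{V\in B_0}V$ has positive Lebesgue measure in $\mathbb{R}^n$: it is the cone over the image of $\{(V,v)\ :\ V\in B_0,\ v\in V,\ |v|=1\}$ under $(V,v)\mapsto v$, a smooth surjection of the unit sphere bundle of the tautological bundle over $\mathrm{Gr}(n,m)$ onto $S^{n-1}$, so this image has positive $\HH^{n-1}$-measure on $S^{n-1}$. An entire function on $\mathbb{C}^n$ that vanishes on a subset of $\mathbb{R}^n$ of positive Lebesgue measure vanishes identically; combining this with the rigidity of the singular measures $\lambda_V$ (each carried by an $\HH^m$-null subset of $V$) across the family $\{V\in B_0\}$ forces $\kappa_1^V=\kappa_2^V$ for such $V$. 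But $\kappa_i^V\neq0$ and $\mu_1\perp\mu_2$, a contradiction. Hence $(\pi_V\mu_1)^a\neq0$ on a set of positive $\gamma_{n,m}$-measure, which yields \eqref{e:alter2} and finishes the proof.

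The real difficulty lies entirely in the last step, and precisely in the fact that the lifts $\kappa_i^V$ truly depend on $V$ --- this dependence is what allows \eqref{e:alter1} to fail for genuine measures (two crossing segments being the model example), so a rigidity statement applied to one fixed signed measure cannot suffice. The work is to make the Fourier rigidity quantitative and uniform over $B_0$: one has to compare the Radon--Nikodym densities $d\kappa_i^V/d\mu_i$ across the positive-measure family of planes and control how much the common zero sets of the associated Fourier transforms may shrink as $V$ varies, using that these zero sets collectively sweep out a set of positive $n$-dimensional Lebesgue measure. A variant better suited to the slicing viewpoint of the paper would instead argue through Marstrand-type energy bounds for $\pi_V\mu_i$, or through the behaviour of the sliced measures $\mu_{V,y}$ as $V$ rotates; in any formulation, turning a persistent overlap of projections into the appearance of an absolutely continuous component is the heart of Theorem~\ref{t:mainthm2}.
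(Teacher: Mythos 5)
There is a genuine gap, and it sits exactly where you locate it yourself: the Fourier step. The fact that $\pi_V(\kappa_1^V-\kappa_2^V)=0$ only tells you that the Fourier transform of the \emph{single} signed measure $\kappa_1^V-\kappa_2^V$ vanishes on the \emph{single} plane $V$ (an $\mathcal{H}^m$-null subset of $\mathbb{R}^n$, so real-analytic rigidity gives nothing). Because the lifts $\kappa_i^V$ genuinely vary with $V$, there is no one entire function that vanishes on $\bigcup_{V\in B_0}V$, and the sentence ``combining this with the rigidity of the singular measures $\lambda_V$ \dots forces $\kappa_1^V=\kappa_2^V$'' is an assertion, not an argument. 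Your closing paragraph concedes that making this ``quantitative and uniform over $B_0$'' is the actual content of the theorem, so the proposal reduces the statement to an unproved claim of comparable difficulty. (The earlier reduction also leaves a nontrivial measurable-selection/pigeonholing step to produce $V$-independent sets $E_1,E_2$, but that part looks repairable; the last step does not, at least not along these lines.)

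For comparison, the paper's proof takes a completely different, non-Fourier route. It first records that a.e.\ injectivity of $\pi_V$ is equivalent to $\mathrm{supp}\,\mu_{V,y}$ being a singleton for $\pi_V\mu$-a.e.\ $y$ (your first observation matches this). The engine is then Proposition~\ref{p:disretr1}, built from the transversality/cone machinery of Propositions~\ref{p:equivcone} and~\ref{p:fcv1}: for a.e.\ $V$ and $\mu$-a.e.\ $x$ one has
\[
\sup_{r>0} r^{-m}\,\mu\bigl(B\cap \pi_V^{-1}(B^m_r(\pi_V(x)))\bigr)<\infty \quad\text{for every Borel }B\Subset\mathbb{R}^n\setminus\{x\}.
\]
If $(\pi_V\mu)^a=0$, then $\limsup_{r\to0}r^{-m}\pi_V\mu(B^m_r(y))=\infty$ for $\pi_V\mu$-a.e.\ $y$; but if two distinct points $x_1\neq x_2$ of a full-measure set lay in the same fibre $\pi_V^{-1}(y_0)$, covering $\pi_V^{-1}(B^m_r(y_0))$ by the two tubes punctured at $x_1$ and $x_2$ would bound that density by a finite quantity --- a contradiction. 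If you want to salvage your strategy, you would need to replace the entire-function rigidity by a density argument of this kind (or by the energy/slicing estimates you mention at the end); as written, the heart of the theorem is missing.
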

\begin{theorem}[Besicovitch--Federer projection theorem for measures]
\label{t:mainthm3}
Under the assumptions of Theorem \ref{t:mainthm1}, the measure $\mu$ admits the decomposition
\[
  \mu = \mu_r \oplus \mu_u,
\]
where $\mu_r$ is $m$-rectifiable, and $\mu_u$ satisfies
\begin{equation}
\label{e:mainthm2.1}
\pi_V\mu \perp \mathcal{H}^m
\quad \text{for $\gamma_{n,m}$-a.e.\ $V \in \Sigma$}.
\end{equation}
\end{theorem}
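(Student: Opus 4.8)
The plan is to prove Theorem~\ref{t:mainthm3} by a decomposition-plus-contradiction argument, exploiting Theorem~\ref{t:mainthm2} as the engine that converts failure of singularity into injectivity, and then using injectivity together with hypothesis~\eqref{e:mainthm1.1} to manufacture a rectifiable piece.

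First I would set up the decomposition. Consider the family of Borel sets $B \subset \mathbb{R}^n$ on which $\mu\restr B$ is $m$-rectifiable; by a standard exhaustion argument (taking a sequence realizing the supremum of $\mu(B)$ over such $B$ and using that a countable union of $m$-rectifiable sets is $m$-rectifiable) there is a maximal such set, giving $\mu = \mu_r \oplus \mu_u$ with $\mu_r := \mu\restr B$ being $m$-rectifiable and $\mu_u := \mu\restr(\mathbb{R}^n \setminus B)$ carrying no nonzero $m$-rectifiable part. It remains to show $\pi_V\mu_u \perp \mathcal{H}^m$ for $\gamma_{n,m}$-a.e.\ $V \in \Sigma$. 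Since $\mu_u = \mu\restr(\mathbb{R}^n\setminus B)$, it still satisfies hypothesis~\eqref{e:mainthm1.1} (conditional measures of a restriction are restrictions of the conditional measures, hence still purely atomic a.e.), so without loss of generality I may rename $\mu_u$ as $\mu$ and assume $\mu$ has no nonzero $m$-rectifiable part, aiming to prove $\mu$ is purely $m$-unrectifiable (i.e.\ concentrated on a purely unrectifiable set — which for a measure with no rectifiable part is automatic) and deduce \eqref{e:mainthm2.1}.

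Now suppose for contradiction that $\gamma_{n,m}(\{V \in \Sigma \mid (\pi_V\mu)^a \neq 0\}) > 0$; by Theorem~\ref{t:mainthm2} this forces \eqref{e:alter1}, i.e.\ $\pi_V$ is $\mu$-a.e.\ injective for $\gamma_{n,m}$-a.e.\ $V\in\Sigma$. Fix such a $V$ with in addition $(\pi_V\mu)^a \neq 0$ and with \eqref{e:mainthm1.1} holding for $V$ (a.e.\ $V$ in a positive-measure set satisfies all three). Write $\pi_V\mu = (\pi_V\mu)^a + (\pi_V\mu)^s$ and restrict attention to the set $A \subset V$ of positive $\mathcal{H}^m$-measure carrying $(\pi_V\mu)^a$. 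Over $A$, the disintegration gives $\mu\restr\pi_V^{-1}(A) = \mu_{V,y}\otimes (\pi_V\mu)^a$ where $\mu_{V,y}$ is purely atomic for $\mathcal{H}^m$-a.e.\ $y\in A$; combined with $\mu$-a.e.\ injectivity of $\pi_V$, each fiber $\pi_V^{-1}(y)$ carries exactly one atom of $\mu$ for a.e.\ such $y$, so $\mu\restr\pi_V^{-1}(A)$ is concentrated on a graph $\{(y, f_V(y)) : y \in A_0\}$ for a Borel $A_0 \subset A$ with $\mathcal{H}^m(A_0) > 0$ and a Borel $f_V\colon A_0 \to V^\bot$. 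A graph of a Borel function over a set of positive $\mathcal{H}^m$-measure contains, by Lusin's theorem applied to $f_V$, a further subset that is a Lipschitz graph of positive measure, hence $m$-rectifiable with positive $\mu$-measure — contradicting that $\mu$ has no nonzero $m$-rectifiable part. Therefore $\gamma_{n,m}(\{V\in\Sigma \mid (\pi_V\mu)^a\neq 0\}) = 0$, which is exactly \eqref{e:mainthm2.1}.

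The main obstacle I anticipate is the step extracting a \emph{positive-measure Lipschitz graph} from the Borel graph over $A_0$: one must be careful that the atom-selecting function $f_V$ is genuinely Borel measurable (this requires a measurable selection theorem, using that the atomic part of $\mu_{V,y}$ varies measurably in $y$ and that $\mu$-a.e.\ injectivity pins down a single atom), and then Lusin's theorem only gives continuity on a large compact set, from which one upgrades to a Lipschitz graph via a Whitney-type or Kirszbraun extension after a further restriction to a density/regularity set. A secondary technical point is verifying that restriction of $\mu$ to a Borel set is compatible with disintegration in the claimed way — namely that $(\mu\restr C)_{V,y}$ is, up to normalization, $\mu_{V,y}\restr C$ — which is standard but should be stated. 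Once these measurability and regularity issues are handled, the contradiction closes cleanly and Theorem~\ref{t:mainthm3} follows; Theorem~\ref{t:mainthm1} is then immediate by combining Theorems~\ref{t:mainthm2} and~\ref{t:mainthm3}.
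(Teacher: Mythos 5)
There are two genuine gaps, and the second one is fatal because it assumes away exactly the difficulty that the paper's machinery is built to overcome.

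First, the step ``suppose $\gamma_{n,m}(\{V \in \Sigma \mid (\pi_V\mu)^a \neq 0\}) > 0$; by Theorem~\ref{t:mainthm2} this forces \eqref{e:alter1}'' misreads Theorem~\ref{t:mainthm2}. That theorem is the disjunction ``\eqref{e:alter1} or \eqref{e:alter2}'', whose content (as the proof of Theorem~\ref{p:finite1} makes explicit) is the implication $\neg\eqref{e:alter2} \Rightarrow \eqref{e:alter1}$. Once you assume \eqref{e:alter2}, the disjunction is satisfied and gives you no information about injectivity; you are affirming a disjunct. (This error happens to be non-fatal on its own: atomicity of $\mu_{V,y}$ plus a measurable selection already produces a Borel graph charged by $\mu$ without any injectivity, which is exactly what the paper does in the proof of Theorem~\ref{t:final}. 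But as written the logic is invalid.)

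Second, and decisively: the claim that a Borel graph $\{(y,f_V(y)) : y \in A_0\}$ over a set $A_0$ of positive $\mathcal{H}^m$-measure ``contains, by Lusin's theorem, a further subset that is a Lipschitz graph of positive measure'' is false. Lusin's theorem only yields continuity of $f_V$ on a large compact set, and a continuous graph over a set of positive measure need not contain any rectifiable subset of positive measure at all: if $f$ is continuous and nowhere approximately differentiable, its graph is purely $m$-unrectifiable, and the pushforward of $\mathcal{H}^m\restr A_0$ under $y \mapsto (y,f(y))$ is a measure with absolutely continuous projection and single-atom slices that charges no Lipschitz graph. There is no Whitney/Kirszbraun upgrade from continuity to Lipschitz regularity on a positive-measure subset for an arbitrary Borel or continuous function; such ``Lusin-type Lipschitz approximation'' requires approximate differentiability, which you have no way to produce here. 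Ruling out precisely this scenario --- a purely unrectifiable measurable graph carrying positive $\mu$-mass while $\pi_V\mu$ has a nontrivial absolutely continuous part for a positive measure set of $V$ --- is the entire content of the paper's argument: it requires the conical disintegration of Propositions~\ref{p:fcv1} and~\ref{p:disretr1}, the single density alternative \eqref{e:singlealt} of Lemma~\ref{l:keyalternative} (valid only for \emph{typical} directions $V$, which is where the positive measure of the exceptional set of planes enters), and the covering argument of Lemma~\ref{l:graphical} showing that such a graph must be $\mu$-null. Your proposal replaces all of this with a one-line appeal to Lusin, so the contradiction never closes. The initial saturation decomposition $\mu = \mu_r \oplus \mu_u$ does agree with the paper's, but note also that a measure with no nonzero $m$-rectifiable part need not be concentrated on a purely unrectifiable set (it may be singular with respect to $\mathcal{H}^m$ on a rectifiable set), a case the paper handles separately at the end of the proof of Theorem~\ref{t:final2}.
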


The first theorem provides the first explicit connection between the absolute continuity of projected measures and the geometric structure of their slices.  
Together, Theorems \ref{t:mainthm2} and \ref{t:mainthm3} show that the equivalence between almost-everywhere injectivity and singularity of projections is a natural measure-theoretic extension of the classical Besicovitch--Federer principle for sets. Let us discuss separately the results contained in Theorems \ref{t:mainthm2} and \ref{t:mainthm3}.

\subsection{Injectivity of projections}

While the study of dimension of sets under orthogonal projections has been extensively developed since the seminal works of Marstrand \cite{mar} (see, \cite{mat7} and the references therein), another closely related question concerns the injectivity of the projections $\pi_V$ on a given set $E$, at least for a typical subspace $V$. This problem is of particular interest in dimension theory, since whenever this injectivity property holds, the map $\pi_V$ provides a topological embedding of $E$ into the $m$-dimensional linear subspace $V$. 

It is known that if $K \subset \mathbb{R}^n$ is compact and $2\dim_B K < m$, where $\dim_B$ denotes the upper box (Minkowski) dimension, then for a typical $m$-plane $V \subset \mathbb{R}^n$, the orthogonal projection $\pi_V$ is injective on $K$. This statement, often referred to as the \emph{Mañé projection theorem}, was first established by Mañé~\cite{Mn81} for topologically generic projections, and later extended to hold for almost every projection in~\cite{SYC91, Rob11}. 
In the smooth category, the corresponding statement is given by the \emph{Whitney embedding theorem}~\cite{Whi36}, asserting that every $m$-dimensional $C^r$-manifold can be generically embedded into $\mathbb{R}^{2m+1}$ by a $C^r$-map ($r \geq 1$ finite or infinite).

It is known that, in general, the bound $2\dim_B K < m$ appearing in the Mañé projection theorem is optimal and cannot be improved (see~\cite[Example~V.3]{HW41}), and moreover, the upper box dimension $\dim_B$ cannot be replaced by the Hausdorff dimension $\dim_{\mathcal{H}}$ 

The situation changes substantially when one replaces strict injectivity of $\pi_V$ by \emph{almost sure injectivity}, that is, injectivity on a set of full $\mu$-measure. The strongest result known in this direction is due to Barański-Gutman-Śpiewak~\cite{bara}. They proved that if $E \subset \mathbb{R}^n$ is a Borel set equipped with a Borel $\sigma$-finite measure $\mu$, and if $\mathcal{H}^m(E)=0$, then $\pi_V$ is injective on a Borel subset $E_V \subset X$ of full $\mu$-measure, for almost every $m$-plane $V$. 
Thus, the minimal embedding dimension required for a typical projection can effectively be reduced by a factor of two.

The border case $\dim_{\mathcal{H}} E = m$ generally exhibits different behavior. For instance, when $\mu$ coincides with the restriction of $\mathcal{H}^1$ to a segment in $\mathbb{R}^2$, the projection onto a typical line is injective, whereas for $\mu = \mathcal{H}^1 \restr \mathbb{S}^1$, clearly no projection onto a line can be injective on a set of full $\mu$-measure. Since in both examples the measure $\mu$ is rectifiable, it is natural to investigate the situation in the purely unrectifiable case.

In this context, Theorem \ref{t:mainthm2} provides a novel result. Roughly speaking, the requirements $\mathcal{H}^m(E) = 0$ in \cite{bara} can actually be relaxed to the \emph{singular projection property} which is directly formulated in terms of the measure $\mu$:
\begin{equation}
\label{e:spp}
    \pi_V \mu \, \bot \, \mathcal{H}^m, \quad \text{for almost every $V \in \mathrm{Gr}(n,m)$}.
\end{equation}  
Indeed, for instance, if $E$ is a purely $m$-unrectifiable set with $\mathcal{H}^m(E) < \infty$, the classical Besicovitch-Federer projection theorem implies that condition~\eqref{e:alter2} cannot hold for $\mu = \mathcal{H}^m \restr E$. Hence condition~\eqref{e:alter1} must be satisfied. Furthermore, we note that property~\eqref{e:spp} applies even to cases lying beyond the class of $\sigma$-finite $\mathcal{H}^m$-measures.

Although this topic lies beyond the scope of the present work, it is worth emphasizing that the result established in Theorem~\ref{t:mainthm2} is closely related to a long-standing open problem proposed by Marstrand~\cite[Problem 12]{mat5} concerning the behavior of \emph{radial projections} of purely unrectifiable sets with finite Hausdorff measure. A detailed analysis of this connection, as well as its implications for Marstrand’s problem, will be the subject of forthcoming work.

\subsection{Besicovitch-Federer projection theorem for measures}
To the best of our knowledge, the first Besicovitch--Federer type result for sets beyond the $\sigma$-finite case was established in \cite{tas1}. More precisely, it was proved that if a set $E \subset \mathbb{R}^n$ intersects a typical $(n-m)$-plane in only finitely many points, then $E$ admits a decomposition 
\[
E = R \cup (E \setminus R),
\]
where $R \subset \mathbb{R}^n$ is countably $m$-rectifiable and the remainder $E \setminus R$ is purely $m$-unrectifiable and satisfies
\begin{equation}
\label{e:singproigm}
\pi_V \big(\mu \restr (E \setminus R)\big) \perp \mathcal{H}^m 
\quad \text{for almost every } V \in \mathrm{Gr}(n,m),
\end{equation}
whenever $\mu$ is a finite Borel measure on $\mathbb{R}^n$.

In this case, the atoms of $(\mu \restr (E \setminus R))_{V,y}$ remain contained in $E$, allowing one to invoke Besicovitch’s three key alternatives~\cite[Theorem~3.3.4]{fed1}. Indeed, these alternatives assert that if the asymptotic behavior of certain maximal conical $m$-densities of $\mu$ around an $m$-plane $V$ at a point $x$ is finite and nonzero, then the following geometric condition must hold:
\begin{equation}
\label{e:alternative(iii)}
x\text{ is an accumulation point for the set }E \setminus \{x\} \cap (V+x).
\end{equation}
The (non-)compatibility between condition~\eqref{e:alternative(iii)} and the finite slicing structure of $E$ was crucial to the argument developed in~\cite{tas1}.

In extending this result under the assumptions of Theorem~\ref{t:mainthm3}, a central difficulty arises from the lack of direct dimensional estimates, and in particular from the absence of a supporting set $E$ that behaves well with respect to slicing. To the best of our knowledge, no potential-theoretic argument in the literature ensures that a measure $\mu$ satisfying~\eqref{e:mainthm1.1} also verifies
\begin{equation}
    \dim_{\mathcal{H}}(\mu)
    = \inf \bigl\{\dim_{\mathcal{H}}(E) : E \subset \mathbb{R}^n, \ \mu(E) = 0 \bigr\}
    \le s,
\end{equation}
for some parameter $m \le s < n$.

In the present setting, even if one could construct a set $E$ whose typical slice $E \cap \pi_V^{-1}(y)$ contains all atoms of $\mu_{V,y}$, the purely atomic nature of $\mu_{V,y}$, prevents one from excluding condition~\eqref{e:alternative(iii)}. As a result, the structure of Federer’s classical argument in~\cite[Section 3.3]{fed1} breaks down.

In connection with Marstrand’s open problem, Mattila observed in~\cite[Subsection 4.3]{mat4} that even for sets of finite Hausdorff measure, it is not known whether condition~\eqref{e:alternative(iii)} is genuinely necessary. Indeed, it is not difficult to see that if~\cite[Problem 12]{mat5} admits an affirmative answer, then alternative~\eqref{e:alternative(iii)} cannot occur, so that the trichotomy reduces to the first two cases. In this sense, our proof of Theorem~\ref{t:mainthm3} shows that condition~\eqref{e:alternative(iii)} may be dispensed with entirely, even for general finite Borel measures.

 \subsection{Rectifiability via slicing.}Theorem~\ref{t:mainthm3} immediately yields the following rectifiability criterion for general measures via slicing.

\begin{corollary}[Rectifiability via slicing]
\label{c:rectcri}
    Let $\mu$ be a finite Borel measure in $\mathbb{R}^n$ and let $\Sigma \subset \emph{Gr}(n,n)$ be a Borel set. Assume that $\gamma_{n,m}(\Sigma)>0$ and that for $\gamma_{n,m}$-a.e. $V \in \Sigma$ it holds
    \begin{align}
    \label{e:rectcri1}
    \pi_V \mu \ll \mathcal{H}^m, \quad \mu_{V,y} \text{ is purely atomic for $\mathcal{H}^m$-a.e. $y \in V$}. 
    \end{align}
    Then $\mu$ is a $m$-rectifiable measure.
\end{corollary}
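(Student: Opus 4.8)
The plan is to deduce Corollary~\ref{c:rectcri} directly from Theorem~\ref{t:mainthm3}. First I would check that the hypotheses match: the second condition in \eqref{e:rectcri1} is precisely assumption \eqref{e:mainthm1.1} of Theorem~\ref{t:mainthm1} relative to the Borel set $\Sigma$, and $\mu$ is a finite Borel measure by assumption, so Theorem~\ref{t:mainthm3} applies. It yields an orthogonal decomposition $\mu = \mu_r \oplus \mu_u$ with $\mu_r$ an $m$-rectifiable measure and $\pi_V\mu_u \perp \mathcal{H}^m$ for $\gamma_{n,m}$-a.e.\ $V \in \Sigma$.

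Next I would exploit that the decomposition is orthogonal, so $\mu_u = \mu \restr B$ for some Borel set $B \subset \mathbb{R}^n$; in particular $\mu_u \le \mu$ as measures, whence $\pi_V\mu_u \ll \pi_V\mu$ for every $V \in \mathrm{Gr}(n,m)$ (if $\pi_V\mu(A)=0$ then $\mu(\pi_V^{-1}A)=0$, so $\mu_u(\pi_V^{-1}A)=0$). Combining this with the first condition in \eqref{e:rectcri1}, namely $\pi_V\mu \ll \mathcal{H}^m$ for $\gamma_{n,m}$-a.e.\ $V \in \Sigma$, I obtain that for $\gamma_{n,m}$-a.e.\ $V \in \Sigma$ the measure $\pi_V\mu_u$ is simultaneously absolutely continuous and singular with respect to $\mathcal{H}^m$, and is therefore the zero measure.

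Finally, since $\gamma_{n,m}(\Sigma) > 0$, the set of $V \in \Sigma$ with $\pi_V\mu_u = 0$ has positive $\gamma_{n,m}$-measure, hence is nonempty; fix one such $V$. As $\pi_V\colon \mathbb{R}^n \to V$ is defined everywhere, pushforward preserves total mass, so $\mu_u(\mathbb{R}^n) = \pi_V\mu_u(V) = 0$, i.e.\ $\mu_u = 0$ and $\mu = \mu_r$ is $m$-rectifiable. There is no real obstacle here beyond the bookkeeping of hypotheses; the only point worth highlighting is that a single ``good'' plane $V$ already forces $\mu_u$ to vanish, which is exactly the role played by the assumption $\gamma_{n,m}(\Sigma) > 0$ (without it, the statement would be vacuous, as $\Sigma$ could be $\gamma_{n,m}$-null).
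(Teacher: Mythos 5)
Your proof is correct. It is exactly the deduction the paper has in mind when it says Corollary~\ref{c:rectcri} follows ``immediately'' from Theorem~\ref{t:mainthm3}: you correctly read \eqref{e:mainthm2.1} as $\pi_V\mu_u \perp \mathcal{H}^m$ (the displayed formula has an evident typo), note that $\mu_u \le \mu$ gives $\pi_V\mu_u \ll \pi_V\mu \ll \mathcal{H}^m$, and conclude $\pi_V\mu_u = 0$ for some $V$, hence $\mu_u = 0$ since pushforward preserves total mass. For comparison, the paper's explicit written proof of the metric-space analogue (Corollary~\ref{c:rectcri1}) takes a slightly different route: it applies Theorem~\ref{t:final} directly to show that every Borel set of positive $\mu$-measure contains a countably $m$-rectifiable piece $R$ with $\mu(R)>0$ and $\mu\restr R \ll \mathcal{H}^m$, and then concludes by an exhaustion/maximality argument over rectifiable sets. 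The two arguments are essentially equivalent in substance (Theorem~\ref{t:mainthm3} is itself deduced from Theorem~\ref{t:final} by the same saturation device), but yours isolates more cleanly where the hypothesis $\pi_V\mu \ll \mathcal{H}^m$ enters, namely only in the final ``absolutely continuous and singular, hence zero'' step, and it makes explicit the role of $\gamma_{n,m}(\Sigma)>0$ in producing a single good plane. No gaps.
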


The result contained in Corollary~\ref{c:rectcri} is closely related to the so-called \emph{rectifiable slices theorem}, originally proved by White for flat chains~\cite{whi1} and independently by Ambrosio and Kirchheim for metric currents~\cite{ak1}. This theorem asserts that a finite-mass flat $m$-chain $T$ is rectifiable if and only if its slices $\langle T, V, y \rangle$ are rectifiable $0$-chains for every projection onto a coordinate $m$-plane $V$ and for $\mathcal{H}^m$-a.e. $y \in V$. The structure of a flat chain or metric current plays a crucial role in both proofs: it allows for the use of the boundary operator, the deformation theorem for flat chains, and, in the case of Ambrosio and Kirchheim, the interpretation of the map $y \mapsto \langle T, V, y \rangle$ as a function of metric bounded variation~\cite[Section~7]{ak1}.

 White's result shows that, when $\mu$ coincides with the mass measure of a finite-mass flat $m$-chain, rectifiability follows from the atomicity of the disintegration of $\mu$ obtained by orthogonal projections onto every coordinate $m$-plane.

Corollary~\ref{c:rectcri} extends this principle to the setting of general Radon measures, under suitable assumptions on their disintegrations. A natural question is whether the assumption $\gamma_{n,m}(\Sigma)>0$ is optimal. In contrast with the case of flat chains, where the underlying geometric structure permits rectifiability of the mass measure to be deduced from finitely many projections, the situation for general measures is considerably more delicate. Section \ref{s:recslice} presents a counterexample: a measure satisfying \eqref{e:rectcri1} for a dense family of projections, yet containing no nontrivial rectifiable part.

\subsection{Transversal family of maps}

The characterization of pure unrectifiability given by Theorem~\ref{t:mainthm1} is in fact established in this paper for any locally compact metric space $(X,d)$ and for a transversal family of maps $\Pi_\lambda \colon X \to \mathbb{R}^m$, parametrized by an open set $\Lambda \subset \mathbb{R}^l$ with $m \le l$. We refer to Theorem~\ref{p:finite1} and Theorem~\ref{t:final2} for the analogues of Theorem~\ref{t:mainthm2} and Theorem~\ref{t:mainthm3}, respectively, in this broader metric-space setting.

 The family of orthogonal projections may be locally regarded as a transversal family of maps (see, for instance, \cite[Appendix A]{tas1}). In particular, any statement holding for $\mathcal{L}^\ell$-almost every $\lambda \in \Lambda$ translates into the corresponding statement for $\gamma_{n,m}$-almost every $V \in \mathrm{Gr}(n,m)$.

The notion of transversality considered here is that introduced by Peres and Schlag in~\cite{per}, and we refer to Subsection~\ref{ss:transversal} for a precise definition. Examples include, but are not limited to, horizontal projections in Heisenberg groups~\cite{hov1}, closest-point projections~\cite{bal}, geodesic-based projections on Riemannian surfaces or manifolds~\cite{hov,almtas}, and hyperbolic spaces~\cite{ise}.

\subsection{Idea of the proof}
For clarity of exposition, we shall present the argument in the setting of orthogonal projections, where the geometric structure is more transparent.

The main line of argument in the proof of our principal result is to first establish the validity of the two alternatives \eqref{e:alter1} and \eqref{e:alter2}, and then to show that whenever $\mu$ fails to satisfy the singular projection property \eqref{e:mainthm2.1}, it must necessarily contain a nontrivial rectifiable component. The fundamental quantity to be controlled is the \emph{maximal conical $m$-density truncated at scale} $\delta>0$, namely
\begin{equation}
\label{e:maxcon}
\sup_{0<\rho<\delta} \frac{\mu(X(x,V,s,\rho))}{(s\rho)^m},
\end{equation}
where $X(x,V,s,\rho)$ denotes the intersection of the ball $B_\rho(x)$ with the cone of aperture $s$ around the $m$-plane $V$ centered at $x$.

When the measure $\mu$ is supported on a set whose intersection with a generic $(n-m)$-plane consists of finitely many points, the classical argument in the proof of the Besicovitch-Federer projection theorem implies that, as $s \to 0^+$, the quantity in \eqref{e:maxcon} can asymptotically attain only two possible values: either $0$ for some $\delta>0$, or $+\infty$ for every $\delta>0$. The delicate point in our setting is that there is no measure-supporting set exhibiting good behavior under slicing; and even if such a set were to exist, it would not, in any case, possess the property of finite slicing.

The novel feature of our approach lies in the construction of a disintegration of $\mu$ along all possible $(n-m)$-planes $W$ through $x$, that is,
\begin{equation}
\label{e:disidea}
\mu = \int \mathring{\mu}_{V,x}\, d\gamma(V),
\end{equation}
for measures $\mathring{\mu}_{V,x}$ supported on $V$, and for a suitable finite $m$-dimensional Borel measure $\gamma$ on the Grassmannian. In codimension one, such a disintegration can be realized directly via the \emph{radial projection map}
\[
r_x \colon \mathbb{R}^n \setminus \{x\} \to \mathbb{S}^{n-1}, \qquad r_x(x') = \frac{x'-x}{|x'-x|}.
\]
In this case, the relation between the disintegration \eqref{e:disidea} and the maximal conical density in \eqref{e:maxcon} becomes transparent, since cones of aperture $s$ can be identified with the preimages of spherical caps of radius $s$ in $\mathbb{S}^{n-1}$, namely
\[
X(x,V_\xi,s) \simeq r_x^{-1}(\mathbb{B}_s(\xi)), \qquad V_\xi = \xi^\perp.
\]

In contrast, in the general higher-codimension case one cannot hope to realize the disintegration \eqref{e:maxcon} through a single map. Here, additional work is required to account for the geometric complexity of the Grassmannian. We show that, in a neighborhood of any point $x$, the space can be covered by finitely many open sets $U_i$, with constants depending only on the codimension $n-m$, such that on each $U_i$ there exists a map
\[
\Phi_i \colon U_i \to M_i
\]
taking values in an $m$-dimensional submanifold $M_i$ of the Grassmannian, which plays, in a suitable sense, the role of a radial projection: cones of aperture $s$ at $x \in U_i$ correspond to the preimages under $\Phi_i$ of balls of radius $s$ centered at $\Phi_i(x)$ in $M_i$. This construction relies crucially on transversality, and is established for a generic transversal family in Proposition~\ref{p:equivcone}. The disintegration \eqref{e:disidea} is then obtained in Proposition~\ref{p:fcv1} by means of all such maps $\Phi_i$.
 
At this stage, the key observation is of a geometric nature. 
The truncated cone $X(x,V,s,\rho)$ can be compared, through simple inclusions, 
with the cylindrical sector obtained as the preimage under $\pi_V$ of the ball $B_{\rho s}(\pi_V(x)) \subset V$. 
This observation allows one to relate the auxiliary measures $\mathring{\mu}_{V,x}$ in \eqref{e:disidea} 
to the measures $\mu_{V,y}$ arising from the disintegration of $\mu$ with respect to orthogonal projection $\pi_V$. 
It follows that these two families of measures are essentially equivalent, 
up to a Jacobian factor given by the $m$-th power of the distance from the center $x$.
 More precisely, we show in Proposition~\ref{p:disretr1} that for almost every $m$-plane $V$ there exists a \emph{locally finite} measure $\mu_{V,x}$ on $(V^\perp +x) \setminus \{x\}$ satisfying
\begin{equation}\label{e:intrjac}
\mathring{\mu}_{V,x} \simeq \int |x'-x|^m \, d\mu_{V,x}.
\end{equation}
The relation between $\mu_{V,x}$ and $\mu_{V,y}$ is further clarified in Proposition~\ref{p:finite}, where it is shown, in brief, that if $\mu$ admits absolutely continuous projections, then not only is $\mu_{V,x}$ finite, but in fact the identity $\mu_{V,x} = \mu_{V,\pi_V(x)}$ holds for almost every $m$-plane $V$.

Once the relation~\eqref{e:intrjac} has been established, the proof of the injectivity property in Theorem~\ref{t:mainthm2} relies on the observation that, if $x$ is a point such that $\pi_V(x)$ has infinite $m$-dimensional density for the projected measure $\pi_V\mu$, and if the measure $\mu_{V,x}$ exists and satisfies~\eqref{e:intrjac}, then no other point $x' \in V^\bot +x$ can enjoy the same property. The rigorous proof of this idea is contained in Theorem~\ref{p:finite1}.

\medskip

The role of relation~\eqref{e:intrjac} is also crucial in the proof of Theorem~\ref{t:mainthm3}. In order to keep the exposition as transparent as possible, we discuss the codimension-one case. Our strategy proceeds as follows. The measure $\gamma$ can be decomposed into its absolutely continuous and singular parts with respect to $\mathcal{H}^{n-1}$, denoted by $\gamma^a$ and $\gamma^s$, respectively. This induces the corresponding orthogonal decomposition of $\mu$ in a neighborhood of $x$:
\[
\mu = \mu^a + \mu^s := \int \mathring{\mu}_{V,x} \, d\gamma^a(V) + \int \mathring{\mu}_{V,x} \, d\gamma^s(V).
\]

While the maximal conical $(n-1)$-density associated with $\mu^s$ can be estimated analogously to~\cite[Theorem 3.3.4]{fed1} by invoking the density properties of outer measures in Lemma~\ref{l:density}, one might be tempted to estimate the corresponding quantity for $\mu^a$ by exploiting the equivalence~\eqref{e:intrjac}. Indeed, the presence of the Jacobian factor $|x-x'|^{n-1}$ naturally suggests estimating~\eqref{e:maxcon} by means of the quantity
\[
s^{1-n} \int_{\mathbb{B}_s(V_\xi)} \mu_{V,x}(B_\rho(x)) \, d\gamma^a(V),
\]
and then appealing to the Lebesgue differentiation theorem to deduce that, as $s \to 0$, the above expression is asymptotically bounded from above by $\mu_{V,x}(B_\rho(x))$.

However, the map $V \mapsto \mu_{V,x}(\mathbb{R}^n)$ is not, in general, integrable. Indeed, such integrability would fail even under the stronger assumption $\pi_V\mu \ll \mathcal{H}^m$ for almost every $V$, since it would imply higher integrability of the projected measures $\pi_V\mu$ (see, for instance,~\cite{chatol}). The technical argument needed to circumvent this difficulty is developed in Lemma~\ref{l:keyalternative}.

\medskip

This approach is particularly powerful, as it allows us to reduce the analysis of the three Besicovitch alternatives to a single novel one. Specifically, it suffices to consider the asymptotic behavior
\begin{equation}
\label{e:singlealt}
\lim_{\delta \to 0} \limsup_{s \to 0} \sup_{0<\rho<\delta} 
\frac{\mu(X(x,V,s,\rho))}{(s\rho)^m} = 0,
\end{equation}
which is precisely the content of Lemma~\ref{l:keyalternative}. Compared with one of the key alternatives in the classical setting, the novelty here lies in the fact that the parameter $\delta$ is also sent to zero. In the original formulation, the corresponding vanishing condition occurs at a fixed finite scale $\delta>0$, which may depend on the point. That stronger, finite-scale condition allows one to deduce that if it holds at almost every point of a purely unrectifiable set, then such a set must necessarily be $\mu$-negligible.

In our case, the vanishing of $\delta$ forces us to restrict attention to purely unrectifiable sets which, in addition, arise as graphs of measurable functions 
$g \colon V \to V^\perp$ (see Lemma~\ref{l:graphical}). 
This graphical condition is entirely consistent with the atomicity of the conditional measures $\mu_{V,y}$. 
By a measurable selection principle, if $\mu$ charges a purely unrectifiable set, then the set necessarily contains a nontrivial measurable graph. 
Such a graph must, however, be $\mu$-negligible in light of the preceding argument. This is established in Theorem~\ref{t:final}.

\subsection{Organization of the paper}

After this introduction, the paper is organized into four main sections and a final appendix.
Section \ref{s:pr} fixes notation and collects several preliminary results, including the construction of the maps $\Phi$ in Proposition \ref{p:equivcone}, which will be used to disintegrate $\mu$ around any point of the space. In Section \ref{s:rp}, we establish a number of technical propositions describing the relation between the families of measures obtained by disintegrating $\mu$ with respect to all $\Phi$ and those arising from disintegration with respect to our generalized projections. The section concludes with the proof of Theorem \ref{p:finite1}, which establishes the injectivity of the projection maps.
Section \ref{s:rvs} is devoted to the derivation of the single alternative stated in \eqref{e:singlealt}, as well as to the proof of Theorem \ref{t:mainthm3} in the setting of locally compact metric spaces and for generalized families of projections (Theorems \ref{t:final}, \ref{t:final2}, and \ref{t:mainthm1.}). The final section, \ref{s:recslice}, presents the rectifiability via slicing for general Radon measures. Two appendices complete the paper: Appendix A contains the proof of Proposition \ref{p:equivcone}, and Appendix B establishes certain measurability properties used throughout.

\section{Notation and Preliminaries}
\label{s:pr}
We work in a locally compact metric space $(X,d)$. 
Throughout the paper, we let $m$ and $l$ be positive integers satisfying $m \le l$. 
The symbol $\Lambda$ denotes an open and bounded subset of $\mathbb{R}^l$. We denote by $B_\rho(x)$, $B^m_\rho(y)$, and $\mathbb{B}_s(\lambda)$ the open balls in $X$, $\mathbb{R}^m$, and $\mathbb{R}^\ell$, respectively.

For $A \subset X$, $\mathcal{H}^m(A)$ denotes the $m$-dimensional Hausdorff measure. 
A Borel set $R \subset X$ is said to be \emph{countably $m$-rectifiable} if 
\[
R = \bigcup_{i=1}^\infty f_i(E_i),
\]
for Lipschitz functions $f_i \colon E_i \to X$ defined on some bounded set $E_i \subset \mathbb{R}^m$.
A finite Borel measure $\mu$ in $X$ is \emph{$m$-rectifiable} if it is absolutely continuous with respect to $\mathcal{H}^m$ and $\mu(X\setminus R)=0$ for some countable $m$-rectifiable set $R \subset X$. Finally, a set $E \subset X$ is \emph{purely $m$-unrectifiable} if 
\[
\mathcal{H}^m(E \cap R)=0, \ \ \text{ for every countably $m$-rectifiable set $R$.}
\]

Given a Borel measurable map $\Pi \colon X \to \mathbb{R}^m$ and a finite Borel measure $\mu$ on $X$ (see~\cite[Theorem 5.3.1]{ags}), 
we can consider a family of probability measures $(\mu_y)_{y \in \mathbb{R}^m}$ obtained by disintegrating $\mu$ with respect to the map $\Pi$. Presicely,
\begin{align}
\label{e:disintegration00}
&y \mapsto \mu_y(B) \text{ is Borel measurable}, 
\qquad \text{for every Borel set } B \subset X\\
\label{e:disintegration0}
&\mu(B) = \int_{\mathbb{R}^m} \mu_y(B) \, d(\Pi\mu)(y), 
\qquad \text{for every Borel set } B \subset X,
\end{align}
where $\Pi\mu$ denotes the push-forward of $\mu$ under $\Pi$, namely,
\[
\Pi\mu(B):= \mu(\Pi^{-1}(B)), \ \ \text{ for every Borel set $B \subset \mathbb{R}^m$}.
\]

Recall that, by the standard properties of disintegration, 
the family $(\mu_y)_{y \in \mathbb{R}^m}$ is $\Pi\mu$-a.e.\ uniquely determined, and that
\[
\mu_y \restr \Pi^{-1}(y) = \mu_y, 
\qquad \text{for $\Pi\mu$-a.e.\ } y \in \mathbb{R}^m,
\]
where $\mu_y \restr \Pi^{-1}(y)$ denotes the restriction of $\mu_y$ to the fiber $\Pi^{-1}(y)$. 
We will use the compact notation $\mu = \mu_y \otimes \Pi\mu$ to denote the disintegration formula~\eqref{e:disintegration0}.

\subsection{A density result for outer measures}

An \emph{outer measure} $\Psi$ on $X$ is a map 
\[
\Psi \colon 2^{X} \to [0,\infty]
\]
satisfying:
\begin{itemize}
    \item $\Psi(\emptyset) = 0$,
    \item $\Psi\!\left( \bigcup_{i=1}^\infty E_i \right) 
    \le \sum_{i=1}^\infty \Psi(E_i)$ 
    whenever $E_i \subset X$ for all $i \in \mathbb{N}$.
\end{itemize}

We will make use of a result on densities for outer measures (see~\cite[Theorem 2.9.17]{fed1}), 
which we include here together with its proof for the reader’s convenience.

\begin{lemma}
\label{l:density}
    Let $\psi$ be an outer measure on $X$. If $E \subset X$ is a $\mathcal{H}^n$-measurable set such that $\psi(E)=0$, then for $\mathcal{H}^n$-a.e. $x \in E$ we have that
    \[
    \limsup_{r \to 0^+} \frac{\psi(B_r(x))}{r^n} \text{ equals either $0$ or $\infty$}.
    \]
\end{lemma}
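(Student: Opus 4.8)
The plan is to show that the \emph{exceptional set}
$N:=\{x\in E:\ 0<\limsup_{r\to0^+}\psi(B_r(x))/r^n<\infty\}$
is $\mathcal H^n$-negligible. For rationals $0<a<b$ and $p\in\mathbb N$ put
\[
W_{a,b,p}:=\Bigl\{x\in E:\ \sup_{0<r\le 1/p}\frac{\psi(B_r(x))}{r^n}\le b\ \ \text{and}\ \ \limsup_{r\to0^+}\frac{\psi(B_r(x))}{r^n}\ge a\Bigr\}.
\]
A direct check gives $N=\bigcup_{a,b,p}W_{a,b,p}$, a countable union, so it suffices to prove $\mathcal H^n(W_{a,b,p})=0$ for each triple; fix one and write $W:=W_{a,b,p}$, $\delta_0:=1/p$. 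By monotonicity of the outer measure, $\psi(W)\le\psi(E)=0$. Thus on $W$ one has simultaneously an \emph{upper density bound} $\psi(B_r(x))\le b\,r^n$ for $r\le\delta_0$, a \emph{lower density condition} along a vanishing sequence of scales, and $\psi(W)=0$, and the argument must use all three. (Measurability of $W$ is not needed: the estimates below are carried out with the premeasures $\mathcal H^n_\delta$, which are defined on arbitrary subsets.)

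Next I would run a covering argument. Fix $\varepsilon>0$ and $\delta\in(0,\delta_0)$, and, using $\psi(W)=0$ together with the outer regularity of $\psi$, choose an open set $U\supseteq W$ with $\psi(U)<\varepsilon$. For each $x\in W$ the lower density condition supplies arbitrarily small radii $r<\delta/10$ with $B_r(x)\subseteq U$ and $\psi(B_r(x))\ge\tfrac a2\,r^n$; these balls form a fine cover of $W$. The $5r$-covering lemma then extracts a countable pairwise disjoint subfamily $\{B_{r_i}(x_i)\}_i$ with $W\subseteq\bigcup_i B_{5r_i}(x_i)$, whence, using $\operatorname{diam}B_{5r_i}(x_i)\le 10 r_i<\delta$ and $\psi(B_{r_i}(x_i))\ge\tfrac a2 r_i^n$,
\[
\mathcal H^n_\delta(W)\ \lesssim_n\ \sum_i r_i^n\ \le\ \frac{2}{a}\sum_i\psi\bigl(B_{r_i}(x_i)\bigr).
\]

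The crux is to bound $\sum_i\psi(B_{r_i}(x_i))$. The balls $B_{r_i}(x_i)$ are pairwise disjoint and contained in $U$; combining this with the upper bound $\psi(B_{r_i}(x_i))\le b\,r_i^n$ one wants to conclude that their total $\psi$-mass does not exceed $C(n,a,b)\,\psi(U)<C(n,a,b)\,\varepsilon$. Granting this, $\mathcal H^n_\delta(W)\lesssim_{n,a,b}\varepsilon$ uniformly in $\delta$, and letting first $\delta\to0$ and then $\varepsilon\to0$ forces $\mathcal H^n(W)=0$, which completes the proof. I expect this last step to be the main obstacle, and it is precisely where the two‑sided nature of the hypothesis is essential: for a genuinely (countably) additive measure one would simply write $\sum_i\psi(B_{r_i}(x_i))=\psi\bigl(\bigcup_i B_{r_i}(x_i)\bigr)\le\psi(U)$ and would, in fact, obtain the stronger conclusion that the density vanishes $\mathcal H^n$‑a.e.\ on $E$; for a merely subadditive outer measure this identity runs the wrong way on a disjoint family, so the upper density bound $\psi(B_r(x))\le b\,r^n$ must be exploited to compensate — which is exactly why the statement can only assert the dichotomy ``$0$ or $\infty$'' rather than the vanishing of the density.
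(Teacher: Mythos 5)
There is a genuine gap, and it sits exactly where you flagged it. Your reduction to the sets $W_{a,b,p}$ and the Vitali covering of $W$ by balls with $\psi(B_r(x))\ge \tfrac a2 r^n$ are fine, but the two ingredients you then invoke are both unavailable for a general outer measure. First, outer regularity: a monotone, countably subadditive set function with $\psi(W)=0$ need not admit open sets $U\supseteq W$ of small $\psi$-mass (take $\psi(A)=0$ for $A$ countable and $\psi(A)=1$ otherwise, and $W$ countable dense). Second, and more fatally, the step $\sum_i\psi(B_{r_i}(x_i))\le C\,\psi(U)$ for a disjoint family is precisely superadditivity, which fails for outer measures; the outer measures this lemma is actually applied to in the paper are of the form $\psi_\delta(E)=\sup_{0<\rho<\delta}\rho^{-m}\int_E^* f_\rho\,d\mathcal H^m$, and a supremum over scales of measures is genuinely non-additive on disjoint Borel sets (already $\max(\mu_1,\mu_2)$ loses a factor on a disjoint family). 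The upper bound $\psi(B_{r_i})\le b\,r_i^n$ cannot compensate: it only converts $\sum_i\psi(B_{r_i})$ into $b\sum_i r_i^n$, which is the quantity you are trying to bound in the first place. So the strategy of covering the \emph{bad} set $W$ does not close.

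The paper's proof works from the opposite side. It reduces to $E$ closed of finite measure, sets $C_i:=\{x\in E:\psi(B_r(x))\le i r^n \text{ for } 0<r<1/i\}$, and shows the $\limsup$ vanishes $\mathcal H^n$-a.e.\ on each $C_i$ (the lower density condition is never used). Fixing $x\in C_i$, subadditivity and $\psi(C_i)\le\psi(E)=0$ give $\psi(B_r(x))\le\psi(B_r(x)\setminus C_i)$; one then covers the \emph{complement} $B_r(x)\setminus C_i$ by balls $B_{t_{x'}}(x')$ with $t_{x'}=\mathrm{dist}(x',C_i)/2$, so that each enlarged ball $5B$ sits inside a ball $B_{8t_{x'}}(x'')$ centered at a point $x''\in C_i$, where the \emph{upper} bound applies and yields $\psi(5B)\le i\,8^n t_{x'}^n$. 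Summing over a Vitali-disjoint subfamily contained in $B_{2r}(x)\setminus C_i$ converts $\sum t^n$ into $\mathcal H^n(B_{2r}(x)\setminus C_i)$ — superadditivity is only ever needed for $\mathcal H^n$, not for $\psi$ — and the density theorem for the $\mathcal H^n$-measurable set $C_i$ gives $\mathcal H^n(B_{2r}(x)\setminus C_i)=o(r^n)$ for a.e.\ $x\in C_i$. This is also why measurability of $E$, which you set aside, is actually used. To repair your argument you would have to abandon the covering of $W$ and adopt this complement-covering mechanism.
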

\begin{proof}
Since $\mathcal{H}^n$ is $\sigma$-finite in $X$, we may assume with no loss of generality that $\mathcal{H}^n(E)<\infty$. Moreover, since by the inner regularity of Hausdorff measures, for every $\epsilon >0$, we find a compact set $K \subset E$ such that $\mathcal{H}^n(E \setminus K) \leq \epsilon$, we may further assume that $E$ is a closed set.

   For every $i=1,2,\dotsc$, consider the closed set 
   \[
   C_i:= \{x \in E \ | \  \psi(B_r(x)) \leq i r^n, \ \text{for } 0 <r <1/i  \}.
   \]
   Notice that, since
   \[
   \{x \in E \ | \  \limsup_{r \to 0^+} \psi(B_r(x))/r^n < \infty\}= \bigcup_{i=1}^\infty C_i,
   \]
   we can reduce ourselves to prove that for every $i=1,2,\dotsc$
   \begin{equation}
\limsup_{r \to 0^+} \frac{\psi(B_r(x))}{r^n} =0, \ \ \text{ for $\mathcal{H}^n$-a.e. $x \in C_i$}.
   \end{equation}
 To this purpose take a point $x \in C_i$ and $r \in (0,1/4i)$. Since $C_i$ is closed, we can cover $B_r(x) \setminus C_i$ by the family $\mathcal{F}$ of balls $\mathbb{B}_{t_{x'}}(x')$ with
\[
x' \in B_r(x) \setminus C_i \ \ \text{and} \ \ t_{x'} = \text{dist}(x,C_i)/2 >0.
\]
 For every such $(x',t_{x'})$, we have $t_{x'} \leq r/2$ and $|x'-x''| < 3t_{x'}$ for some $x'' \in C_i \cap B_r(x)$, hence
 \[
 \mathbb{B}_{t_{x'}}(x') \subset \mathbb{B}_{2r}(x), \qquad 8t_{x'} < 1/i, \qquad \psi(\mathbb{B}_{5t_{x'}}(x')) \leq \psi(\mathbb{B}_{8t_{x'}}(x'')) \leq i8^nt_{x'}^n.
 \]
By applying Vitali covering theorem, we find a countable disjoint subfamily $\mathcal{G}$ of $\mathcal{F}$ such that $B_r(x) \setminus C_i \subset \bigcup_{B \in \mathcal{G}} 5B$, where $5B$ denotes the concentric ball to $B$ with five times the radius of $B$, and, by denoting $c(B) \in X$ the center of the ball $B$, we have
\begin{align*}
    \psi(B_r(x)) =   \psi(B_r(x) \setminus C_i) &\leq \sum_{B \in \mathcal{G}} \psi(5B) \\
    &\leq \sum_{B \in \mathcal{G}} i8^n t_{c(B)}^n \\
    &\leq i8^n \mathcal{H}^n(\mathbb{B}_{2r}(x) \setminus C_i).
\end{align*}
Therefore we obtained that for every $x \in C_i$ and $r \in (0,1/4i)$ it holds
\[
\limsup_{r\to 0^+} \frac{\psi(B_r(x))}{r^n} \leq 16^n i\limsup_{i\to 0^+} \frac{\mathcal{H}^n(\mathbb{B}_{2r}(x) \setminus C_i)}{(2r)^n}.
\]
Since the right hand-side of the above inequality is $0$ for $\mathcal{H}^n$-a.e. $x \in C_i$ we obtain the desired assertion.
\end{proof}

\subsection{Transversality and cones}
\label{ss:transversal}

The notion of \emph{transversal family of maps} was originally introduced in \cite[Definition 7.2]{per} for families of maps $\Pi_\lambda \colon X \to \mathbb{R}^m$ parametrized by $\lambda \in \Lambda \subset \mathbb{R}^l$ and defined on a locally compact metric space $(X,d)$. The following definition of transversality can be found in \cite{hov} except for the fact that we do not assume regularity with respect to the spatial variable $x \in X$. 

\begin{definition}
\label{d:transversal}
Let $(X,d)$ be a locally compact metric space, let $m,l$ be integers satisfying $0 <m \leq l$, let $\Lambda \subset \mathbb{R}^l$ be open and bounded, and let 
\[
\Pi \colon X \times \Lambda \to \mathbb{R}^m
\]
be a continuous map. Define $\Pi_\lambda \colon X \to \mathbb{R}^m$ as $\Pi_\lambda(x):=\Pi(x,\lambda)$.

We say that the family $(\Pi_\lambda)_{\lambda \in \Lambda}$ is transversal if and only if the following conditions hold true.
\begin{enumerate}[(H.1)]
    \item For every $x \in X$ the map $\lambda \mapsto \Pi_\lambda(x)$ belongs to $C^2(\Lambda;\mathbb{R}^m)$ and
    \begin{equation}
    \label{e:h1}
    \sup_{(\lambda,x) \in \Lambda \times X}\|D^j_\lambda \Pi_\lambda(x)\| < \infty, \ \ \text{for }j=1,2.
    \end{equation}
    \item For $\lambda \in \Lambda$ and $x,x' \in X$ with $x \neq x'$ define
    \begin{equation}
    \label{e:deftxx'}
        T_{xx'}(\lambda) := \frac{\Pi_\lambda(x) - \Pi_\lambda(x')}{d(x,x')};
    \end{equation}
    then there exists a constant $C' >0$ such that the property
    \begin{equation}
    \label{e:h2}
        |T_{xx'}(\lambda)| \leq C' \ \ \ \text{ implies } \ \ \
        |\text{J}_\lambda T_{xx'}(\lambda)| \geq C'.
    \end{equation}
   \item There exists a constant $C'' >0$ such that 
   \begin{equation}
   \label{e:h3}
     \|D_\lambda T_{xx'}(\lambda)\|, \|D^2_\lambda T_{xx'}(\lambda)\| \leq C'',
   \end{equation}
   for $\lambda \in \Lambda$ and $x,x' \in X$ with $x \neq x'$.
\end{enumerate}
\end{definition}

Here we need also to define cones around the preimages of points. 

\begin{definition}[Cone 1]
\label{d:cone1}
Let $\lambda \in \Lambda$, $x \in X$, $0<s<1$, and $r>0$, we define
\begin{align}
 \label{e:cone1}
    &X(x,\lambda,s) := \{x \in X \ | \ |\Pi_\lambda(x') -\Pi_\lambda(x)| < s\, d(x,x')  \},\\
    &X(x,\lambda,s,r) := X(x,\lambda,s) \cap \overline{B}_r(x).
\end{align}
\end{definition}
\begin{definition}[Cone 2]
\label{d:cone2}
Let $\lambda \in \Lambda$, $x \in X$, $0<s<1$, and let $V \subset \mathbb{R}^l$ be an $m$-dimensional plane, we define
\begin{align}
\label{e:cone2}
    &L_V(x,\lambda,s) := \{x \in X \ | \ \Pi_{\lambda'}(x')-\Pi_{\lambda'}(x) =0, \ |\lambda'-\lambda| < s, \ \pi_{V^\bot}(\lambda'-\lambda)=0 \},
\end{align}
where $V^\bot$ is the orthogonal to $V$.
\end{definition}

 In the studying of rectifiability, the key property of transversality relies on the equivalence between the two definitions of cones introduced above. Indeed, one can prove that, given $\lambda_0 \in \Lambda$, there exist a parameter $s_0 \in (0,1)$ and a constant $c \geq 1$ such that for every $x \in X$ and for every $\lambda$ locally around $\lambda_0$ the following equivalence holds
 \begin{equation}
 \label{e:equiconestand}
 \bigcup_{V \in \mathcal{V}_m} L_V(x,\lambda,s/c) \subset X(x,\lambda,s) \subset \bigcup_{V \in \mathcal{V}_m} L_V(x,\lambda,cs), \ \ \text{for every $s \in (0,s_0)$},
 \end{equation}
 where $\mathcal{V}_m$ denotes the family of all coordinate $m$-plane of $\mathbb{R}^l$. However, for our purposes, we need to guarantee that both sets in the left hand-side and right hand-side of \eqref{e:equiconestand} can be replaced with the preimages of suitable defined maps from $X$ with values in $\Lambda$. More precisely, given a point $x \in X$ and a parameter $\lambda \in \Lambda$, one can construct a map 
 \[
 \Phi_{V,\lambda} \colon X(x,\lambda,s_0) \to \Lambda,
 \]
 which will play a role of a (curvilinear) radial projection, and satisfies the following proposition.

 \begin{proposition}
 \label{p:equivcone}
 Let $(X,d)$ be a locally compact metric space, let $(\Pi_\lambda)_{\lambda \in \Lambda}$ be a transversal family of maps, let $x \in X$, let $\lambda_0 \in \Lambda$, and let $\delta >0$ be such that $\overline{\mathbb{B}}_{\delta}(\lambda_0) \subset \Lambda$. 
 
 Then there exist $s_0 \in (0,\delta/2)$ and a constant $c \geq 1$ such that, to every coordinate $m$-plane $V \subset \mathbb{R}^l$ and every $\lambda \in  \mathbb{B}_{s_0}(\lambda_0)$ we can associate a Borel set $U(V)$ in $\mathbb{R}^n \setminus \{x\}$ and a map
 \begin{align*}
 \Phi_{V,\lambda} \colon X(x,\lambda,s_0/c) \cap U(V)  \to \mathbb{B}_{s_0}(\lambda) \cap (V + \lambda) 
 \end{align*}
 such that 
 \begin{align}
 \label{e:curvradpro1}
     T_{xx'}(\Phi_{V,\lambda}(x'))&=0,  \ \ \text{for every $x' \in X(x,\lambda,s_0/c) \cap U(V)$},\\
     \label{e:contPsi}
    (x',&\lambda) \mapsto \Phi_{V,\lambda}(x') \ \ \text{is continuous}\\
     \label{e:curvradpro1.97}
 &X(x,\lambda,s_0/c) \subset \bigcup_{V \in \mathcal{V}_{l,m}} U(V). 
 \end{align}
 
 In addition, by letting $\pi_{V^\bot} \colon \mathbb{R}^l \to V^\bot$ denote the projection of $\mathbb{R}^l$ onto the orthogonal complement of $V$ and by letting
 \[
 \lambda^\bot := \pi_{V^\bot}(\lambda -\lambda_0) \quad \text{and} \quad L:= \sup_{x \neq x'} \emph{Lip}(T_{xx'})+1,
 \]
 then, for every $\lambda \in \mathbb{B}_{s_0/2cL}(\lambda_0)$ and every $s \in (0,s_0/2c)$ the following inclusions hold true 
 \begin{align}
  \label{e:equivcone1234}
\Phi_{V,\lambda_0+\lambda^\bot}^{-1}\big(\mathbb{B}_{s/c}(\lambda)\big) \subset X(x,\lambda,s) \cap U(V) \subset  \Phi_{V,\lambda_0 + \lambda^\bot}^{-1}\big(\mathbb{B}_{cs}(\lambda)\big), 
 \end{align}
 for every $V \in \mathcal{V}_{l,m}$.
 \end{proposition}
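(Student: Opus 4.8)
The plan is to implement the curvilinear radial projection $\Phi_{V,\lambda}$ via the implicit function theorem applied to the map $T_{xx'}$, viewed as a function of the parameter, and then to extract the claimed inclusions from a quantitative version of that construction. First I would fix the point $x\in X$ and, for each $x'\neq x$, consider the $C^2$ map $\lambda\mapsto T_{xx'}(\lambda)\in\mathbb{R}^m$ (this is legitimate by (H.1) and (H.3), which give uniform $C^2$ bounds). Condition (H.2) says that on the sublevel set $\{|T_{xx'}|\le C'\}$ the Jacobian $\mathrm{J}_\lambda T_{xx'}$ is bounded below by $C'$, so $DT_{xx'}$ has full rank $m$ there; hence near any $\lambda$ with $T_{xx'}(\lambda)=0$ the zero set $\{\lambda':T_{xx'}(\lambda')=0\}$ is, by the implicit function theorem, an $(l-m)$-dimensional $C^2$-graph. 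The combinatorial point — that finitely many coordinate $m$-planes $V\in\mathcal{V}_{l,m}$ suffice to parametrize transversally all these graphs, with a covering $U(V)$ of the domain — comes from the fact that a uniform lower bound on $\mathrm{J}_\lambda T_{xx'}$ means that for each such $\lambda'$ there is a coordinate $m$-plane $V$ onto which $DT_{xx'}(\lambda')$ restricts isomorphically with quantitatively controlled inverse (pigeonhole on the $\binom{l}{m}$ coordinate minors, exactly as in the derivation of~\eqref{e:equiconestand}). On $U(V):=\{x': \text{the $V$-minor dominates}\}$ one then solves $T_{xx'}(\lambda')=0$ for the $V$-component of $\lambda'$ as a function of its $V^\bot$-component, fixing the latter equal to $\lambda_0+\lambda^\bot$; this produces $\Phi_{V,\lambda}(x')\in(V+\lambda)\cap\mathbb{B}_{s_0}(\lambda)$ and gives~\eqref{e:curvradpro1} by construction, while~\eqref{e:curvradpro1.97} is precisely the statement that the chosen $U(V)$ cover the cone.

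For the continuity~\eqref{e:contPsi}, the implicit function solution depends continuously (indeed $C^1$) on the data $(x',\lambda)$, since $T_{xx'}(\lambda')$ is jointly continuous in $(x',\lambda')$ by continuity of $\Pi$ and the nondegeneracy is uniform; I would state this as a direct consequence of the quantitative implicit function theorem with uniform constants. Choosing $s_0$ small enough — smaller than $\delta/2$ and than the radius on which the implicit solution stays within $\mathbb{B}_{s_0}(\lambda)$ and within the region where $|T_{xx'}|\le C'$ — guarantees the map is well-defined with values in the asserted target. The remaining ingredient for the last display~\eqref{e:equivcone1234} is a two-sided Lipschitz comparison: I would show that for $x'\in X(x,\lambda,s_0/c)\cap U(V)$ one has $|T_{xx'}(\lambda)|$ comparable (up to the constant $c$, absorbing $L=\sup\mathrm{Lip}(T_{xx'})+1$ and the lower Jacobian bound) to the distance $|\lambda - \Phi_{V,\lambda_0+\lambda^\bot}(x')|$ in $V+\lambda_0+\lambda^\bot$. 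Concretely: $T_{xx'}$ vanishes at $\Phi_{V,\lambda_0+\lambda^\bot}(x')$, so by the mean value inequality $|T_{xx'}(\lambda)|\le L\,|\lambda-\Phi_{V,\lambda_0+\lambda^\bot}(x')|$, giving the first inclusion after noting $|T_{xx'}(\lambda)|<s$ iff $x'\in X(x,\lambda,s)$; conversely, using that $DT_{xx'}$ restricted to the $V$-direction is invertible with norm of the inverse $\le c$ along the segment (again by (H.2)–(H.3) and a smallness-of-scale argument), one gets $|\lambda-\Phi_{V,\lambda_0+\lambda^\bot}(x')|\le c\,|T_{xx'}(\lambda)|$, which yields the second inclusion.

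The main obstacle I expect is the uniformity of all constants in $x'$: the implicit function theorem must be applied simultaneously over the whole (possibly unbounded) family $\{x':x'\neq x\}$, and one must verify that the radius of the neighborhood on which the solution exists, the Lipschitz bounds on $\Phi_{V,\lambda}$, and the choice of dominating coordinate plane can all be taken independent of $x'$ — this is exactly where the \emph{uniform} bounds in (H.1)–(H.3) (rather than pointwise ones) are essential, and where the normalization by $d(x,x')$ in the definition~\eqref{e:deftxx'} of $T_{xx'}$ does its work. A secondary technical nuisance is ensuring that reparametrizing from base point $\lambda$ to $\lambda_0+\lambda^\bot$ (as required in~\eqref{e:equivcone1234}) stays within all the smallness thresholds; this is handled by shrinking to $\lambda\in\mathbb{B}_{s_0/2cL}(\lambda_0)$ and $s<s_0/2c$ as in the statement, so that every point involved lies in $\mathbb{B}_{s_0}(\lambda_0)\subset\Lambda$. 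Because the argument is essentially a quantitative implicit-function-theorem computation with bookkeeping of constants, I would carry it out in detail in the appendix, as the paper indicates.
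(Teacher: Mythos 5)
Your proposal is correct and follows essentially the same route as the paper's Appendix~A proof: the Cauchy--Binet/pigeonhole selection of a dominating coordinate minor to define $U(V)$, a quantitative inverse/implicit function argument (the paper uses the Peres--Schlag Lemma~\ref{l:ps} applied to $\tilde T = A_{xx'}^{-1}T_{xx'}$, which is your ``quantitative IFT with uniform constants'') to define $\Phi_{V,\lambda}(x')$ as the unique zero of $T_{xx'}$ on $V+\lambda$, the Lipschitz bound $|T_{xx'}(\lambda)|\le L\,|\lambda-\Phi_{V,\lambda}(x')|$ for one inclusion and the controlled-inverse (ball-image) bound for the other, and the observation that $V+\lambda=V+\lambda_0+\lambda^\perp$ to reconcile the two base points. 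The only cosmetic differences are that the paper proves continuity by a sequential uniqueness argument rather than by continuous dependence in the IFT (your parenthetical ``$C^1$'' in $x'$ is not meaningful on a metric space, but only continuity is needed), and that it first builds $\Phi_{V,\lambda}$ for general $\lambda$ and then identifies it with $\Phi_{V,\lambda_0+\lambda^\perp}$.
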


\section{The Probabilistic Injective Projection Property}
\label{s:rp}

In this section, we prove that a measure possessing the singular projection property also satisfies the probabilistic injective projection property. To prove the main result of this section, we need the following two technical results. They are also crucial for the rectifiability criterion in Section~\ref{s:rvs}.

\begin{proposition}
\label{p:fcv1}
  Let $(X,d)$ be a locally compact metric space, let $(\Pi_\lambda)_{\lambda \in \Lambda}$ be a transversal family of maps, let $\mu$ be a finite Borel measure on $X$, and let $x \in X$. Then for $\mathcal{L}^l$-a.e. $\lambda \in \Lambda$ we have 
    \begin{equation}
    \label{e:boundcone3}
   \sup_{s \in (0,1)} s^{-m}\mu\big(X(x,\lambda,s)\big) < \infty.
    \end{equation}
   
   Moreover, given $\lambda_0\in \Lambda$, there exists a Borel measurable family $(\mathring{\mu}_{\lambda,x})_{\lambda \in \Lambda}$ of finite Borel measures in $X$ such that $\mathcal{L}^l$-a.e. $\lambda$ in an open ball centered at $\lambda_0$, any limit measures $\hat{\mu}_{\lambda,x}$ obtained as 
   \begin{equation}
        (s_i)^{-m}\mu \restr X(x,\lambda,s_i) \rightharpoonup \hat{\mu}_{\lambda,x}, \ \ \text{weakly* in $X$ as $s_i \to 0^+$},
    \end{equation}
    satisfies the following inequalities
    \begin{equation}
    \label{e:boundcar}
        {c^{-m}} \, \mathring{\mu}_{\lambda,x}(B) \leq \hat{\mu}_{\lambda,x}(B) \leq c^m \, \mathring{\mu}_{\lambda,x}(B), \ \ \text{ for every $B \subset X$ Borel}.
    \end{equation}
\end{proposition}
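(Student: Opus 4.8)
The plan is to reduce both assertions to the differentiation theory of finite measures on $\mathbb{R}^m$, using Proposition~\ref{p:equivcone} to represent the cones $X(x,\lambda,s)$ as preimages of small balls under the curvilinear radial projections $\Phi_{V,\lambda}$. Fix $\lambda_0$, apply Proposition~\ref{p:equivcone} to obtain $s_0$, $c$, the Borel sets $U(V)$ and the maps $\Phi_{V,\lambda}$ for $V$ in the finite family $\mathcal{V}_{l,m}$, fix an ordering of $\mathcal{V}_{l,m}$, and disjointify by $U'(V):=U(V)\setminus\bigcup_{V'<V}U(V')$, so that $\{U'(V)\}_V$ is a finite Borel partition of a set containing $X(x,\lambda,s_0/c)$ (by \eqref{e:curvradpro1.97}). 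For $\lambda$ near $\lambda_0$ write $\lambda^\perp=\lambda^\perp(V):=\pi_{V^\perp}(\lambda-\lambda_0)$ and freeze $\Phi_V:=\Phi_{V,\lambda_0+\lambda^\perp}$; the crucial point is that this map depends on $\lambda$ only transversally to $V$. A short computation from \eqref{e:h3} shows that for $\lambda\in\mathbb{B}_{s_0/2cL}(\lambda_0)$ and $s<s_0/2c$ the set $X(x,\lambda,s)$ lies in the common domain $X(x,\lambda_0+\lambda^\perp,s_0/c)$ of the $\Phi_V$; intersecting the inclusions \eqref{e:equivcone1234} with $U'(V)$ and with an arbitrary Borel set $B\subset X$ then yields, for all such $\lambda$, $s$, $V$, $B$,
\[
\Phi_V\!\big(\mu\restr(U'(V)\cap B)\big)\big(\mathbb{B}_{s/c}(\lambda)\big)\;\le\;\mu\big(X(x,\lambda,s)\cap U'(V)\cap B\big)\;\le\;\Phi_V\!\big(\mu\restr(U'(V)\cap B)\big)\big(\mathbb{B}_{cs}(\lambda)\big).
\]

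For fixed $V$ and fixed transverse component $\lambda^\perp$, $\Phi_V$ is a fixed Borel map into the $m$-plane $P_V:=\lambda_0+\lambda^\perp+V$. Disintegrate $\mu\restr U'(V)$ along $\Phi_V$ as $\theta_{V,p}\otimes\Phi_V(\mu\restr U'(V))$ and Lebesgue-decompose $\Phi_V(\mu\restr U'(V))=h_V\,\mathcal{L}^m\restr P_V+\sigma_V$. Using $\Phi_V(\mu\restr(U'(V)\cap B))(\cdot)=\int_{(\cdot)}\theta_{V,p}(B)\,d[\Phi_V(\mu\restr U'(V))](p)$, the differentiation theorem for measures on $P_V\cong\mathbb{R}^m$ (Lebesgue density of the absolutely continuous part, vanishing of the singular part), and a countable-dense-in-$C_c(X)$ argument to make the null set independent of the test function, one gets: for $\mathcal{L}^m$-a.e.\ $p\in P_V$ and every $f\in C_c(X)$,
\[
\lim_{r\to0}\frac{1}{\mathcal{L}^m(\mathbb{B}_r(p)\cap P_V)}\int_{\Phi_V^{-1}(\mathbb{B}_r(p))\cap U'(V)} f\,d\mu\;=\;h_V(p)\int f\,d\theta_{V,p}.
\]
Integrating over $\lambda^\perp$ and applying Fubini on $\mathbb{R}^l=V^\perp\oplus V$, then intersecting the finitely many exceptional sets, I conclude that for $\mathcal{L}^l$-a.e.\ $\lambda$ near $\lambda_0$ all $h_V(\lambda)$ are finite and, feeding the last display into the sandwich above at the two radii $cs$ and $s/c$, for every nonnegative $f\in C_c(X)$,
\[
c^{-m}\,\omega_m\!\sum_{V}h_V(\lambda)\!\int\! f\,d\theta_{V,\lambda}\;\le\;\liminf_{s\to0} s^{-m}\!\!\int_{X(x,\lambda,s)}\!\! f\,d\mu\;\le\;\limsup_{s\to0} s^{-m}\!\!\int_{X(x,\lambda,s)}\!\! f\,d\mu\;\le\; c^{m}\,\omega_m\!\sum_{V}h_V(\lambda)\!\int\! f\,d\theta_{V,\lambda},
\]
where $\omega_m$ denotes the Lebesgue measure of the unit ball of $\mathbb{R}^m$.

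Letting $f\nearrow 1$ and using $\mu(X(x,\lambda,s))=\sum_V\mu(X(x,\lambda,s)\cap U'(V))$ gives $\limsup_{s\to0}s^{-m}\mu(X(x,\lambda,s))\le c^m\omega_m\sum_V h_V(\lambda)<\infty$ for $\mathcal{L}^l$-a.e.\ $\lambda$ near $\lambda_0$; combined with the trivial bound $s^{-m}\mu(X(x,\lambda,s))\le\delta^{-m}\mu(X)$ for $s\ge\delta$ and a countable covering of $\Lambda$ by such balls, this proves \eqref{e:boundcone3}. I then define $\mathring\mu_{\lambda,x}:=\omega_m\sum_V h_V(\lambda)\,\theta_{V,\lambda}$ (a finite Borel measure on $X$, supported on $\bigcup_V\Phi_V^{-1}(\lambda)$, and set equal to $0$ for $\lambda$ outside the ball around $\lambda_0$), whose Borel dependence on $\lambda$ follows from the joint measurability of parametrized disintegrations and Radon--Nikodym derivatives (Appendix~B). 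Finally, if $s_i^{-m}\mu\restr X(x,\lambda,s_i)\rightharpoonup\hat\mu_{\lambda,x}$ weakly$^*$, then $\int f\,d\hat\mu_{\lambda,x}=\lim_i s_i^{-m}\int_{X(x,\lambda,s_i)}f\,d\mu$ lies between the liminf and the limsup above, so $c^{-m}\int f\,d\mathring\mu_{\lambda,x}\le\int f\,d\hat\mu_{\lambda,x}\le c^m\int f\,d\mathring\mu_{\lambda,x}$ for every nonnegative $f\in C_c(X)$, which by the Riesz representation theorem is equivalent to \eqref{e:boundcar}.

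The main obstacle is the decoupling in the middle step: one must freeze $\Phi_{V,\lambda}$ to $\Phi_{V,\lambda_0+\lambda^\perp}$ — legitimate precisely because \eqref{e:equivcone1234} already uses this frozen map, which depends on $\lambda$ only in the directions transverse to $V$ — so that a Fubini slicing turns ``for $\mathcal{L}^l$-a.e.\ $\lambda$'' into ``for $\mathcal{L}^m$-a.e.\ point of a fixed finite measure on an $m$-plane'', where ordinary differentiation applies. The remaining difficulties are routine: the separability argument giving a single null set of $\lambda$'s for all test functions, and the parametrized measurability of $\lambda\mapsto\mathring\mu_{\lambda,x}$, which is exactly what Appendix~B supplies.
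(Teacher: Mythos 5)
Your proposal is correct and follows essentially the same route as the paper's proof: freeze the curvilinear projections to $\Phi_{V,\lambda_0+\lambda^\perp}$, sandwich the cones via \eqref{e:equivcone1234}, disintegrate along these maps and differentiate the pushforward on the $m$-plane $V+\lambda^\perp$, then Fubini over $\lambda^\perp$ and sum over the finitely many coordinate planes. The only differences are cosmetic — you disjointify the sets $U(V)$ and carry the $\omega_m$ normalization explicitly, and you set $\mathring{\mu}_{\lambda,x}$ to zero where the density vanishes rather than keeping the conditional probability, which by Remark~\ref{r:lebequivalence} is immaterial for this proposition.
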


\begin{remark}
\label{r:lebequivalence}
 Although the family $(\mathring{\mu}_{\lambda,x})_{\lambda }$ will be defined for every $\lambda \in \Lambda$, we emphasize that only the $\mathcal{L}^l$-equivalence class of the map
\[
\lambda \mapsto \mathring{\mu}_{\lambda,x}
\]
is relevant here and in Proposition~\ref{p:disretr1}. 
This should be contrasted with Lemma~\ref{l:keyalternative}, where the family 
$(\mathring{\mu}_{\lambda,x})_{\lambda}$ will instead be considered 
pointwise.
\end{remark}

\begin{proof}[Proof of Proposition \ref{p:fcv1}]
We will prove that the proposition holds for $\mathcal{L}^l$-a.e. $\lambda \in \mathbb{B}_{s_0/(16cL)}(\lambda_0)$, for any $\lambda_0 \in \Lambda$, where $s_0 \in (0,1)$, $L \geq 1$, and $c \geq 1$ are the constants given by Proposition \ref{p:equivcone}. Moreover, we will assume with no loss of generality that $\lambda_0=0 \in \Lambda$.

\vspace{2mm}

   Fix a coordinate $m$-plane $V \in \mathcal{V}_{l,m}$. Consider the corresponding Borel set $U(V) \subset \mathbb{R}^n \setminus \{x\}$ and, for every $\lambda \in \mathbb{B}_{s_0}(0)$, the corresponding map 
   \[
   \Phi_{V,\lambda} \colon X(x,\lambda,s_0/c) \cap U(V)  \to \mathbb{B}_{s_0}(\lambda) \cap (V + \lambda) 
   \]
   provided by Proposition \ref{p:equivcone}. Notice that, from the very definition of the constant $L$, by the triangular inequality we deduce the following inclusion 
\[
X(x,0 ,s_0/2c) \subset X(x,\lambda ,s_0/c), \ \ \text{ for every $\lambda \in \mathbb{B}_{s_0/2cL}(0)$}. 
\]
Therefore, for every $\lambda \in \mathbb{B}_{s_0/2cL}(0)$, the restriction 
\[
\Phi_{V,\lambda} \restr \colon X(x,0 ,s_0/2c) \cap U(V) \to V+\lambda 
\]
is a well defined map.

We will construct the family $(\mathring{\mu}_{\lambda})_{\lambda}$ by disintegrating $\mu$ with respect to the maps $\Phi_{V,\lambda}$.
   \vspace{2mm}

   \emph{-Step 1.} Fix $V \in \mathcal{V}_{l,m}$. In order to ease the notation we denote by $\Phi_{\lambda} \colon \mathbb{R}^n \to V + \lambda$ the map defined as
   \[
   \Phi_\lambda(x'):=
   \begin{cases}
       \Phi_{V,\lambda}(x') &\text{ if $x' \in X(x,0,s_0/2c) \cap U(V)$}\\
       \lambda  &\text{ otherwise}.   
\end{cases}
   \]
   
 We claim that the proposition holds for the measure $\tilde{\mu}$ defined as
   \[
\tilde{\mu}:= \mu \restr [X(x,0,s_0/8c) \cap U(V)].
\]
   
  Indeed, we can disintegrate $\tilde{\mu} \otimes \mathcal{H}^{l-m} \restr V^\bot$ with respect to the map (see property \eqref{e:contPsi})
  \[
  \Psi \colon X \times (\mathbb{B}_{s_0}(0) \cap V^\bot) \to \Lambda, \quad \Psi(x,\eta):= \Phi_\eta(x),
  \]
  as follows
  \begin{equation*}
   (\tilde{\mu} \otimes \mathcal{H}^{l-m}) (A) = \int_{\Lambda} \tilde{\mu}_{\lambda',x}(A)  \,d\big(\Psi (\tilde{\mu} \otimes \mathcal{H}^{l-m} \restr V^\bot)\big)(\lambda'), \ \ \text{ for $A \subset X \times \mathbb{B}_{s_0/16cL}(0)$ Borel},
\end{equation*}
  where $(\tilde{\mu}_{\lambda',x})_{\lambda \in \Lambda}$ is a Borel measurable family of probability measures, with $\tilde{\mu}_{\lambda',x}$ supported in $\Psi^{-1}(\lambda')$ for $\Psi(\tilde{\mu} \otimes \mathcal{H}^{l-m} \restr V^\bot)$-a.e. $\lambda' \in \Lambda$. By using that
  \[
  \Psi (\tilde{\mu} \otimes \mathcal{H}^{l-m} \restr V^\bot) = \Phi_\eta \tilde{\mu} \otimes \mathcal{H}^{l-m} \restr V^\bot,
  \]
  and the uniqueness of disintegration, one verifies that, for $\mathcal{H}^{l-m}$-a.e. $\eta \in V^\bot$, the family $(\tilde{\mu}_{\lambda',x})_{\lambda' \in (V + \eta)}$ is a disintegration for $\tilde{\mu}$. More precisely,
  \begin{equation}
   \tilde{\mu}(B) = \int_{V + \eta} \tilde{\mu}_{\lambda',x}(B)  \,d(\Phi_{\eta}\tilde{\mu})(\lambda'), \ \ \text{ for $B \subset X$ Borel}.
\end{equation}
 
By further decomposing the measure $\Phi_{\lambda}\tilde{\mu}$ as 
\[
\Phi_{\lambda}\tilde{\mu}= (\Phi_{\lambda}\tilde{\mu})^a \oplus (\Phi_{\lambda}\tilde{\mu})^s
\]
namely, its absolutely continuous and singular part with respect to $\mathcal{H}^m$, respectively, we can rewrite the above disintegration as 
\begin{align}
\label{e:dis99}
\tilde{\mu}(B) &= \int_{V + \lambda}  f_{\tilde{\mu},\lambda}(\lambda') \tilde{\mu}_{\lambda',x}(B) \, d\mathcal{H}^m(\lambda')\\
\nonumber
&+ \int_{V + \lambda} \tilde{\mu}_{\lambda',x}(B) \, d(\Phi_{\lambda}\tilde{\mu}^s)(\lambda'), \ \ \text{ for $B \subset X$ Borel},
\end{align}
where $f_{\tilde{\mu},\eta} \colon V+\eta \to [0,\infty)$ is a Borel function representing the density of $(\Phi_{\lambda}\tilde{\mu})^a$ with respect to $\mathcal{H}^m$. Furthermore, the functions 
$f_{\tilde{\mu},\eta}$ can be chosen so that the map 
$\lambda \mapsto f_{\tilde{\mu},\lambda^\bot}(\lambda)$ 
is Borel measurable in the product space. Indeed, one may take $f \colon \Lambda \to [0,\infty)$ a Borel representative 
of the density of 
$\Psi(\tilde{\mu} \otimes \mathcal{H}^{l-m} \restr V^\bot)^a$ such that 
\[
\Psi(\tilde{\mu} \otimes \mathcal{H}^{l-m} \restr V^\bot)^s(\{f >0\})=0.
\]
Setting 
$f_{\tilde{\mu},\lambda^\bot}(\lambda) := f(\lambda)$, 
the desired property follows immediately.

Eventually, we define \emph{for every} $\lambda \in \Lambda$ the measure $\mathring{\mu}_{\lambda,x}$ in $X$ as
\begin{equation}
\mathring{\mu}_{\lambda,x} := 
\begin{cases}
\label{e:defringmu}
  (f_{\tilde{\mu},\lambda^\bot}(\lambda)\,\tilde{\mu}^{\lambda}_x  &\text{ if $ f_{\tilde{\mu},\lambda^\bot}(\lambda) >0$}\\
 \tilde{\mu}^{\lambda}_x &\text{ if $f_{\tilde{\mu},\lambda^\bot}(\lambda) =0$}.
\end{cases}
\end{equation}
As a consequence, the family $(\mathring{\mu}_{\lambda,x})_{\lambda \in \Lambda}$ is Borel measurable.

The following representation for $\tilde{\mu}$ holds for $\mathcal{H}^{l-m}$-a.e. $\eta \in V^\bot$
\begin{equation}
    \label{e:dis11}
\tilde{\mu}(B)= \int_{V + \eta} \mathring{\mu}_{\lambda',x}(B) \, d(\mathcal{H}^m+(\Phi_{\eta}\tilde{\mu})^s)(\lambda'), \ \ \text{ for $B \subset X$ Borel}.
\end{equation}

\vspace{2mm}

\emph{-Step 2}. Now fix $\eta \in \mathbb{B}_{s_0/2cL}(0) \cap V^\bot$ such that the representation \eqref{e:dis11} holds. From \eqref{e:dis11} and the standard properties of disintegration, we know that for $\mathcal{H}^m$-a.e. $\lambda' \in V+\eta$
    \begin{align}
     \label{e:existwl}
         s^{-m}\tilde{\mu} \restr \Phi_{\eta}^{-1}\big(\mathbb{B}_s(\lambda')\big)  \rightharpoonup  \mathring{\mu}_{\lambda',x}, \ \ \text{weakly* in $X$ as $s \to 0^+$.}
    \end{align}
    
    Moreover, from \eqref{e:equivcone1234} we deduce that for $\mathcal{H}^m$-a.e. $\lambda' \in \mathbb{B}_{s_0/16cL}(0) \cap (V+\eta)$, for every but sufficiently small values of $s \in (0,1)$, it holds 
\begin{align}
\label{e:ineq12}
        \tilde{\mu} \restr  \Phi_{\eta}^{-1}\big(\mathbb{B}_{s/c}(\lambda')\big)  \leq \tilde{\mu} \restr X(x,\lambda',s) \leq  \tilde{\mu} \restr \Phi_{\eta}^{-1}\big(\mathbb{B}_{cs}(\lambda')\big), \ \ \text{ as measures in $X$}.
    \end{align}
  where, for the right hand-side of the above inequality, we also used that 
  \begin{equation}
  \label{e:cont99}
   X(x,\lambda',s) \subset X(x,\lambda_0,s_0/8c), \ \ \text{ for every sufficiently small $s \in (0,1)$},
  \end{equation}
  whenever $\lambda' \in \mathbb{B}_{s_0/16cL}(\lambda_0)$.
  
  Conditions \eqref{e:existwl} and \eqref{e:ineq12} imply that, for $\mathcal{H}^m$-a.e. $\lambda' \in \mathbb{B}_{s_0/16cL}(\lambda_0) \cap (V +\eta)$ any weak* limit point $\hat{\mu}^{\lambda'}_x$ in $X$ for the family $\big(s^{-m}\tilde{\mu} \restr X(x,\lambda',s)\big)_{s \in (0,1)}$ has to satisfy
    \begin{align}
        c^{-m}\int \varphi \, d\mathring{\mu}_{\lambda',x}&=\lim_{i \to \infty} s_i^{-m} \int \varphi \, d\big[\tilde{\mu} \restr \Phi_{\eta}^{-1}\big(\mathbb{B}_{s_i/c}(\lambda')\big)\big] \\
        &\leq  \lim_{i \to \infty} s_i^{-m}\int \varphi \, d\big[\tilde{\mu} \restr X(x,\lambda',s_i)\big] \\
       &=\lim_{i \to \infty} s_i^{-m} \int \varphi \, d\big[\tilde{\mu} \restr \Phi_{\eta}^{-1}\big(\mathbb{B}_{cs_i}(\lambda')\big)\big] \\
       &= c^m \int \varphi \, d\mathring{\mu}_{\lambda',x}, \ \ \text{ for every $\varphi \in C^0_c(X,[0,\infty))$}.
    \end{align}

As a consequence, the above chain of inequalities allows us to deduce that, for $\mathcal{H}^m$-a.e. $\lambda' \in \mathbb{B}_{s_0/16cL}(0) \cap (V + \eta)$, the conditions \eqref{e:boundcone3} and \eqref{e:boundcar} are satisfied with respect to $\tilde{\mu}$.
 
In order to extend this property to $\mathcal{L}^l$-a.e. $\lambda \in \mathbb{B}_{s_0/16cL}(0)$, we first note that the preceding argument applies for $\mathcal{H}^{l-m}$-a.e. choice of $\eta \in V^\bot$. Therefore, by Fubini's theorem, we are left to verify the $\mathcal{L}^l$-measurability of the set of $\lambda \in \mathbb{B}_{s_0/16cL}(0)$ satisfying \eqref{e:boundcone3} and \eqref{e:boundcar}. Indeed, consider the set of points $\lambda \in \mathbb{B}_{s_0/16cL}(0)$ such that
\begin{equation}\label{e:measring}
\lim_{s \to 0^+} s^{-m}\,\tilde{\mu} \restr \Phi_{ \lambda^\bot}^{-1}\big(\mathbb{B}_s(\lambda)\big)
\quad \text{exists weakly as a measure.}
\end{equation}
From the continuity of $\Phi_V$ in \eqref{e:contPsi}, it is not difficult to see that such set is $\mathcal{L}^l$-measurable. Since \eqref{e:existwl} holds for every 
$\eta \in \mathbb{B}_{s_0/16cL}(0) \cap V^\bot$, Fubini’s theorem implies that the family of $\lambda$ satisfying \eqref{e:measring} has full $\mathcal{L}^l$-measure. By the previous argument, each such $\lambda$ also satisfies \eqref{e:boundcone3} and \eqref{e:boundcar}. Hence, the set of $\lambda \in \mathbb{B}_{s_0/16cL}(0)$ for which \eqref{e:boundcone3} and \eqref{e:boundcar} hold but \eqref{e:measring} fails is $\mathcal{L}^l$-negligible. Consequently, the desired measurability follows.

\vspace{2mm}

\emph{-Step 3.} By virtue of the second step, for every $V \in \mathcal{V}_{l,m}$ there corresponds a family $(\mathring{\mu}_{V,\lambda})_{\lambda \in \Lambda}$ satisfying condition \eqref{e:boundcar} for every weak* limit point in $X$ for the family $\big( s^{-m}\tilde{\mu}_V \restr X(x,\lambda,s)\big)_{s \in (0,1)}$, where we set now
\[
\tilde{\mu}_V := \mu \restr [X(x,0,s_0/8c) \cap U(V)].
\]
Let us call $\Lambda_0 \subset \mathbb{B}_{s_0/16cL}(0)$ a Borel set of full $\mathcal{L}^l$-measure for which condition \eqref{e:boundcar} holds simultaneously for $\tilde{\mu}_V$, for every $V \in \mathcal{V}_{l,m}$. We further define the family of measures $(\mathring{\mu}_{\lambda,x})_{\lambda \in \Lambda}$ as
\begin{equation}
\label{e:defmuring}
\mathring{\mu}_{\lambda,x}:= \sum_{V \in \mathcal{V}_{l,m}} \mathring{\mu}^\lambda_{V,x}.
\end{equation}

Suppose now that $s_i \to 0^+$ satisfies $\hat{\mu}^\lambda_{x,s_i} \rightharpoonup \hat{\mu}^\lambda_{x}$ weakly* in $X$ as $i \to \infty$. By the previous step, $\mathcal{L}^l$-a.e. $\lambda \in \mathbb{B}_{s_0/16cL}(\lambda_0)$ satisfies
\[
\sup_{s \in (0,1)} s^{-m}\tilde{\mu}_V\big(X(x,\lambda,s)\big) < \infty, \ \ \text{ for every $V \in \mathcal{V}_{l,m}$}.
\]
 As a consequence, we can apply the Banach-Alaoglu theorem for the weak* convergence of measures on locally compact metric spaces to possibly pass to a not-relabeled subsequence $s'_i \to 0^+$ such that for every $\lambda \in \Lambda_0$ and every $V \in \mathcal{V}_{l,m}$ it holds
 \[
 s^{-m}\tilde{\mu}_V\big(X(x,\lambda,s)\big) \rightharpoonup \hat{\mu}^\lambda_{V,x}, \ \ \text{ weakly* in $X$ as $i \to \infty$}.
\]
Therefore, for every $\lambda \in \Lambda_0$, we can give the following estimate (recall also \eqref{e:cont99})
\begin{align*}
    c^{-m}\int \varphi \ d\mathring{\mu}_{\lambda,x} &\leq \lim_{i \to \infty} s_i^{-m} \sum_{V \in \mathcal{V}_{l,m}} \int \varphi \, d[\tilde{\mu}_V \restr X(x,\lambda,s_i)] \\
    &= \lim_{i \to \infty} s_i^{-m} \int \varphi \, d[\mu \restr X(x,\lambda,s_i)]  \\
    &= \int \varphi \, d\hat{\mu}_{\lambda,x} \\
     &= \lim_{i \to \infty} s_i^{-m} \int \varphi \, d[\mu \restr X(x,\lambda,s_i)]  \\
     &=\lim_{i \to \infty} s_i^{-m} \sum_{V \in \mathcal{V}_{l,m}} \int \varphi \, d[\tilde{\mu}_V \restr X(x,\lambda,s_i)] \\
     &\leq c^{m}\int \varphi \ d\mathring{\mu}_{\lambda,x}, \ \ \text{ for every $\varphi \in C^0_c(X;\mathbb{R})$}.
\end{align*}

From this property \eqref{e:boundcar} follows and the proof is concluded.
\end{proof}

\begin{proposition}
\label{p:disretr1}
    Let $(X,d)$ be a locally compact metric space, let $(\Pi_\lambda)_{\lambda \in \Lambda}$ be a transversal family of maps, let $\mu$ be a finite Borel measure on $X$, and let $x \in X$. Then, for $\mathcal{L}^l$-a.e. $\lambda \in \Lambda$ we have 
    \begin{equation}
    \label{e:locweak}
    \sup_{r >0} r^{-m} \mu\big( B \cap \Pi_\lambda^{-1}(B^m_r(\Pi_\lambda(x)))\big) < \infty, \ \ \text{ for every Borel set $B \Subset \mathbb{R}^n \setminus \{x\}$}.
    \end{equation}
       Moreover, given $\lambda_0 \in \Lambda$, for $\mathcal{L}^l$-a.e. $\lambda$ in an open ball centered at $\lambda_0$, any limit measure $\mu_{\lambda,x}$ obtained as
       \begin{equation}
       \label{e:locweak14}
           r_i^{-m} \mu\restr  \Pi_\lambda^{-1}(B^m_{r_i}(\Pi_\lambda(x))) \rightharpoonup \mu_{\lambda,x}, \ \ \text{locally weakly* in $\mathbb{R}^n \setminus \{x\}$ as $r_i \to 0^+$}.
       \end{equation}
       satisfies the following inequalities
    \begin{align}
         \label{e:equ21}
        c^{-m} \mathring{\mu}_{\lambda,x}(B) \leq \int_{B} d(x,x')^m \, d\mu_{\lambda,x}(x') \leq c^m \, \mathring{\mu}_{\lambda,x}(B), \ \ \text{for every Borel set $B \subset X$}, 
    \end{align}
   where $(\mathring{\mu}_{\lambda,x})_{\lambda \in \Lambda}$ is the family of measures given by Proposition \ref{p:fcv1}.
\end{proposition}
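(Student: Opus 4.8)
The plan rests on a single elementary geometric comparison: on a spherical shell $\{t\le d(x,\cdot)\le t'\}$ centred at $x$, the cylinder $\Pi_\lambda^{-1}(B^m_r(\Pi_\lambda(x)))$ and the cone $X(x,\lambda,s)$ essentially coincide once one matches $r$ with $st$; equivalently, the two families of sets are related by the Jacobian $d(x,\cdot)^m$. Precisely, if $t\le d(x,x')\le t'$, then $x'\in\Pi_\lambda^{-1}(B^m_r(\Pi_\lambda(x)))$ forces $x'\in X(x,\lambda,r/t)$ and $x'\in X(x,\lambda,r/t')$ forces $x'\in\Pi_\lambda^{-1}(B^m_r(\Pi_\lambda(x)))$, both being immediate from Definition~\ref{d:cone1} upon estimating $|\Pi_\lambda(x')-\Pi_\lambda(x)|$ against $d(x,x')$ via the two-sided bound on the latter. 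On a shell this gives
\[
X(x,\lambda,r/t')\cap\{t\le d(x,\cdot)\le t'\}\ \subset\ \Pi_\lambda^{-1}(B^m_r(\Pi_\lambda(x)))\cap\{t\le d(x,\cdot)\le t'\}\ \subset\ X(x,\lambda,r/t)\cap\{t\le d(x,\cdot)\le t'\}.
\]
Everything else consists in organising these inclusions and passing to the limit.

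First I would establish \eqref{e:locweak}. Fix $\lambda$ in the full-measure set of $\Lambda$ on which \eqref{e:boundcone3} of Proposition~\ref{p:fcv1} holds, let $B\Subset X\setminus\{x\}$, and put $a:=\mathrm{dist}(x,\overline B)>0$. For $0<r<a$ the inclusion above with $t=a$ gives $B\cap\Pi_\lambda^{-1}(B^m_r(\Pi_\lambda(x)))\subset X(x,\lambda,r/a)$, hence $r^{-m}\mu(B\cap\Pi_\lambda^{-1}(B^m_r(\Pi_\lambda(x))))\le a^{-m}\sup_{0<s<1}s^{-m}\mu(X(x,\lambda,s))$, while for $r\ge a$ the left-hand side is at most $a^{-m}\mu(X)$; taking the supremum over $r$ yields \eqref{e:locweak}. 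In particular the rescaled cylinder measures $r_i^{-m}\mu\restr\Pi_\lambda^{-1}(B^m_{r_i}(\Pi_\lambda(x)))$ are uniformly bounded on compacta of $X\setminus\{x\}$, so subsequential locally weak-$*$ limits $\mu_{\lambda,x}$ exist and the second assertion is non-vacuous.

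Next, for \eqref{e:equ21}, fix $\lambda_0$, pass to the open ball around it and to the full-measure subset of it on which Proposition~\ref{p:fcv1} provides the family $(\mathring{\mu}_{\lambda,x})$ together with \eqref{e:boundcar}; there $\mathring{\mu}_{\lambda,x}(\{x\})=0$, since these measures are obtained by disintegrating measures concentrated on the sets $U(V)\subset X\setminus\{x\}$. Fix such a $\lambda$ and a sequence $r_i\to0^+$ realising a limit $\mu_{\lambda,x}$, and test against an arbitrary $\varphi\in C^0_c(X\setminus\{x\};[0,\infty))$; set $K:=\mathrm{supp}\,\varphi$ and $0<a:=\mathrm{dist}(x,K)\le b:=\sup_{x'\in K}d(x,x')<\infty$. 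Given $\varepsilon\in(0,1)$, slice a neighbourhood of $[a,b]$ by a geometric sequence $t_0<\dots<t_N$ with $t_{k+1}/t_k\le1+\varepsilon$, take a continuous partition of unity $(\zeta_k)$ subordinate to $\{(t_{k-1},t_{k+1})\}_k$, and put $\varphi_k(x'):=\varphi(x')\,\zeta_k(d(x,x'))$, so that $\varphi=\sum_k\varphi_k$ with each $\varphi_k\in C^0_c(X\setminus\{x\};[0,\infty))$ supported in the shell $A_k:=\{t_{k-1}\le d(x,\cdot)\le t_{k+1}\}$. Applying the shell inclusions on $A_k$, integrating $\varphi_k$, multiplying by $r^{-m}$ and writing $r^{-m}=t_{k\pm1}^{-m}(r/t_{k\pm1})^{-m}$, then letting $r=r_i\to0$ along a subsequence — common to the finitely many indices $k$ — along which each $(r_i/t_{k\pm1})^{-m}\mu\restr X(x,\lambda,r_i/t_{k\pm1})$ converges weak-$*$ in $X$ (using \eqref{e:boundcone3} and Banach--Alaoglu) to a measure lying, by \eqref{e:boundcar}, between $c^{-m}\mathring{\mu}_{\lambda,x}$ and $c^m\mathring{\mu}_{\lambda,x}$, I obtain
\[
t_{k+1}^{-m}c^{-m}\!\int\varphi_k\,d\mathring{\mu}_{\lambda,x}\ \le\ \int\varphi_k\,d\mu_{\lambda,x}\ \le\ t_{k-1}^{-m}c^m\!\int\varphi_k\,d\mathring{\mu}_{\lambda,x}.
\]
Since $t_{k-1}^m\le d(x,x')^m\le t_{k+1}^m$ on $A_k$ and $t_{k+1}/t_{k-1}\le(1+\varepsilon)^2$, multiplying by $d(x,x')^m$, summing over $k$, and using $\sum_k\varphi_k=\varphi$ gives
\[
(1+\varepsilon)^{-2m}c^{-m}\!\int\varphi\,d\mathring{\mu}_{\lambda,x}\ \le\ \int\varphi(x')\,d(x,x')^m\,d\mu_{\lambda,x}(x')\ \le\ (1+\varepsilon)^{2m}c^m\!\int\varphi\,d\mathring{\mu}_{\lambda,x};
\]
letting $\varepsilon\to0$, and then upgrading from nonnegative $\varphi\in C^0_c(X\setminus\{x\})$ to Borel sets by the usual approximation of finite Borel measures on the locally compact metric space $X\setminus\{x\}$ — the right inequality already shows $d(x,\cdot)^m\mu_{\lambda,x}$ is finite — yields \eqref{e:equ21} for Borel $B\subset X\setminus\{x\}$, hence for every Borel $B\subset X$ since $\mathring{\mu}_{\lambda,x}(\{x\})=0$ and the weight vanishes at $x$.

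The proof is mostly organisational once the cylinder--cone dictionary is set up; the delicate point is the scale-matching — a cylinder of radius $r$ behaves, on the shell of radius $\approx t$, like a cone of aperture $\approx r/t$, which is exactly where the weight $d(x,\cdot)^m$ is forced upon us — performed on shells fine enough that the error factors $(1+\varepsilon)^{\pm2m}$ disappear in the limit, so that no constant worse than the sharp $c^{\pm m}$ of Proposition~\ref{p:fcv1} appears. Proposition~\ref{p:fcv1} enters twice: its mass bound \eqref{e:boundcone3} supplies the compactness to extract simultaneous weak-$*$ limits of the finitely many rescaled cones, and its estimate \eqref{e:boundcar} is then transferred to all of them at once.
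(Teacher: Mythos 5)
Your proof is correct and follows essentially the same route as the paper's: both arguments rest on the two inclusions comparing the cylinder $\Pi_\lambda^{-1}(B^m_r(\Pi_\lambda(x)))$ with cones $X(x,\lambda,s)$ on annuli around $x$, combined with the compactness and two-sided bound \eqref{e:boundcar} from Proposition~\ref{p:fcv1}. The only difference is organisational — you pass to the limit via a partition of unity on geometric shells with ratio $1+\varepsilon$, whereas the paper restricts the measures to annuli $B_{b^{i+1}}(x)\setminus\overline{B}_{b^i}(x)$ and lets $b\to 1^+$ — and this changes nothing of substance.
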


\begin{proof}[Proof of Proposition \ref{p:disretr1}]
It is sufficient to prove that the proposition holds for $\mathcal{L}^l$-a.e. $\lambda \in \mathbb{B}_{s_0/(16cL)}(\lambda_0)$, for any $\lambda_0 \in \Lambda$, where $s_0 \in (0,1)$, $L \geq 1$, and $c \geq 1$ are the constants given by Proposition \ref{p:equivcone}. 

For a given $\rho >0$, since 
\[
 \Pi_\lambda^{-1}(B^m_{\rho s}(\Pi_\lambda(x))) \setminus B_\rho(x)\subset  X(x,\lambda,s) \setminus B_\rho(x), \ \ \text{ for every $s \in (0,1)$},
\]
we have that
\begin{align}
\label{e:ine51}
   \frac{1}{s^m}\int_{\Pi_\lambda^{-1}(B^m_{\rho s}(\Pi_\lambda(x)))  } \varphi \, d\mu \leq \frac{1}{s^m}\int_{X(x,\lambda,s)} \varphi \, d\mu, \ \ \varphi \in C^0_c(X \setminus \overline{B}_\rho(x);[0,\infty)).
\end{align}
    
    By virtue of inequality \eqref{e:ine51} we infer for every $\rho >0$
    \begin{align}
    \label{e:ine15}
        \sup_{r \in (0,\rho)} r^{-m} \mu\big(\Pi_\lambda^{-1}(B^m_r(\Pi_\lambda(x))) \setminus \overline{B}_\rho(x))\big) \leq \sup_{s \in (0,1)} \frac{1}{\rho^m}  \frac{\mu(X(x,\lambda,s))}{s^m}.  
    \end{align}
    Then, property \eqref{e:boundcone3} allows us to infer the validity of \eqref{e:locweak}, thanks to the arbitrariness of $\rho > 0$ in \eqref{e:ine15}.

     By using again \eqref{e:ine51} and Proposition \ref{p:fcv1}, we infer for $\mathcal{L}^l$-a.e. $\lambda \in \mathbb{B}_{s_0/16cL}(\lambda_0)$ that every limit point $\mu_{\lambda,x}$ obtained as in \eqref{e:locweak14} satisfies
    \begin{equation}
         \label{e:ine161}
        \rho^m\mu_{\lambda,x} \leq c^m \mathring{\mu}_{\lambda,x}, \ \ \text{ as measures in $X \setminus \overline{B}_\rho(x)$, for every $\rho >0$}.
    \end{equation}
    Condition \eqref{e:ine161} implies in particular that, if we take a parameter $b >1$, then
    \begin{equation}
    \label{e:ine11}
       b^{im} \mu_{\lambda,x} \restr (B_{b^{i+1}}(x) \setminus \overline{B}_{b^i}(x)) \leq c^m \mathring{\mu}_{\lambda,x} \restr (B_{b^{i+1}}(x) \setminus \overline{B}_{b^i}(x)), \ \ \text{ for every $i \in \mathbb{Z}$}
    \end{equation}
    But this means that, by defining the measure $\phi_b$ as 
    \[
    \phi_b:= \sum_{i \in \mathbb{Z}}^\infty  b^{im} \mu_{\lambda,x} \restr (B_{b^{i+1}}(x) \setminus \overline{B}_{b^i}(x)), 
    \]
    then, by exploiting the fact that $\phi_b \to d(\cdot,x)^m \, \mu_{\lambda,x}$ locally strongly in the sense of measures in $X \setminus \{x\}$ as $b \to 1^+$, together with \eqref{e:ine11}, we finally infer that 
    \begin{equation}
    \label{e:ine31}
        \int_{X \setminus \{x\}}  \varphi(x') \,d(x',x)^m d\mu_{\lambda,x}(x') \leq c^m \int_{X \setminus \{x\}}  \varphi(x') \, d\mathring{\mu}_{\lambda,x}(x'), \ \ \varphi \in C^0_c(X \setminus \{x\};[0,\infty)).
    \end{equation}

    For a given $\rho >0$, since
    \[
    X(x,\lambda,s,\rho) \subset \Pi^{-1}_\lambda(B^m_{\rho s}(\Pi_\lambda(x))) \cap \overline{B}_\rho(x), \ \ \text{ for every $s \in (0,1)$},
    \]
     we have that
    \begin{align}
   \frac{1}{s^m}\int_{X(x,\lambda,s,\rho)} \varphi \, d\mu   \leq \frac{1}{s^m}\int_{\Pi_\lambda^{-1}(B^m_{\rho s}(\Pi_\lambda(x)))  } \varphi \, d\mu, \ \ \varphi \in C^0_c(B_\rho(x);[0,\infty)).
\end{align}
 Therefore, by virtue of Proposition \ref{p:fcv1} we can argue exactly as above, to infer that, if $b >1$ and $\mu^\lambda$ is a limit point, then for $\mathcal{L}^l$-a.e. $\lambda \in \Lambda$ it holds 
    \begin{equation}
    \label{e:ine21}
         c^{-m}\mathring{\mu}^\lambda \restr (B_{b^i}(x) \setminus \overline{B}_{b^{i-1}}(x)) \leq b^{im} \mu^\lambda \restr (B_{b^i}(x) \setminus \overline{B}_{b^{i-1}}(x)), \ \ \text{as measures, for every $i\in \mathbb{Z}$}
    \end{equation}
   Hence, by arguing once again as above, we can infer from \eqref{e:ine21} that for $\mathcal{L}^l$-a.e. $\lambda \in \Lambda$
    \begin{equation}
    \label{e:ine41}
 \int_{X \setminus \{x\}}  \varphi(x') \, d(x',x)^m d\mu_{\lambda,x}(x') \geq  c^{-m}\int_{X \setminus \{x\}}  \varphi(x') \,  d\mathring{\mu}_{\lambda,x}(x'), \ \ \varphi \in C^0_c(X \setminus \{x\};[0,\infty)),
    \end{equation}
    whenever $\mu_{\lambda,x}$ is a limit point obtained as in \eqref{e:locweak14}. By putting together \eqref{e:ine31} and \eqref{e:ine41} we conclude the proof of the proposition. 
\end{proof}

We are finally in position to prove the main result concerning the probabilistic injective projection property.

\begin{theorem}
\label{p:finite1}
     Let $(X,d)$ be a locally compact metric space, let $(\Pi_\lambda)_{\lambda \in \Lambda}$ be a transversal family of maps, and let $\mu$ be a finite Borel measure on $X$. Then, for every Borel set $\Sigma \subset \Lambda$, one of the following two conditions holds
\begin{enumerate}
   \item $\Pi_\lambda \colon X \to \mathbb{R}^m$ is injective on a set of full $\mu$-measure for $\mathcal{L}^l$-a.e. $\lambda \in \Sigma$ 
    \item  $\mathcal{L}^l(\{\lambda \in \Sigma \ |\ (\Pi_\lambda \mu)^a \neq 0\}) >0$ .
\end{enumerate}

\end{theorem}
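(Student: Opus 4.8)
The plan is to prove the dichotomy in the form: \emph{if alternative \eqref{e:alter2} fails, then \eqref{e:alter1} holds} (the two are not mutually exclusive). So assume $\mathcal{L}^l(\{\lambda\in\Sigma : (\Pi_\lambda\mu)^a\neq 0\}) = 0$, i.e.\ $\Pi_\lambda\mu\perp\mathcal{H}^m$ for $\mathcal{L}^l$-a.e.\ $\lambda\in\Sigma$. I will use throughout that, for a fixed $\lambda$, the map $\Pi_\lambda$ is injective on a set of full $\mu$-measure \emph{if and only if} the conditional measures $\mu_{\lambda,y}$ of the disintegration $\mu = \mu_{\lambda,y}\otimes\Pi_\lambda\mu$ are Dirac masses for $\Pi_\lambda\mu$-a.e.\ $y$: if $\mu_{\lambda,y} = \delta_{g(y)}$ for a Borel selection $g$ obtained from the disintegration, then $\Pi_\lambda(g(y)) = y$, $\mu$ is carried by the Borel set $\{x : g(\Pi_\lambda(x)) = x\}$, and $\Pi_\lambda$ is injective there; conversely, if $\Pi_\lambda$ is injective on a full-measure Borel set $A$ then $\mu_{\lambda,y}$ is supported on the at-most-one-point set $A\cap\Pi_\lambda^{-1}(y)$ for $\Pi_\lambda\mu$-a.e.\ $y$. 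Hence it suffices to show that, for $\mathcal{L}^l$-a.e.\ $\lambda\in\Sigma$, the measure $\mu_{\lambda,y}$ is Dirac for $\Pi_\lambda\mu$-a.e.\ $y$.

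The first step is a Fubini exchange. For every $x\in X$, Proposition~\ref{p:fcv1} furnishes an $\mathcal{L}^l$-null set $N(x) := \{\lambda\in\Lambda : \sup_{s\in(0,1)}s^{-m}\mu(X(x,\lambda,s)) = \infty\}$. Granting the joint $(\mathcal{L}^l\otimes\mu)$-measurability of $\{(\lambda,x) : \lambda\in N(x)\}$ (Appendix~B), Tonelli's theorem gives $(\mathcal{L}^l\otimes\mu)(\{(\lambda,x):\lambda\in N(x)\}) = \int_X\mathcal{L}^l(N(x))\,d\mu(x) = 0$, so for $\mathcal{L}^l$-a.e.\ $\lambda$ the set $G_\lambda := \{x : \lambda\notin N(x)\}$ has full $\mu$-measure. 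Fix now $\lambda\in\Sigma$ for which $G_\lambda$ has full $\mu$-measure and $\Pi_\lambda\mu\perp\mathcal{H}^m$; both hold for $\mathcal{L}^l$-a.e.\ $\lambda\in\Sigma$. Since $\Pi_\lambda\mu$ is a finite Radon measure on $\mathbb{R}^m$ singular with respect to $\mathcal{H}^m$, the density theorem for Radon measures gives $\limsup_{r\to 0^+}r^{-m}\mu(\Pi_\lambda^{-1}(B^m_r(y))) = +\infty$ for $\Pi_\lambda\mu$-a.e.\ $y$. Combining this with $\mu(X\setminus G_\lambda)=0$ and disintegrating, we obtain that for $\Pi_\lambda\mu$-a.e.\ $y$ both $\mu_{\lambda,y}(X\setminus G_\lambda)=0$ and the above limsup equals $+\infty$.

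Suppose, for contradiction, that the set of $y$ with $\mu_{\lambda,y}$ not Dirac has positive $\Pi_\lambda\mu$-measure. Then there is $y_0$ with $\mu_{\lambda,y_0}$ not Dirac, $\mu_{\lambda,y_0}(X\setminus G_\lambda)=0$, and $\limsup_{r\to 0^+}r^{-m}\mu(\Pi_\lambda^{-1}(B^m_r(y_0)))=+\infty$. As $\mu_{\lambda,y_0}$ is supported in the closed fiber $\Pi_\lambda^{-1}(y_0)$ and is not Dirac, its support contains two points $p_1\neq p_2$; fixing $\rho_0\in(0,d(p_1,p_2)/4)$, each ball $B_{\rho_0}(p_j)$ has positive $\mu_{\lambda,y_0}$-measure, hence meets $G_\lambda\cap\Pi_\lambda^{-1}(y_0)$, so I may pick $x_j\in B_{\rho_0}(p_j)\cap G_\lambda\cap\Pi_\lambda^{-1}(y_0)$, whence $d(x_1,x_2)>2\rho_0$. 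Choose $r_i\downarrow 0$ with $\nu_i := r_i^{-m}\,\mu\restr\Pi_\lambda^{-1}(B^m_{r_i}(y_0))$ satisfying $\nu_i(X)\to+\infty$ (the $\nu_i$ depend only on $y_0$). Now comes the key point, the conical bound: since $\Pi_\lambda^{-1}(B^m_{r}(y_0))\setminus\overline{B}_{\rho}(x_j)\subset X(x_j,\lambda,r/\rho)$ for $r<\rho$ and $x_j\in G_\lambda$, we get $\sup_i\nu_i(X\setminus\overline{B}_{\rho}(x_j)) \le \rho^{-m}\sup_{s\in(0,1)}s^{-m}\mu(X(x_j,\lambda,s)) < \infty$ for every $\rho>0$ and $j=1,2$. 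Applying this at $x_2$ (with $\rho=\rho_0/2$) together with $\nu_i(X)\to+\infty$ forces $\nu_i(\overline{B}_{\rho_0/2}(x_2))\to+\infty$, hence $\nu_i(B_{\rho_0}(x_2))\to+\infty$; but applying it at $x_1$ (with $\rho=\rho_0$) and using $B_{\rho_0}(x_2)\subset X\setminus\overline{B}_{\rho_0}(x_1)$ (valid as $d(x_1,x_2)>2\rho_0$) gives $\nu_i(B_{\rho_0}(x_2))\le\sup_i\nu_i(X\setminus\overline{B}_{\rho_0}(x_1))<\infty$ — a contradiction. Therefore $\mu_{\lambda,y}$ is Dirac for $\Pi_\lambda\mu$-a.e.\ $y$, i.e.\ $\Pi_\lambda$ is injective on a set of full $\mu$-measure; as this holds for $\mathcal{L}^l$-a.e.\ $\lambda\in\Sigma$, alternative \eqref{e:alter1} holds. (Equivalently, one may organize the contradiction around the limit measures $\mu_{\lambda,x}$ of Proposition~\ref{p:disretr1}: the relation \eqref{e:equ21} yields $\int d(x_j,\cdot)^m\,d\mu_{\lambda,x_j}\le c^m\mathring{\mu}_{\lambda,x_j}(X)<\infty$, so the mass of $\nu_i$ escaping in $X\setminus\{x_j\}$ must concentrate at the base point $x_j$, which cannot happen simultaneously for two distinct base points lying over the same $y_0$.)

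The main obstacle is the Fubini exchange of the second paragraph: one must verify that $(\lambda,x)\mapsto\sup_{s\in(0,1)}s^{-m}\mu(X(x,\lambda,s))$ — and, for the variant, that the property of $r^{-m}\mu\restr\Pi_\lambda^{-1}(B^m_r(\Pi_\lambda x))$ admitting a weak* limit obeying \eqref{e:equ21} — is $(\mathcal{L}^l\otimes\mu)$-measurable; this is precisely the kind of statement deferred to Appendix~B, and it rests on the continuity of $\Pi$ and of $(x',\lambda)\mapsto\Phi_{V,\lambda}(x')$ from Proposition~\ref{p:equivcone}. A secondary point requiring care is the selection of $y_0$ and of the two good points $x_1,x_2$ in its fiber: here one uses that $\mu_{\lambda,y}$ is supported in $\Pi_\lambda^{-1}(y)$ and that $G_\lambda$ has full $\mu_{\lambda,y}$-measure for $\Pi_\lambda\mu$-a.e.\ $y$, together with the elementary fact that a non-Dirac probability measure has at least two points in its support.
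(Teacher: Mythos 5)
Your argument is correct and follows essentially the same route as the paper's proof: pass to the contrapositive, translate almost-everywhere injectivity into the conditional measures $\mu_{\lambda,y}$ being Dirac, use Fubini (with the Appendix~B measurability) to fix a good $\lambda$ with singular projection and finite truncated conical densities at $\mu$-a.e.\ point, and derive a contradiction from two distinct points $x_1\neq x_2$ in a fiber over a $y_0$ of infinite $m$-density for $\Pi_\lambda\mu$. The only cosmetic difference is that you invoke the conical bound \eqref{e:boundcone3} of Proposition~\ref{p:fcv1} directly and inline the inclusion $\Pi_\lambda^{-1}(B^m_r(y_0))\setminus\overline{B}_\rho(x_j)\subset X(x_j,\lambda,r/\rho)$, whereas the paper routes this through the cylindrical bound \eqref{e:locweak} of Proposition~\ref{p:disretr1}, which is obtained from the same inclusion.
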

\begin{proof}
It is sufficient to prove that the proposition holds for $\mathcal{L}^l$-a.e. $\lambda \in \mathbb{B}_{s_0/(16cL)}(\lambda_0) \cap \Sigma$, for any $\lambda_0 \in \Lambda$, where $s_0 \in (0,1)$, $L > 0$, and $c \geq 1$ are the constants given by Proposition \ref{p:equivcone}. Fix $\lambda_0 \in \Lambda$ and, in order to ease the notation, let us denote the ball $\mathbb{B}_{s_0/(16cL)}(\lambda_0)$ simply by $\Lambda$. 

 We will prove that, if a measure $\mu$ does not satisfies (2), namely, if 
\begin{equation}
\label{e:singprop7}
\mathcal{L}^l(\{\lambda \in \Sigma \ |\ (\Pi_\lambda \mu)^a \neq 0\}) =0,
\end{equation}
then $\mu$ satisfies condition (1). 

To this purpose, we start with the following observation: $\Pi_\lambda$ is injective on a set of full $\mu$-measure if and only if  
\begin{equation}
\label{e:supp=1}
\Pi_\lambda\mu(\{y \in \mathbb{R}^m \ | \ \mathcal{H}^0(\text{supp} \, \mu_{\lambda,y}) \geq 2\})=0,
\end{equation}
where $(\mu_{\lambda,y})_{y \in \mathbb{R}^m}$ are the probability measures given by disintegrating $\mu$ with respect to $\Pi_\lambda \colon X \to \mathbb{R}^m$:
\begin{equation}
\label{e:displa}
\mu(B)= \int_{\mathbb{R}^m} \mu_{\lambda,y}(B) \, d(\Pi_\lambda\mu)(y), \ \ \text{ for every $B \subset X$ Borel}.
\end{equation}
Indeed, if $\Pi_\lambda$ is injective on a Borel set $E \subset X$ of full $\mu$-measure, then $\mathcal{H}^0(\Pi_\lambda^{-1}(y) \cap E)=1$ for every $y \in \Pi_\lambda(E)$. For every $y \in \Pi_\lambda(E)$ let us denote by $x_y$ the point belonging to $\Pi_\lambda^{-1}(y) \cap E$. By exploiting the fact that $\mu = \mu \restr E$, from formula \eqref{e:displa} we deduce that
\[
0=\mu_{\lambda,y}(\Pi_\lambda^{-1}(y) \setminus E) = \mu_{\lambda,y}(\Pi_\lambda^{-1}(y) \setminus \{x_y\}), \ \ \text{for $\Pi_\lambda \mu$-a.e. $y \in \mathbb{R}^m$},
\]
which immediately implies \eqref{e:supp=1}. Assume instead that \eqref{e:supp=1} holds. Then, by considering the Borel set $E$ defined as
\[
E:= \{x \in X \ | \  x \in \mu_{\lambda,\Pi_\lambda(x)}(\{x\}) >0 \},
\]
formula \eqref{e:displa} tells us that $E$ has full $\mu$-measure. Moreover, by using again \eqref{e:supp=1}, we also have that $\mathcal{H}^0(\Pi_\lambda^{-1}(y) \cap E)=1$ for every $\Pi_\lambda\mu$-a.e. $y \in \mathbb{R}^m$. Therefore, $\Pi_\lambda$ is injective on a Borel set of full $\mu$-measure. The claim is thus proved.

Now we argue by contradiction: assume that $\mu$ satisfies the singular projection property \eqref{e:singprop7} and at the same time the set $\Sigma' $ defined as
\[
\Sigma':= \{\lambda \in  \Sigma \ |\  \Pi_\lambda\mu(\{y \in \mathbb{R}^m \ | \ \mathcal{H}^0(\text{supp} \, \mu_{\lambda,y}) \geq 2\})>0\}  \}
\]
satisfies $\mathcal{L}^l(\Sigma') >0$.

We apply Fubini's theorem on the set of points $(x,\lambda ) \in X \times \Lambda$ satisfying the statement of Proposition \ref{p:disretr1} (see Appendix \ref{a:measprod}) to infer that, for $\mathcal{L}^{l}$-a.e. $\lambda \in \Lambda$, for $\mu$-a.e. $x \in X$ condition \eqref{e:locweak} holds true, namely,
\begin{equation}
     \label{e:locweak135}
    \sup_{r >0} r^{-m} \mu\big( B \cap \Pi_\lambda^{-1}(B^m_r(\Pi_\lambda(x)))\big) < \infty, \ \ \text{ for every Borel set $B \Subset \mathbb{R}^n \setminus \{x\}$}.
\end{equation}

We observe that, regardless of the measurability of the set $\Sigma'$, condition $\mathcal{L}^l(\Sigma') >0$ together with the assumption \eqref{e:singprop7} and the property \eqref{e:locweak135}, implies that 
\[
\Sigma' \cap \{\lambda \in \Sigma \ | \ (\Pi_\lambda \mu)^a = 0 \ \text{ and \eqref{e:locweak135} holds} \} \neq \emptyset. 
\]
If not, since the set 
\[
\Sigma'' := \{\lambda \in \Sigma \mid (\Pi_\lambda \mu)^a = 0 \ \text{and \eqref{e:locweak135} holds}\}
\]
is $\mathcal{L}^l$-measurable, we have $\mathcal{L}^l(\Sigma \setminus \Sigma'') = 0$. 
Hence, we would obtain 
\[
\Sigma' \subset \Sigma \setminus \Sigma''
\quad \text{and} \quad
\mathcal{L}^l(\Sigma') = 0,
\]
which yields a contradiction. As a consequence, there exists 
$\lambda \in \Sigma$ such that
\begin{enumerate}[(i)]
\item $(\Pi_\lambda\mu)^a =0$
\item $ \Pi_\lambda\mu(\{y \in \mathbb{R}^m \ | \ \mathcal{H}^0(\text{supp}\, \mu_{\lambda,y}) \geq 2\})>0$ 
\item for $\mu$-a.e. $x \in X$ \eqref{e:locweak135} holds.
\end{enumerate}

 We claim that (i) is in contradiction with (ii) and (iii). Indeed, condition (i) implies by virtue of the Radon-Nykodym differentiation Theorem that 
 \begin{equation}
\label{e:cond1.2}
 \lim_{r \to 0^+} r^{-m}\Pi_\lambda\mu(B^{m}_r(y))=\infty, \ \ \text{ for $\Pi_\lambda\mu$-a.e. $y \in \mathbb{R}^m$}.
 \end{equation}
 Moreover, property (iii) tells us that, the $\mu$-measurable set (see Appendix \ref{a:measprod}) $E_0 \subset  \mathbb{R}^n$ made of points $x$ fulfilling condition \eqref{e:locweak135} for $\mathcal{L}^l$-a.e. $\lambda$ has full $\mu$-measure. By using the inner regularity of finite Borel regular measures, we find a $\sigma$-compact set $K_0 \subset E_0$ of full $\mu$-measure. In particular, by means of formula \eqref{e:displa}, we deduce that
 \[
 K_0 \cap \Pi_\lambda^{-1}(y) \text{ has full $\mu_{\lambda,y}$-measure, for $\Pi_\lambda\mu$-a.e. $y \in \mathbb{R}^m$}.
 \]
Combining this information with \eqref{e:cond1.2} and (ii), we infer the existence of $y_0 \in \mathbb{R}^m$ and of two points $x_1 \neq x_2$ such that
\begin{equation}
\label{e:cond1.2.67}
x_1,x_2 \in \Pi_\lambda^{-1}(y_0) \cap K_0 \quad \text{and} \quad \lim_{r \to 0^+} r^{-m}\Pi_\lambda\mu(B^{m}_r(y_0))=\infty.
\end{equation}
In particular, condition \eqref{e:locweak135} is fulfilled at both points $x_1$ and $x_2$.
Therefore, by setting $0<\delta:= |x_2 -x_1|/4$, since
\[
\Pi_\lambda^{-1}(B^{m}_{r}(y_0)) \subset [ \Pi_\lambda^{-1}(B^{m}_{r}(\Pi_\lambda(x_1)) \setminus B_\delta(x_1)] \cup  [\Pi_\lambda^{-1}(B^{m}_{r}(\Pi_\lambda(x_2)) \setminus B_\delta(x_2)],
\]
we can estimate
\begin{align}
\nonumber
\limsup_{r \to 0^+} \, r^{-m}\Pi_\lambda\mu(B^{m}_{r}(y_0)) &= \limsup_{r \to 0^+}\, r^{-m}\mu\big( \Pi_\lambda^{-1}(B^{m}_{r}(y_0))\big) \\
&\leq \limsup_{r \to 0^+}\, r^{-m}\mu\big( \Pi_\lambda^{-1}(B^{m}_{r}(\Pi_\lambda(x_1))) \setminus B_\delta(x_1)\big) \\
&+\limsup_{r \to 0^+}\, r^{-m}\mu\big( \Pi_\lambda^{-1}(B^{m}_{r}(\Pi_\lambda(x_2))) \setminus B_\delta(x_2)\big) \\
&< \infty.
\end{align}

This contradicts the right hand-side of \eqref{e:cond1.2.67} and the claim is proved.    
\end{proof}

\section{Besicovitch-Federer projection theorem for measures}
\label{s:rvs}

We start with a proposition which tells us that, under certain circumstances, the measures $(\mathring{\mu}_{\lambda,x})_{x \in X}$ in Proposition \ref{p:disretr1} can be related to the measures $(\mu_{\lambda,y})_{y \in \mathbb{R}^m}$ obtained by disintegrating $\mu$ with respect to the map $\Pi_\lambda$.

\begin{proposition}
\label{p:finite}
    Let $(X,d)$ be a locally compact metric space, let $(\Pi_\lambda)_{\lambda \in \Lambda}$ be a transversal family of maps, and let $\mu$ be a finite Borel measure on $X$. Assume also that $\Pi_\lambda \mu^a \neq 0$ for every $\lambda \in \Sigma$ for some Borel set $\Sigma \subset \Lambda$ with $\mathcal{L}^l(\Sigma)>0$. Then, there exists a $(\mu  \otimes \mathcal{L}^l)$-measurable set $A \subset X \times \Lambda$ such that $(\mu \otimes \mathcal{L}^l)(A) >0$ and such that for $\mu$-a.e. $x \in X$ it holds  
    \begin{equation}
    \label{e:finite19}
    \mu^\lambda_{x,r} :=  \mu \restr \Pi_\lambda^{-1}\big(B_r^m(\Pi_\lambda(x))\big) \rightharpoonup \mu_{\lambda,x}, \ \ \text{ weakly in $X$ as $r \to 0^+$},
    \end{equation}
    for $\mathcal{L}^l$-a.e. $\lambda \in A_x$. 
\end{proposition}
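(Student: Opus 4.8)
The plan is to produce a joint ``good set'' $A\subset X\times\Sigma$ on which the limit in \eqref{e:finite19} can be read off directly from the disintegration $\mu=\mu_{\lambda,y}\otimes\Pi_\lambda\mu$, and then to verify by Fubini's theorem that $A$ has positive $\mu\otimes\mathcal{L}^l$-measure, precisely because the absolutely continuous parts $(\Pi_\lambda\mu)^a$ are nonzero on the positive-measure set $\Sigma$. Throughout, $\mu^\lambda_{x,r}$ denotes the rescaled slice $r^{-m}\,\mu\restr\Pi_\lambda^{-1}(B^m_r(\Pi_\lambda(x)))$ already considered in Proposition~\ref{p:disretr1}.

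First I would compute the limit for a fixed $\lambda\in\Sigma$. Writing $z:=\Pi_\lambda(x)$ and using that each $\mu_{\lambda,y}$ is concentrated on the fibre $\Pi_\lambda^{-1}(y)$, the disintegration formula \eqref{e:disintegration0} gives, for every $\varphi\in C^0_c(X)$,
\[
\int_X\varphi\,d\mu^\lambda_{x,r}=\frac{1}{r^m}\int_{B^m_r(z)}g_\varphi\,d(\Pi_\lambda\mu)=\frac{(g_\varphi\,\Pi_\lambda\mu)(B^m_r(z))}{r^m},\qquad g_\varphi(y):=\int_X\varphi\,d\mu_{\lambda,y},
\]
where $g_\varphi$ is a bounded Borel function ($\|g_\varphi\|_\infty\le\|\varphi\|_\infty$, since the $\mu_{\lambda,y}$ are probabilities), so $g_\varphi\,\Pi_\lambda\mu$ is a finite signed Radon measure on $\mathbb{R}^m$. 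Now I would invoke the Lebesgue differentiation theorem for Radon measures on $\mathbb{R}^m$ (relative to $\mathcal{H}^m$): for $\mathcal{H}^m$-a.e.\ $z$, the quantity $r^{-m}(g_\varphi\,\Pi_\lambda\mu)(B^m_r(z))$ has a finite limit as $r\to0^+$, equal to $\omega_m\,g_\varphi(z)\rho_\lambda(z)$, where $\rho_\lambda$ is the $\mathcal{H}^m$-density of $(\Pi_\lambda\mu)^a$ and $\omega_m:=\mathcal{H}^m(B^m_1)$; applying the same theorem to $\Pi_\lambda\mu$ itself shows that $\mu^\lambda_{x,r}(X)=r^{-m}\Pi_\lambda\mu(B^m_r(z))$ stays bounded as $r\to0^+$ for $\mathcal{H}^m$-a.e.\ $z$. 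Running this over a countable dense family of $\varphi\in C^0_c(X)$ and using the resulting uniform mass bound, one concludes that for $\mathcal{H}^m$-a.e.\ $z$ the full weak limit $\mu^\lambda_{x,r}\rightharpoonup\mu_{\lambda,x}$ in \eqref{e:finite19} exists; if in addition $\rho_\lambda(z)>0$, the limit equals $\omega_m\rho_\lambda(z)\,\mu_{\lambda,z}$, a positive multiple of the conditional measure, which is the identification announced in the introduction. Let $G_\lambda\subset\mathbb{R}^m$ be the Borel set of such $z$.

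Next I would produce $A$. Since $\mathbb{R}^m\setminus G_\lambda$ is contained in the union of an $\mathcal{H}^m$-null set and $\{\rho_\lambda=0\}$, and $(\Pi_\lambda\mu)^a\ll\mathcal{H}^m$ with $(\Pi_\lambda\mu)^a(\{\rho_\lambda=0\})=0$, we get $(\Pi_\lambda\mu)^a(\mathbb{R}^m\setminus G_\lambda)=0$, hence $\Pi_\lambda\mu(G_\lambda)\ge(\Pi_\lambda\mu)^a(\mathbb{R}^m)>0$ for every $\lambda\in\Sigma$ by hypothesis. Set $A:=\{(x,\lambda)\in X\times\Sigma:\Pi_\lambda(x)\in G_\lambda\}$, so that \eqref{e:finite19} holds at every point of $A$ and $A_x=\{\lambda\in\Sigma:\Pi_\lambda(x)\in G_\lambda\}$. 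Granting (this is where Appendix~\ref{a:measprod} enters) that $\{(z,\lambda):z\in G_\lambda\}$ is Borel in $\mathbb{R}^m\times\Lambda$, so that $A$ is $(\mu\otimes\mathcal{L}^l)$-measurable with $\mu$-measurable sections $\Pi_\lambda^{-1}(G_\lambda)$ and $\lambda\mapsto\Pi_\lambda\mu(G_\lambda)$ $\mathcal{L}^l$-measurable, Fubini's theorem yields
\[
(\mu\otimes\mathcal{L}^l)(A)=\int_\Sigma\mu\big(\Pi_\lambda^{-1}(G_\lambda)\big)\,d\mathcal{L}^l(\lambda)=\int_\Sigma\Pi_\lambda\mu(G_\lambda)\,d\mathcal{L}^l(\lambda)\ge\int_\Sigma(\Pi_\lambda\mu)^a(\mathbb{R}^m)\,d\mathcal{L}^l(\lambda)>0,
\]
the last inequality because the integrand is strictly positive on $\Sigma$ and $\mathcal{L}^l(\Sigma)>0$. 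Since \eqref{e:finite19} holds at every point of $A$, it holds in particular for $\mu$-a.e.\ $x$ and $\mathcal{L}^l$-a.e.\ $\lambda\in A_x$, as required.

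The main obstacle I anticipate is the joint measurability feeding the definition of $A$: one needs the density $\rho_\lambda$, the singular mass ratios $r^{-m}(\Pi_\lambda\mu)^s(B^m_r(z))$, and hence the Lebesgue-point set $G_\lambda$, to depend measurably on $\lambda$, and $\chi_{G_\lambda}\circ\Pi_\lambda$ to be jointly measurable on $X\times\Lambda$. This is exactly the kind of statement collected in Appendix~\ref{a:measprod}; everything else (the disintegration identity, the Lebesgue differentiation theorem on $\mathbb{R}^m$, and Fubini's theorem) is routine.
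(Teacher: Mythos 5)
Your proposal is correct and follows essentially the same route as the paper: both define $A$ as the preimage under $(x,\lambda)\mapsto\Pi_\lambda(x)$ of a jointly measurable set of Lebesgue points of the projected measure on which the blow-ups converge, and both obtain $(\mu\otimes\mathcal{L}^l)(A)>0$ from Fubini's theorem together with the positivity of $(\Pi_\lambda\mu)^a(\mathbb{R}^m)$ on $\Sigma$, deferring the joint measurability to an appendix-type argument. The only (harmless) differences are that you spell out the Lebesgue-differentiation computation that the paper asserts as a standard property of disintegrations, and that your identification of the limit correctly carries the normalization factor $\omega_m\,\rho_\lambda(\Pi_\lambda(x))$ in front of $\mu_{\lambda,\Pi_\lambda(x)}$, which the paper's formula \eqref{e:cond1} suppresses.
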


\begin{remark}
\label{r:uniquelp}
    For those pairs $(x,\lambda)$ for which the above proposition holds, the convergence in \eqref{e:locweak14} can be strengthened from locally weak convergence in $X \setminus \{x\}$ to weak convergence in the whole of $X$, with a unique limit point $\mu_{\lambda,x}$. Consequently, in this case we also deduce that $\mu_{\lambda,x}$ is a finite measure on $X$. 
\end{remark}

\begin{proof}[Proof of Proposition \ref{p:finite}]
Define for every $\lambda \in \Lambda$ the set $Y_\lambda \subset \mathbb{R}^m$ as
\[
Y_\lambda := \{y \in \mathbb{R}^m \ | \ y \text{ is a non-null Lebesgue point for $\Pi_\lambda \mu^a$} \},
\]
where, with non-null Lebesgue point we intend a point $y_0 \in \mathbb{R}^m$ such that there exists $a >0$
\[
\lim_{r \to 0^+} \frac{1}{r^m} \int_{B^m_r(y_0)} |f_\lambda(y)- a| \, d\mathcal{H}^m(y)=0, \qquad (\Pi_\lambda \mu)^a = f_\lambda \, \mathcal{H}^m.
\]
The set $A \subset X \times \Lambda$ is then defined as
\[
A := \{(x,\lambda) \in X \times \Lambda \ | \ \Pi_\lambda(x) \in Y_\lambda  \}.
\]
Thanks to the fact that, by assumption, $\mathcal{L}^l(\Sigma)>0$, we see that 
\[
(\mu \otimes \mathcal{L}^l)(A) \geq  \int_{\Sigma}\mu(A_\lambda) \, d\mathcal{L}^l(\lambda) = \int_{\Sigma}\Pi_\lambda\mu^a(Y_\lambda) \, d\mathcal{L}^l(\lambda) >0
\]

    Given $\lambda \in \Lambda$, for $\mathcal{H}^m$-a.e. $y \in Y_\lambda$ it holds 
    \begin{equation}
    \label{e:cond1}
       \frac{\mu \restr \Pi^{-1}_\lambda(B^m_r(y))}{r^m } \rightharpoonup \mu_{\lambda,y}, \ \ \text{ weakly in $X$ as $r \to 0^+$},
    \end{equation}
    where $(\mu_{\lambda,y})_{y \in \mathbb{R}^m}$ are the probability measures given by disintegrating $\mu$ with respect to $\Pi_\lambda$ as in \eqref{e:displa}.
    Since by definition $\Pi_\lambda (\mu \restr A_\lambda) \ll \mathcal{H}^m$, from \eqref{e:cond1} we deduce that for $\mu$-a.e. $x \in A_\lambda$ it holds
    \begin{equation}
    \label{e:cond2}
       \frac{\mu \restr \Pi^{-1}_\lambda(B^m_r(\Pi_\lambda(x)))}{r^m } \rightharpoonup \mu_{\lambda,\Pi_\lambda(x)}, \ \ \text{ weakly in $X$ as $r \to 0^+$}.
    \end{equation}
    Since the set $\{(y,\lambda) \in \mathbb{R}^m \times \Lambda \ |\  y \in Y_\lambda \}$ is Borel (see Appendix \ref{a:measYlambda}), it follows that the set $A$ is $(\mu \otimes \mathcal{L}^l)$-measurable. We can apply Fubini's theorem to infer from \eqref{e:cond2} that for $\mu$-a.e. $x \in X$ it holds for $\mathcal{L}^l$-a.e. $\lambda \in A_x$ that
    \begin{equation}
    \label{e:cond3}
       \mu^\lambda_{x,r}=\frac{\mu \restr \Pi^{-1}_\lambda(B^m_r(\Pi_\lambda(x)))}{r^m } \rightharpoonup \mu_{\lambda,\Pi_\lambda(x)}, \ \ \text{ weakly in $X$ as $r \to 0^+$}.
    \end{equation}
    By setting $\mu_{\lambda,x}:= \mu_{\lambda,\Pi_\lambda(x)}$, this gives \eqref{e:finite19} and concludes the proof. 
\end{proof}

\begin{remark}
\label{r:coincidence}
    We notice that the proof of the above proposition tells us in particular that the unique limit points $\mu_{\lambda,x}$ in \eqref{e:finite19} coincides with $\mu_{\lambda,\Pi_\lambda(x)}$ for $\mu \otimes \mathcal{L}^l$-a.e. $(x,\lambda) \in A$.  
\end{remark}

The next is the key two alternatives lemma for measures which project with a non-trivial absolutely continuous part.

\begin{lemma}
\label{l:keyalternative}
    Let $(X,d)$ be a locally compact metric space, let $(\Pi_\lambda)_{\lambda \in \Lambda}$ be a transversal family of maps, and let $\mu$ be a finite Borel measure on $X$. By letting $A \subset X \times \Lambda$ be the measurable set given by Proposition \ref{p:finite}, then, for $\mu$-a.e. $x \in X$ the following condition 
    \begin{equation}
        \lim_{\delta \to 0^+} \limsup_{s \to 0+} \sup_{0 < \rho <\delta} \frac{\mu(X(x,\lambda,s,\rho) )}{(s\rho)^m} =0,
    \end{equation}
    is satisfied for $\mathcal{L}^l$-a.e. $\lambda \in A_x$.
\end{lemma}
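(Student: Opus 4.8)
\emph{Plan of proof.} The strategy is to fix a ``good'' pair $(x,\lambda)$, to compare the truncated cone $X(x,\lambda,s,\rho)$ with the cylindrical slab $\Pi_\lambda^{-1}(B^m_{s\rho}(\Pi_\lambda(x)))\cap\overline B_\rho(x)$, and then to feed in Propositions~\ref{p:fcv1}--\ref{p:finite}; the decisive point is that the Jacobian weight $d(x,\cdot)^m$ appearing in \eqref{e:intrjac}--\eqref{e:equ21} annihilates the atom that $\mu_{\lambda,x}$ may carry at $x$.

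\emph{Step 1 (reduction).} By Fubini (cf.\ Appendix~\ref{a:measprod}) it is enough to establish the conclusion for $\mu\otimes\mathcal L^l$-a.e.\ $(x,\lambda)\in A$. For such pairs we may assume at once: $\mu^\lambda_{x,r}\rightharpoonup\mu_{\lambda,x}$ weakly in $X$ with $\mu_{\lambda,x}$ finite (Proposition~\ref{p:finite}, Remark~\ref{r:uniquelp}); the family $(\mathring\mu_{\lambda,x})$ of Proposition~\ref{p:fcv1} is finite and bounds every weak$^*$ limit of $s^{-m}\mu\restr X(x,\lambda,s)$ from above by $c^m\mathring\mu_{\lambda,x}$; and \eqref{e:equ21} holds. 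Since $d(x,x)^m=0$, \eqref{e:equ21} forces $\mathring\mu_{\lambda,x}(\{x\})=0$, and finiteness of $\mu_{\lambda,x}$ gives $\mathring\mu_{\lambda,x}(\overline B_\rho(x))\le c^m\!\int_{\overline B_\rho(x)\setminus\{x\}}\!d(x,x')^m\,d\mu_{\lambda,x}(x')\le c^m\rho^m\,\mu_{\lambda,x}(\overline B_\rho(x)\setminus\{x\})$, whose right-hand side tends to $0$ as $\rho\to0$; also $\mu(\{x\})=0$, since otherwise $\mu^\lambda_{x,r}$ could not converge to a finite measure.

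\emph{Step 2 (one scale).} As $X(x,\lambda,s,\rho)\subset\overline B_\rho(x)$ and $\overline B_\rho(x)$ is compact for $\rho$ small, Step~1 and the upper bound on weak$^*$ limits yield, for each fixed $\rho$,
\[
\limsup_{s\to0^+}\frac{\mu(X(x,\lambda,s,\rho))}{(s\rho)^m}\le\frac{c^m\,\mathring\mu_{\lambda,x}(\overline B_\rho(x))}{\rho^m}\le c^{2m}\,\mu_{\lambda,x}\bigl(\overline B_\rho(x)\setminus\{x\}\bigr)\xrightarrow[\ \rho\to0\ ]{}0 .
\]

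\emph{Step 3 (supremum over scales).} Fix $\delta>0$ and write $\sup_{0<\rho<\delta}=\sup_{\rho\in[\rho_*,\delta]}\vee\sup_{0<\rho<\rho_*}$ for an auxiliary $\rho_*\in(0,\delta)$. On the \emph{compact} interval $[\rho_*,\delta]$, Step~2 upgrades to a uniform estimate by a Dini-type argument: $\rho\mapsto\mu(X(x,\lambda,s)\cap\overline B_\rho(x))$ and $\rho\mapsto\mathring\mu_{\lambda,x}(\overline B_\rho(x))$ are non-decreasing, the latter right-continuous, so partitioning $[\rho_*,\delta]$ at continuity points of $\mathring\mu_{\lambda,x}(\overline B_{\cdot}(x))$ and combining finitely many instances of Step~2 gives $\limsup_{s\to0^+}\sup_{\rho\in[\rho_*,\delta]}\frac{\mu(X(x,\lambda,s,\rho))}{(s\rho)^m}\le c^{2m}\mu_{\lambda,x}(\overline B_\delta(x)\setminus\{x\})$, which $\to0$ as $\delta\to0$. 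The delicate term is $\sup_{0<\rho<\rho_*}$: here the slab estimate $\frac{\mu(X(x,\lambda,s,\rho))}{(s\rho)^m}\le\mu^\lambda_{x,s\rho}(\overline B_{\rho_*}(x))$ only bounds the $\limsup$ by $\mu_{\lambda,x}(\overline B_{\rho_*}(x))$, whose limit $\mu_{\lambda,x}(\{x\})$ need not vanish (for instance when $\mu$ is carried by an $m$-plane transverse to the fibres). One therefore integrates in $\lambda$: by the disintegration $\mu\restr U=\int\mathring\mu_{\lambda',x}\,d\bar\gamma(\lambda')$ near $x$ constructed in the proof of Proposition~\ref{p:fcv1} together with the cone--preimage identification of Proposition~\ref{p:equivcone}, one has $\mu(X(x,\lambda,s,\rho))\le\int_{\mathbb B_{cs}(\lambda)}\mathring\mu_{\lambda',x}(\overline B_\rho(x))\,d\bar\gamma(\lambda')$; since $\bar\gamma(\mathbb B_{cs}(\lambda))\lesssim s^m$ while $\int\mathring\mu_{\lambda',x}(\overline B_{\rho_*}(x))\,d\bar\gamma(\lambda')=\mu(\overline B_{\rho_*}(x))$, a Lebesgue differentiation / Vitali covering argument \emph{in the parameter $\lambda$} bounds $\int_{A_x}\sup_{0<\rho<\rho_*}\frac{\mu(X(x,\lambda,s,\rho))}{(s\rho)^m}\,d\mathcal L^l(\lambda)$ by a quantity controlled by $\mu(\overline B_{\rho_*}(x))$, uniformly for small $s$. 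Letting $s\to0$ and then $\rho_*\to0$ this integral tends to $0$, because $\mu(\overline B_{\rho_*}(x))\to\mu(\{x\})=0$; together with the $[\rho_*,\delta]$ estimate and Fatou's lemma this gives the assertion for $\mathcal L^l$-a.e.\ $\lambda\in A_x$.

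\emph{Main obstacle.} The hard part is precisely the small-scale regime $0<\rho<\rho_*$ in Step~3: the pointwise-in-$\lambda$ (slab) estimate is irreducibly lossy, so one must average over the parameter, and this is viable only because the Jacobian-weighted masses $\lambda\mapsto\int d(x,\cdot)^m\,d\mu_{\lambda,x}=\mathring\mu_{\lambda,x}(X)$ are integrable, whereas the total masses $\lambda\mapsto\mu_{\lambda,x}(X)$ are not.
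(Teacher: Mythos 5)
Your Steps~1--2 and the treatment of $\rho\in[\rho_*,\delta]$ in Step~3 are sound and reproduce genuine ingredients of the argument (the slab inclusion, the upper bound \eqref{e:boundcar} on weak$^*$ limits, and the fact that the Jacobian weight in \eqref{e:equ21} kills the atom of $\mu_{\lambda,x}$ at $x$). You also correctly locate the crux in the regime $0<\rho<\rho_*$. But the resolution you propose there does not work. First, the claimed uniform-in-$s$ bound
\[
\int_{A_x}\sup_{0<\rho<\rho_*}\frac{\mu(X(x,\lambda,s,\rho))}{(s\rho)^m}\,d\mathcal L^l(\lambda)\;\lesssim\;\mu\bigl(\overline B_{\rho_*}(x)\bigr)
\]
is false: take $\mu=\mathcal H^m$ restricted to an $m$-plane $V_0\subset\mathbb R^n$ and orthogonal projections. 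For the parameters $V$ whose fibre direction makes angle $<s$ with $V_0$ (a set of measure $\sim s^m$) one has $\mu(X(x,V,s,\rho))\sim\rho^m$ for all $\rho$, so the integrand is $\sim s^{-m}$ there and the integral stays bounded below by a dimensional constant for every $s$ and every $\rho_*$; it neither tends to $0$ as $s\to0$ nor is controlled by $\mu(\overline B_{\rho_*}(x))$. Second, even if the integral did tend to $0$, Fatou's lemma controls $\int\liminf_s$, not $\int\limsup_s$, whereas the statement requires the $\limsup$ in $s$ to vanish a.e.; to conclude pointwise you would need $\int\sup_{0<s<s_0}(\cdots)\to0$, which is exactly what the example refutes. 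Third, the reference measure $\bar\gamma=\mathcal H^m+(\Phi_\eta\tilde\mu)^s$ of the disintegration \eqref{e:dis11} contains a singular part, so your premise $\bar\gamma(\mathbb B_{cs}(\lambda))\lesssim s^m$ fails in general.

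The paper escapes this trap by never trying to bound the small-$\rho$ supremum directly. It proves two separate facts and intersects them: (a) the triple limit is \emph{finite} for a.e.\ $\lambda\in A_x$ (your slab estimate, using that $\Pi_\lambda(x)$ is a non-null Lebesgue point of $(\Pi_\lambda\mu)^a$ on $A$); and (b) the triple limit is \emph{either $0$ or $\infty$}. The dichotomy (b) is obtained by splitting the parameter-space measure into the absolutely continuous part restricted to a set $\Sigma_\eta$ on which $\mu_{\lambda',x}(X)\le M$ (an Egorov/Chebyshev truncation that is precisely the cure for the non-integrability of $\lambda\mapsto\mu_{\lambda,x}(X)$ you flag at the end), and the two remainders ($\Sigma_\eta^c$ and the singular part of $\Phi_\eta\tilde\mu$), which are handled by the outer-measure density dichotomy of Lemma~\ref{l:density}. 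On $\Sigma_\eta$ the Jacobian trick is applied \emph{inside} the parameter integral and \emph{before} letting $s\to0$, as in \eqref{e:pass3}: the factor $d(x,x')^m/\rho^m\le1$ absorbs $\rho^{-m}$ uniformly over all $\rho<\delta$, leaving $s^{-m}\int_{\mathbb B_{cs}(\lambda)\cap\Sigma_\eta}\mu_{\lambda',x}(\overline B_\delta(x)\setminus\{x\})\,d\mathcal H^m$, to which Lebesgue differentiation applies thanks to the bound $M$. Your version applies the same trick only after sending $s\to0$ at fixed $\rho$, which is why the uniformity over small $\rho$ is lost. To repair the proof you would need to import both the finiteness-plus-dichotomy structure and the truncation at level $M$.
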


\begin{proof}
It suffices to prove that the proposition holds for $\mathcal{L}^l$-almost every 
$\lambda \in \mathbb{B}_{s_0/(32cL)}(\lambda_0)$, for any $\lambda_0 \in \Lambda$, 
where $s_0 \in (0,1)$, $L > 0$, and $c \geq 1$ are the constants given by 
Proposition~\ref{p:equivcone}. 
To simplify the notation, we assume without loss of generality that $\lambda_0 = 0 \in \Lambda$.

Since the restriction $\lambda \in \mathbb{B}_{s_0/(32cL)}(0)$ corresponds, for each 
$x \in X$, to working spatially within the conical region $X(x,0,s_0/8c)$, 
it is convenient to define the measure $\tilde{\mu}_x$ on $X$ by
\[
    \tilde{\mu}_x := \mu \restr X(x,0,s_0/8c).
\]
Furthermore, with a slightly abuse of notation, we set
\[
A \equiv A \cap [X \times \mathbb{B}_{s_0/32cL}(0)].
\]

We will show that, for $\mu$-almost every $x \in X$,
\begin{equation}
\label{e:claim23}
\text{$\tilde{\mu}_x$ satisfies condition~(1) or~(2) for 
$\mathcal{L}^l$-a.e. $\lambda \in  A_x$.}
\end{equation}
Thanks to the inclusion~\eqref{e:ineq12}, the same conclusion will then hold for $\mu$.

\vspace{4mm}

\emph{Step 1.} We start by showing that for $\mu$-a.e. $x \in X$ the following condition is satisfied for $\mathcal{L}^l$-a.e. $\lambda \in A_x$ 
\begin{equation}
    \label{e:finconden}
    \limsup_{\delta \to 0^+} \limsup_{s \to 0+} \sup_{0 < \rho <\delta} \frac{\mu(X(x,\lambda,s,\rho) )}{(s\rho)^m} < \infty.
\end{equation}
Indeed, notice that, by the definition of the set $A$, for every $\lambda \in \Lambda$ we have
    \[
    \limsup_{r \to 0^+} \frac{\mu(\Pi^{-1}_\lambda(B^m_r(y)))}{r^m} < \infty, \ \ \text{ for every $y \in \Pi_\lambda(A_\lambda)$}.
    \]
    This means that for every $y \in \Pi_\lambda(A_\lambda)$ there exists $\delta_y >0$ such that
    \begin{equation}
    \label{e:estfinls}
    \sup_{0 < r < \delta_y} \frac{\mu(\Pi^{-1}_\lambda(B^m_r(y)))}{r^m} < \infty.
    \end{equation}
    Therefore, by using the inclusion $X(x,\lambda,s,\rho)  \subset \Pi^{-1}_\lambda\big(B^m_{s\rho}(\Pi_\lambda(x))\big)$ for every $s \in (0,1)$ and $\rho>0$, we estimate 
    \begin{align*}
       \sup_{0 < \rho <\delta_{\Pi_\lambda(x)}} \frac{\mu(X(x,\lambda,s,\rho) )}{(s\rho)^m} &\leq \sup_{0 < \rho <\delta_{\Pi_\lambda(x)}} \frac{\mu(\Pi^{-1}_\lambda(B^m_{s\rho}(\Pi_\lambda(x))))}{(s\rho)^m} \\
       &\leq \sup_{0 < r <\delta_y} \frac{\mu(\Pi^{-1}_\lambda(B^m_{r}(y)))}{r^m}, \ \ \text{ for $y= \Pi_\lambda(x)$}.
    \end{align*}
    Therefore, by using \eqref{e:estfinls}, we infer for every $\lambda \in \Lambda$
    \[
    \limsup_{s \to 0^+}\sup_{0 < \rho <\delta_{\Pi_\lambda(x)}} \frac{\mu(X(x,\lambda,s,\rho) )}{(s\rho)^m} < \infty, \ \ \text{ for every $x \in A_\lambda$}.
    \]
   Finally, thanks to $(\mu \otimes \mathcal{L}^l)$-measurability of the set of points $(x,\lambda) \in X \times \Lambda$ fulfilling condition \eqref{e:finconden} (similar to Appendix \ref{a:measprod}), we make use of Fubini's theorem to infer the validity of the claim. 

\vspace{4mm}

\emph{Step 2.} By virtue of \eqref{e:finconden}, in order to prove condition \eqref{e:claim23}, it suffices to show the following claim: for $\mu$-a.e. $x \in X$ one of the following conditions is satisfied for $\mathcal{L}^l$-a.e. $\lambda \in A_x$
\begin{align}
\label{e:(1)}
        &\lim_{\delta \to 0^+} \limsup_{s \to 0+} \sup_{0 < \rho <\delta} \frac{\mu(X(x,\lambda,s,\rho) )}{(s\rho)^m} =0 \\
        \label{e:(2)}
        &\lim_{\delta \to 0^+}\limsup_{s \to 0+} \sup_{0 < \rho <\delta} \frac{\mu(X(x,\lambda,s,\rho) )}{(s\rho)^m} =\infty.
    \end{align}

To prove the claim, for every $x \in X$ we decompose the measure $\tilde{\mu}_x$ as follows. First notice that
\[
\tilde{\mu}_{V,x} \leq \tilde{\mu}_x \leq \sum_{V \in \mathcal{V}_{l,m}} \tilde{\mu}_{V,x}, \ \ \text{ for every $V \in \mathcal{V}_{l,m}$},
\]
where we set $\tilde{\mu}_{V,x} := \tilde{\mu} \restr U(V)$ and $U(V)$ is the Borel set in $X \setminus \{x\}$ introduced in Proposition \ref{p:equivcone} and satisfying \eqref{e:curvradpro1.97}. As a consequence, the claim is proved if we show that for $\mu$-a.e. $x \in X$ and for every $V \in \mathcal{V}_{l,m}$,
\begin{equation}
    \label{e:claim24}
\text{$\tilde{\mu}_{V,x}$ satisfies condition~\eqref{e:(1)} or~\eqref{e:(2)} for 
$\mathcal{L}^l$-a.e. $\lambda \in A_x$.}
\end{equation}

Let us prove \eqref{e:claim24}. To this purpose, let us fix $V \in \mathcal{V}_{l,m}$. For every $\eta \in V^\bot$, by exploiting the disintegration in \eqref{e:dis11}, we decompose the measure $\tilde{\mu}_{V,x}$ in \emph{an $\eta$-dependent way} as follows
\begin{equation}
    \label{e:a+s}
    \tilde{\mu}_{V,x} =\tilde{\mu}^a_{V,x} \oplus \tilde{\mu}^s_{V,x}, 
    \end{equation}
    where $\tilde{\mu}^a_{V,x}$ and $\tilde{\mu}^s_{V,x}$ are mutually singular measures defined as 
    \begin{equation}
    \label{e:a+sdef}
     \tilde{\mu}^a_{V,x}:=\mathring{\mu}_{V,x}^{\lambda} \otimes \mathcal{H}^m \restr (V +\eta)\quad  \text{ and } \quad \tilde{\mu}^s_{V,x} :=  \mathring{\mu}_{V,x}^{\lambda} \otimes(\Phi_{V,\eta,x}\tilde{\mu}_{V,x})^s.
      \end{equation}
      Recall that the map $\Phi_{V,\eta,x}$ is the one provided by Proposition \ref{p:equivcone} and that $(\mathring{\mu}^\lambda_{V,x})_{\lambda \in \Lambda}$ is the Borel measurable family of measures in $X$ defined for every $\lambda \in \Lambda$ as in \eqref{e:defringmu}. We observe that, because of the obvious inequalities 
 \begin{align*}
    \tilde{\mu}^a_{V,x} \leq \tilde{\mu}^a_{V,x}\oplus \tilde{\mu}^s_{V,x} \leq \tilde{\mu}_{V,x}  \ \ &\text{ as measures in $X \setminus \{x\}$} \\
    \tilde{\mu}^s_{V,x} \leq \tilde{\mu}^a_{V,x}\oplus \tilde{\mu}^s_{V,x} \leq \tilde{\mu}_{V,x} , \ \ &\text{ as measures in $X \setminus \{x\}$}, 
    \end{align*}
 to prove \eqref{e:claim24}, it suffices to verify that for $\mathcal{H}^{l-m}$-a.e. $\eta \in V^\bot$, for $\mu$-a.e. $x \in X$,
 \begin{equation}
     \label{e:claim25}
     \text{$\tilde{\mu}^a_{V,x}$ and $\tilde{\mu}^s_{V,x}$ satisfies~\eqref{e:(1)} or~\eqref{e:(2)} for 
$\mathcal{H}^m$-a.e. $\lambda \in A_x \cap (V + \eta)$.}
 \end{equation}
Indeed, since the couples of points $(x,\lambda) \in X \times \Lambda$ fulfilling the condition \eqref{e:(1)} or \eqref{e:(2)} form a $(\mu \otimes \mathcal{L}^l)$-measurable set (similar to Appendix \ref{a:measprod}), property \eqref{e:claim25} in combination with Fubini's theorem allows us to deduce \eqref{e:claim24} directly from \eqref{e:claim25}.

\vspace{2mm}

The rest of the proof is dedicated to proving \eqref{e:claim25}. To simplify notation, we omit the $V$-dependence for the remainder of the proof and write 
\[
\tilde{\mu}^a_{x} \equiv \tilde{\mu}^a_{V,x}, \qquad  \tilde{\mu}^s_{x} \equiv \tilde{\mu}^s_{V,x}, \qquad \mathring{\mu}_{\lambda,x} \equiv \mathring{\mu}^\lambda_{V,x}, \qquad   \Phi_{\eta} \equiv \Phi_{V,\eta}.
\]

\vspace{4mm}

\emph{Step 3: The measure $\tilde{\mu}^a_{x}$}. The proof of \eqref{e:claim25} for $\tilde{\mu}^a_{x}$ can be further reduced to the following statement. Let $\epsilon>0$. Then, for $\mathcal{H}^{l-m}$-a.e. $\eta \in V^\bot$ we find a Borel set $\Sigma_\eta  \subset A_x \cap (V + \eta)$ such that for $\mu$-a.e. $x \in X$ 
\begin{enumerate}
\item $\mathcal{H}^m(A_x \cap (V + \eta) \setminus \Sigma_\eta ) \leq \epsilon$ 
\item $\tilde{\mu}^a_x$ satisfies condition \eqref{e:(1)} or \eqref{e:(2)} for $\mathcal{H}^m$-a.e. on $ \Sigma_\eta $. 
  \end{enumerate}  
    In order to show this, we use the weak* convergence in \eqref{e:finite19} (see also Remark \ref{r:uniquelp}) to deduce that, for $\mu$-a.e.\ $x \in X$, the convergence in \eqref{e:locweak14} can be strengthened from locally weak* convergence on $X \setminus \{x\}$ to weak* convergence on $X$ along the entire family as $s \to 0^+$, with the unique limit point $\mu_{\lambda,x}$ for $\mathcal{L}^l$-a.e.\ $\lambda \in A_x$. In this case, $\mu_{\lambda,x}$ is a finite measure on $X$.

As a consequence, for $\mathcal{H}^{l-m}$-a.e. $\eta \in V^\bot$, there exists a sufficiently large $M >0$ (depending on $\epsilon$) such that 
    \begin{equation}
    \label{e:epsilon}
    \mathcal{H}^m(\{\lambda \in A_x \cap (V + \eta)  \ |\ \mu_{\lambda,x}(X)  > M\}) \leq \epsilon .
    \end{equation}
    Hence, by defining $\Sigma_\eta := \{\lambda \in A_x \cap (V +\eta) \ |\ \mu_{\lambda,x}(X) \leq M\}$ we have that
    \begin{equation}
    \label{e:usefin}
        \mathcal{H}^m(A_x \cap (V + \eta) \setminus \Sigma_\eta ) \leq \epsilon.
    \end{equation}
    We further decompose $\tilde{\mu}^a_x$ as
    \[
    \tilde{\mu}^a_x= \tilde{\mu}^a_{x,\Sigma_\eta } \oplus \tilde{\mu}^a_{x,\Sigma_\eta^c},
    \]
    where $\tilde{\mu}^a_{x,\Sigma_\eta }$ and $\tilde{\mu}^a_{x,\Sigma_\eta^c}$ are mutually singular measures defined as
    \begin{align}
        \tilde{\mu}^a_{x,\Sigma_\eta} &:= \mathring{\mu}_{\lambda,x} \otimes \mathcal{H}^m \restr \Sigma_\eta \\
        \tilde{\mu}^a_{x,\Sigma_\eta^c} &:=\mathring{\mu}_{\lambda,x} \otimes \mathcal{H}^m \restr \big(A_x \cap (V+\eta) \setminus \Sigma_\eta\big).
    \end{align}
    
    \vspace{2mm}
    
    As above, in order to prove that for $\mathcal{H}^{l-m}$-a.e. $\eta \in V^\bot$, for $\mu$-a.e. $x \in X$ condition (2) is satisfied, it is enough to prove that $\tilde{\mu}^a_{x,\Sigma_\eta}$ and $\tilde{\mu}^a_{x,\Sigma_\eta^c}$ enjoy the same property.

      \vspace{4mm}

    \emph{-Substep 3.1: The measure $\tilde{\mu}^a_{x,\Sigma_\eta}$.} Recall the definition in \eqref{e:defmuring} and the property \eqref{e:equ21}. Thanks to the $(\mu \otimes \mathcal{L}^l)$-measurability of the set of points $(x,\lambda) \in X \times \Lambda$ satisfying $\eqref{e:equ21}$ (see Appendix \ref{a:measprod}), Fubini's theorem yields that for 
$\mathcal{H}^{l-m}$-a.e.\ $\eta \in V^\perp$, for $\mu$-a.e.\ $x \in \mathbb{R}^n$, 
and for $\mathcal{H}^m$-a.e.\ $\lambda \in \Sigma_\eta$, 
\begin{equation*}
    \mathring{\mu}_{\lambda,x}(B) 
    \leq \int_B \mathrm{d}(x,x')^m \, d\mu^{\lambda}_x(x'),
    \ \ 
    \text{ for every Borel set $B \subset X$}. 
\end{equation*}
Consequently, for $\mathcal{H}^{l-m}$-a.e.\ $\eta \in V^\bot$, 
for $\mu$-a.e.\ $x \in \mathbb{R}^n$, 
and for $\mathcal{H}^m$-a.e.\ $\lambda \in \Sigma_\eta$, we can give the following estimate
for every $s \in (0, s_0/32c^2L)$
\begin{align}
\nonumber
    \sup_{0 < \rho < \delta} 
    \frac{\tilde{\mu}^a_{x,\Sigma_\eta}(X(x,\lambda,s,\rho))}{(s\rho)^m}
    &\leq 
    \sup_{0 < \rho < \delta} 
    \frac{1}{(s\rho)^m}
    \int_{\mathbb{B}_{cs}(\lambda) \cap \Sigma_\eta}
    \mathring{\mu}_{\lambda',x}(\overline{B}_\rho(x)) \, d\mathcal{H}^m(\lambda')  \\
\nonumber
    &\leq 
    \sup_{0 < \rho < \delta} 
    \frac{c}{s^m}
    \int_{\mathbb{B}_{cs}(\lambda) \cap \Sigma_\eta}
    \bigg( 
        \int_{\overline{B}_\rho(x)} 
        \frac{\mathrm{d}(x,x')^m}{\rho^m} \, d\mu^{\lambda'}_x(x') 
    \bigg)
    d\mathcal{H}^m(\lambda')\\
\label{e:pass3}
    &\leq 
    \frac{c}{s^m}
    \int_{\mathbb{B}_{cs}(\lambda) \cap \Sigma_\eta}
    \mu^{\lambda'}_x(\overline{B}_\delta(x) \setminus \{x\})
    \, d\mathcal{H}^m(\lambda'),
\end{align}
   where in the first inequality above we applied the fundamental inclusion \eqref{e:equivcone1234} for transversal families of maps, and where we may use \eqref{e:equ21} since 
$\mathbb{B}_{cs}(\lambda) \subset \mathbb{B}_{s_0/16cL}(0)$ 
for every $\lambda \in \mathbb{B}_{s_0/32cL}(0)$ and 
$s \in (0, s_0/32c^2L)$.

   Now recall that, from the definition of $\Sigma_\eta$, the integrand in \eqref{e:pass3} is essentially bounded by the constant $M$. Therefore, by virtue of Lebesgue's differentiation theorem, we can write for $\mathcal{H}^m$-a.e. $\lambda \in   \Sigma_\eta$
    \[
    \limsup_{s \to 0^+}\sup_{0 < \rho <\delta} \frac{\tilde{\mu}^a_{\Sigma_\eta} (X(x,\lambda,s,\rho) )}{(s\rho)^m} \leq \mu_{\lambda,x}(\overline{B}_{\delta}(x) \setminus \{x\}), \ \ \text{ for every $\delta>0$.}
    \]
    But this means that for $\mathcal{H}^m$-a.e. $\lambda \in  \mathbb{B}_{s_0/32cL}(0) \cap \Sigma_\eta$ we have
    \begin{align}
        \lim_{\delta \to 0^+} \limsup_{s \to 0^+}\sup_{0 < \rho <\delta} \frac{\tilde{\mu}^a_{\Sigma^c} (X(x,\lambda,s,\rho) )}{(s\rho)^m}\leq \lim_{\delta \to 0^+} \mu_x^\lambda(\overline{B}_\delta(x) \setminus \{x\}) = \mu_x^\lambda(\emptyset)=0,
    \end{align}
    where, in the last limit, we used the monotonicity property of finite measures. 
    
    This gives the desired property for $\tilde{\mu}^a_{\Sigma_\eta}$.

    \vspace{2mm}

\emph{-Substep 3.2: The measure $\tilde{\mu}^a_{\Sigma_\eta^c}$.} Fix $\eta \in V^\bot$. For every $\delta >0$, we consider the outer measure $\psi_\delta \colon 2^{V+\eta} \to [0,\infty]$ defined as
\[
\psi_\delta(E) := \sup_{0 < \rho < \delta} \frac{1}{\rho^m} \int_{E \setminus \Sigma_\eta }^* \mathring{\mu}_{\lambda',x}(\overline{B}_\rho(x)) \, d\mathcal{H}^m(\lambda'), \ \ E \subset V + \eta.
\]
Since by definition $\psi_\delta( \Sigma_\eta)=0$, we can apply Lemma \ref{l:density} to infer that for every $\delta >0$
\begin{equation}
    \limsup_{s \to 0^+} \frac{\psi_\delta(\mathbb{B}_s(\lambda))}{s^m}=0 \ \ \text{or} \ \ \limsup_{s \to 0^+} \frac{\psi_\delta(\mathbb{B}_s(\lambda))}{s^m}=\infty, \ \ \text{for $\mathcal{H}^m$-a.e. $\lambda \in \Sigma_\eta$}.
\end{equation}
Therefore, we can finally infer that for $\mathcal{H}^m$-a.e. $\lambda \in \Sigma_\eta$
\begin{equation}
   \lim_{\delta \to 0^+} \limsup_{s \to 0^+} \frac{\psi_\delta(\mathbb{B}_s(\lambda))}{s^m}=0 \ \ \text{ or } \ \ \lim_{\delta \to 0^+} \limsup_{s \to 0^+} \frac{\psi_\delta(\mathbb{B}_s(\lambda))}{s^m}= \infty.
\end{equation}

From the very definition of $\psi_\delta$, we may again use the inclusion \eqref{e:equivcone1234} to infer from the above condition that, for every $\eta \in V^\bot$ and for $\mu$-a.e.\ $x \in X$, the condition (2) is satisfied by $\mu^a_{\Sigma_\eta^c}$.

By putting together these two Substeps, we finally infer the validity of \eqref{e:claim25} for $\tilde{\mu}^a_{V,x}$.

\vspace{4mm}

\emph{Step 4: The measure $\tilde{\mu}^s_x$.} We can proceed analogously as in Substep 1.2. 

Fix $\eta \in V^\bot$. For every $\delta >0$, we consider the outer measure $\psi_\delta \colon 2^{V +\eta} \to [0,\infty]$ defined as 
\[
\psi_\delta(E) := \sup_{0 < \rho < \delta} \frac{1}{\rho^m} \int_{E}^* \mathring{\mu}_{\lambda',x}(\overline{B}_\rho(x)) \, d \Phi_{\eta}\mu^s(\lambda'), \ \ E \subset V +\eta.
\]
Since $\Phi_{\eta}\mu^s \, \bot \, \mathcal{H}^m \restr (V+\eta)$ we find a Borel set $\Sigma_\eta \subset V + \eta$ with $\mathcal{H}^m((V+\eta) \setminus \Sigma_\eta)=0$ and $\Phi_{\eta}\mu^s((V + \eta) \cap \Sigma_\eta)=0$. Therefore, for every $\delta>0$, we deduce also that $\psi_{\delta}(\Sigma_\eta)=0$. For this reason we can apply once again Lemma \ref{l:density} to finally infer that for every $\delta>0$
\begin{equation}
\label{e:lemfed}
    \limsup_{s \to 0^+} \frac{\psi_\delta(\mathbb{B}_s(\lambda))}{s^m}=0 \ \ \text{or} \ \ \limsup_{s \to 0^+} \frac{\psi_\delta(\mathbb{B}_s(\lambda))}{s^m}=\infty, \ \ \text{for $\mathcal{H}^m$-a.e. $\lambda \in V + \eta$}.
\end{equation}
    
    From the very definition of $\psi_\delta$, we may use again \eqref{e:equivcone1234} to infer from the above condition that, for every $\eta \in V^\bot$ and for $\mu$-a.e.\ $x \in X$, the condition \eqref{e:(1)} or \eqref{e:(2)} is satisfied by $\tilde{\mu}^s_x$ for $\mathcal{H}^m$-a.e. $\lambda \in A_x \cap (V + \eta)$.

Property \eqref{e:claim25} is finally proved also for $\tilde{\mu}^s_x$.

\end{proof}

As a corollary of Lemma \ref{l:keyalternative} we derive the following.

\begin{lemma}
\label{l:keyalternative1}
    Let $(X,d)$ be a locally compact metric space, let $(\Pi_\lambda)_{\lambda \in \Lambda}$ be a transversal family of maps, and let $\mu$ be a finite Borel measure on $X$. By letting $A \subset X \times \Lambda$ be the measurable set given by Proposition \ref{p:finite}, then, for $\mathcal{L}^l$-a.e. $\lambda \in \Lambda$ the following condition 
    \begin{equation}
    \label{e:3limit1}
        \lim_{\delta \to 0^+} \limsup_{s \to 0+} \sup_{0 < \rho <\delta} \frac{\mu(X(x,\lambda,s,\rho) )}{(s\rho)^m} =0,
    \end{equation}
    is satisfied for $\mu$-a.e. $x \in A_\lambda$.
\end{lemma}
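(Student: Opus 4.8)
The plan is to deduce Lemma~\ref{l:keyalternative1} from Lemma~\ref{l:keyalternative} by exchanging the order of the two ``almost every'' quantifiers by means of Fubini--Tonelli. Let $G \subset X \times \Lambda$ denote the set of pairs $(x,\lambda)$ for which
\[
\lim_{\delta \to 0^+} \limsup_{s \to 0^+} \sup_{0<\rho<\delta} \frac{\mu(X(x,\lambda,s,\rho))}{(s\rho)^m} = 0
\]
holds, and set $B := A \setminus G = A \cap G^c$, where $A \subset X \times \Lambda$ is the $(\mu \otimes \mathcal{L}^l)$-measurable set furnished by Proposition~\ref{p:finite}. The assertion of Lemma~\ref{l:keyalternative1} is precisely that $\mu(B_\lambda)=0$ for $\mathcal{L}^l$-a.e.\ $\lambda \in \Lambda$, where $B_\lambda = A_\lambda \setminus G_\lambda$ is the $\lambda$-section of $B$.

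The first step is to check that $G$, and hence $B$, is $(\mu \otimes \mathcal{L}^l)$-measurable. This is the only point that requires any genuine work, and it is carried out exactly as in Appendix~\ref{a:measprod}: one rewrites the triple limit as a countable combination of suprema and infima, over rational parameters $s,\rho,\delta$, of the function $(x,\lambda) \mapsto \mu(X(x,\lambda,s,\rho))$, using the monotonicity of $s \mapsto X(x,\lambda,s,\rho)$ and of $\rho \mapsto X(x,\lambda,s,\rho)$ to reduce to rational parameters, and the continuity of $\Pi$ together with Fubini to get the joint measurability of $(x,\lambda) \mapsto \mu(X(x,\lambda,s,\rho)) = \int \mathbbm{1}_{X(x,\lambda,s)}(x')\,\mathbbm{1}_{\overline{B}_\rho(x)}(x')\,d\mu(x')$. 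Combining this with the $(\mu \otimes \mathcal{L}^l)$-measurability of $A$ gives the measurability of $B$.

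Once measurability is available, the conclusion is immediate. Lemma~\ref{l:keyalternative} states that for $\mu$-a.e.\ $x \in X$ the condition~\eqref{e:3limit1} holds for $\mathcal{L}^l$-a.e.\ $\lambda \in A_x$, that is, $\mathcal{L}^l(B_x) = \mathcal{L}^l(A_x \setminus G_x) = 0$ for $\mu$-a.e.\ $x$ (this includes trivially those $x$ with $\mathcal{L}^l(A_x)=0$). Applying Tonelli's theorem to the characteristic function of $B$, integrating first in $\lambda$, we obtain
\[
(\mu \otimes \mathcal{L}^l)(B) = \int_X \mathcal{L}^l(B_x)\, d\mu(x) = 0 .
\]
Applying Tonelli's theorem once more, now integrating first in $x$, we get $\int_\Lambda \mu(B_\lambda)\, d\mathcal{L}^l(\lambda) = 0$, hence $\mu(B_\lambda) = \mu(A_\lambda \setminus G_\lambda) = 0$ for $\mathcal{L}^l$-a.e.\ $\lambda \in \Lambda$, which is exactly the statement of the lemma. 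I do not expect any obstacle beyond the (routine) measurability bookkeeping: the substantive content is already contained in Lemma~\ref{l:keyalternative}, and Lemma~\ref{l:keyalternative1} is a formal Fubini-type corollary of it.
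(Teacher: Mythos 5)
Your proposal is correct and coincides with the paper's own argument: the paper likewise establishes the $(\mu\otimes\mathcal{L}^l)$-measurability of the set of pairs $(x,\lambda)$ satisfying \eqref{e:3limit1} (via the reduction to rational parameters carried out in Appendix~\ref{a:measkeyalt}) and then applies Fubini's theorem to Lemma~\ref{l:keyalternative} to swap the order of the two almost-everywhere quantifiers.
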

\begin{proof}
  Since the couple of points $(x,\lambda) \in X \times \Lambda$ satisfying condition \eqref{e:3limit1} form a $(\mu \otimes \mathcal{L}^l)$-measurable set (see Appendix \ref{a:measkeyalt}), it suffices to apply Fubini's theorem to Lemma \ref{l:keyalternative}.
\end{proof}

Now we prove e technical lemma for \emph{graphical} purely unrectifiable sets.

\begin{lemma}
\label{l:graphical}
   Let $(X,d)$ be a locally compact metric space, let $(\Pi_\lambda)_{\lambda \in \Lambda}$ be a family of maps, let $\mu$ be a finite Borel measure on $X$, and let $\lambda \in \Lambda$. Assume that for $\mu$-a.e. $x \in X$ it holds true 
     \begin{equation}
     \label{e:hyp1}
         \lim_{\delta \to 0^+} \limsup_{s \to 0+} \sup_{0 < \rho <\delta} \frac{\mu(X(x,\lambda,s,\rho) )}{(s\rho)^m} =0.
     \end{equation}
     Then, given a $\mathcal{H}^m$-measurable set $B \subset \mathbb{R}^m$ and a $\mathcal{H}^m$-measurable function $g \colon B \to \mathbb{R}^{n}$ such that 
     \begin{enumerate}
         \item $\Pi_\lambda(g(y)) =y$ for every $y \in B$
         \item $\emph{Im}(g):= \{x \in X \ | \  x=g(\Pi_\lambda(x))\}$ satisfies
         \[
         \mu(R \cap \emph{Im}(g))=0, \ \ \text{ for every countably $m$-rectifiable set $R \subset X$,}
         \]
     \end{enumerate}
     it holds $\mu(\emph{Im}(g))=0$.

\end{lemma}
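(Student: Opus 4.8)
The plan is to decompose $\operatorname{Im}(g)$ into a countably $m$-rectifiable part, which is $\mu$-null by hypothesis~(2), and a ``vertical'' remainder, and to show that the remainder is $\mu$-null using the cone hypothesis~\eqref{e:hyp1}. First I would reduce to the measure $\nu:=\mu\restr\operatorname{Im}(g)$: it still satisfies \eqref{e:hyp1} at $\nu$-a.e.\ point, and by~(2) it charges no countably $m$-rectifiable Borel set. Observe that $g$ is automatically injective --- if $g(y)=g(y')$ then $y=\Pi_\lambda(g(y))=\Pi_\lambda(g(y'))=y'$ --- so $\Pi_\lambda$ is injective on $\operatorname{Im}(g)$, which is therefore genuinely a graph over $B$; it then suffices to prove $\nu\equiv0$.

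The first main step is a Lipschitz stratification. For $c\in(0,1)$ set $\mathrm{Bad}_c:=\{y\in B:\ \text{for every }\rho>0\text{ there is }y'\in B\setminus\{y\}\text{ with }g(y')\in X(g(y),\lambda,c,\rho)\}$. By Definition~\ref{d:cone1}, if $y\notin\mathrm{Bad}_c$ then there is $\rho_y>0$ with $d(g(y'),g(y))\le c^{-1}|y'-y|$ whenever $d(g(y'),g(y))\le\rho_y$; stratifying $B\setminus\mathrm{Bad}_c$ by $\rho_y\ge 1/j$, then by the $g$-preimages of a countable cover of $g(B)$ by balls of radius $1/(2j)$, and finally by unit cubes of $\mathbb{R}^m$, one obtains countably many sets on each of which $g$ is $c^{-1}$-Lipschitz with bounded domain. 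Hence $g(B\setminus\mathrm{Bad}_c)$ lies in a countably $m$-rectifiable Borel set, so $\nu(g(B\setminus\mathrm{Bad}_c))=0$ by~(2), i.e.\ $\nu(g(\mathrm{Bad}_c))=\nu(\operatorname{Im}(g))$ for every $c$. Since $\mathrm{Bad}_c$ decreases as $c\downarrow0$ and $g$ is injective, passing to the limit gives $\nu(F)=\nu(\operatorname{Im}(g))$, where $F:=g\bigl(\bigcap_{c\in(0,1)}\mathrm{Bad}_c\bigr)$ is the set of graph points that are accumulation points of $\operatorname{Im}(g)$ inside \emph{every} cone $X(\cdot,\lambda,c)$. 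The statement thus reduces to showing $\nu(F)=0$.

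The hard part will be killing $F$. Suppose $\nu(F)>0$ and replace $\nu$ by $\nu\restr F$; using \eqref{e:hyp1}, a partition of $F$ by scales, and inner regularity, fix a small $\varepsilon>0$, scales $s_0,\delta_0>0$, and a compact set $K\subset F$ with $\nu(K)>0$ such that $\nu(X(x,\lambda,s,\rho))\le\varepsilon(s\rho)^m$ for all $x\in K$, $0<s<s_0$ and $0<\rho<\delta_0$. On the one hand, for every $c_0,\rho_0>0$ the symmetric ``vertical-pair'' set $\{(x,x')\in K\times K:\ x'\in X(x,\lambda,c_0,\rho_0)\}$ has positive $(\nu\restr K)\otimes(\nu\restr K)$-measure --- otherwise a routine measure-theoretic argument based on Fubini would exhibit $\nu\restr K$ as carried by countably many Lipschitz graphs, contradicting~(2). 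On the other hand, Fubini together with the displayed cone bound shows that for $c_0<s_0$ and $\rho_0<\delta_0$ this product measure is at most $\varepsilon(c_0\rho_0)^m\,\nu(K)$. The plan is to upgrade the bare positivity into a quantitative lower bound $\gtrsim(c_0\rho_0)^m\nu(K)$ with a constant independent of $\varepsilon$, by exploiting the accumulation property of $F$ (which makes the cones issued from points of $K$ ``full'' at all scales and apertures) together with the feature --- peculiar to the $\delta\to0$ version~\eqref{e:hyp1} --- that $\varepsilon$ may be chosen arbitrarily small as $\delta_0\downarrow0$; this would be implemented through a covering argument in the spirit of the classical Besicovitch--Federer proof, but transported to $\mathbb{R}^m$ by means of $\Pi_\lambda$ and the outer-measure density Lemma~\ref{l:density}, so as to dispense with any density or doubling hypothesis on $X$. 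Comparing the two bounds then forces $\nu(K)=0$, a contradiction; hence $\nu(F)=0$ and, combined with the first step, $\mu(\operatorname{Im}(g))=0$. I expect this last upgrade to be the real obstacle, since \eqref{e:hyp1} only bounds truncated cones from above --- so one cannot simply conclude that thin cones at points of $F$ contain no graph points --- and it is precisely the $\delta\to0$ in \eqref{e:hyp1}, which has no classical analogue, that has to be converted into rectifiability here.
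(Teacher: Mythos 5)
Your first half (the Lipschitz stratification showing that $\nu:=\mu\restr\operatorname{Im}(g)$ is carried by the accumulation set $F$, via the observation that points whose small cones miss the rest of the graph lie on countably many $c^{-1}$-Lipschitz graphs) is sound, and it is morally the same device the paper uses, where it appears as the claim that for a.e.\ $x$ the truncated cone $X(x,\lambda,s,\delta)$ meets the set again. But the second half, which is where the lemma actually lives, is a plan rather than a proof, and the plan as stated does not work. You propose to compare the Fubini upper bound $\int_K \nu\bigl(X(x,\lambda,c_0,\rho_0)\cap K\bigr)\,d\nu(x)\le \varepsilon (c_0\rho_0)^m\nu(K)$ with a quantitative lower bound $\gtrsim (c_0\rho_0)^m\nu(K)$ ``independent of $\varepsilon$''. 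The accumulation property of $F$ only guarantees that each cone $X(x,\lambda,c_0,\rho_0)$ contains at least one other graph point; it gives no lower bound whatsoever on the $\nu$-mass of that cone, and a general finite Borel measure has no lower density estimates. So the positivity of the vertical-pair set cannot be upgraded to a bound proportional to $(c_0\rho_0)^m\nu(K)$, and the contradiction never materializes. You correctly identify this as ``the real obstacle,'' but identifying an obstacle is not the same as overcoming it.

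The paper closes this gap with a different mechanism: a Federer-style covering argument on the base $\mathbb{R}^m$ rather than a pair-counting argument on $K\times K$. After two applications of Egorov (to make \eqref{e:hyp1} uniform on a compact set $K_{\delta_0}$ at scales $s<s_{\delta_0}$, $\rho<\delta_0$) and a Lusin/Besicovitch reduction to pieces $C_y$ of small diameter (this is where continuity of $g$ and its modulus $\omega$ enter — a point your sketch does not address), one picks for a.e.\ $x$ a companion point $y_x$ in the cone with $4|x-y_x|\ge 3h(x)$, covers $\Pi_\lambda(K_{\delta_0}\cap C_y)$ by the balls $\overline{B}^m_{sh(x)}(\Pi_\lambda(x))$, extracts a Vitali-disjoint subfamily, and uses the inclusion
\[
K_{\delta_0}\cap C_y\cap \Pi_\lambda^{-1}\bigl(\overline{B}^m_{sh(x)}(\Pi_\lambda(x))\bigr)\subset X(x,\lambda,4s,3h(x))\cup X(y_x,\lambda,4s,3h(x))
\]
to bound the mass of each preimage by $4\varepsilon(12)^m\alpha_m (sh(x))^m$; summing the disjoint balls then gives $\mu(K_{\delta_0}\cap C_y)\lesssim \varepsilon\,\omega(\delta/4)^m$ and hence $\mu(\operatorname{Im}(g))\lesssim\varepsilon$. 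The point is that the smallness of the cones is used multiplicatively against the packing of their shadows in $\mathbb{R}^m$, not against a hypothetical lower bound for the mass of individual cones. To complete your argument you would essentially have to import this covering step, so as it stands the proposal has a genuine gap at its decisive point.
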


\begin{proof}
     Let us first assume that $g$ is continuous and that $\text{Im}(g)$ is a bounded set.
     
    In view of the pointwise convergence with respect to the $x$-variable in \eqref{e:hyp1}, we can apply Egorov's theorem to deduce for every $\epsilon >0$ there exists a compact set $K \subset \text{Im}(g)$ such that \eqref{e:hyp1} holds uniformly for $x \in K$ and such that $\mu(\text{Im}(g) \setminus K) \leq \epsilon$. This means that there exists $\delta_0 >0$ such that 
    \begin{equation}
    \label{e:egorov1}
    \limsup_{s \to 0+} \sup_{0 < \rho <\delta} \frac{\mu(X(x,\lambda,s,\rho) )}{(s\rho)^m} \leq \epsilon, \ \ \text{ for every $x \in K$ and every $\delta \in (0,2\delta_0)$}.
    \end{equation}
Moreover, by using the definition of limsup and by applying once again Egorov's theorem in \eqref{e:egorov1}, we find a compact subset $K_{\delta_0} \subset K$ such that $\mu(K \setminus K_{\delta_0}) \leq \epsilon$ and a parameter $s_{\delta_0} \in (0,1)$ such that 
    \begin{equation}
    \label{e:estimate}
    \sup_{s \in (0,s_{\delta_0})} \sup_{0<\rho<\delta_0} \frac{\mu(X(x,\lambda,s,\rho) )}{(s\rho)^m} \leq 2\epsilon, \ \ \text{ for every $x \in K_{\delta_0}$}.
    \end{equation}

    Let us fix a modulus of continuity $\omega \colon (0,\infty) \to (0,\infty)$ for the map $g \restr K$. 
For the rest of the proof we fix $\delta \in (0,\delta_0/3)$.  

Consider the covering $\mathcal{C}_\delta$ of $K_{\delta_0}$ defined as follows. For every $y \in \Pi_\lambda(K_{\delta_0})$ let 
    \[
    C_y:= \{ x \in \overline{B}_{\delta/2}\big(g(y)\big) \ | \ \Pi_\lambda(x) \in \overline{B}^m_{\omega(\delta/4)}(y) \subset \mathbb{R}^m \},
    \]
    and define $\mathcal{C}_\delta:= \{C_y \ | \ y \in \Pi_\lambda(K_{\delta_0})\}$. Notice that $\mathcal{C}_\delta$ is actually a covering of $K_{\delta_0}$, because for every $y \in \Pi_\lambda(K_{\delta_0})$ the set $C_y$ contains the point $\{g(y)\}$, which, by property (1), is the only point of $\text{Im}(g)$ belonging to the level set $\Pi_\lambda^{-1}(y)$.
    We further notice that 
    \begin{equation}
    \label{e:impli1}
    x_1,x_2 \in C_y \cap K_{\delta_0} \text{ for some $y \in \Pi_\lambda(K_{\delta_0})$} \Rightarrow |x_1-x_2| \leq \delta/2.
    \end{equation}
    Indeed, in this case we find $y_1,y_2 \in \overline{B}^m_{\omega(\delta/4)}(y)$ such that $x_1=g(y_1)$ and $x_2=g(y_2)$ and hence it holds 
    \begin{align*}
        |x_1 -x_2| = |g(y_1)-g(y_2)| &\leq  |g(y_1)-g(y)| + |g(y)-g(y_2)|\\
        &\leq \delta/4 + \delta/4 =\delta/2.
    \end{align*}

    Let us denote by $\mathcal{B}_\delta$ the covering of $\Pi_\lambda(K_{\delta_0})$ defined as $\mathcal{B}_\delta:=\{\overline{B}^m_{\omega(\delta/4)}(y) \ | \ y \in \Pi_\lambda(K_{\delta_0})\}$.

By applying Besicovitch's covering theorem, we find $N_{m}$ families made of pairwise disjoint balls in $\mathcal{B}_\delta$, say $\{\mathcal{B}_1, \dotsc,\mathcal{B}_{N_{m}}\}$, such that 
\[
\Pi_\lambda(K_{\delta_0}) \subset \bigcup_{i=1}^{N_m} \bigcup_{B \in \mathcal{B}_i} B,
\]
with $N_m$ a dimensional constant. We will further denote
\[
\mathcal{C}_i := \{C_y \ | \ y \in \Pi_\lambda(K_{\delta_0}), \  \overline{B}^m_{\omega(\delta/4)}(y) \in \mathcal{B}_i\}, \ \ \text{ for every $i=1,\dotsc, N_m$}.
\]
We observe that $\{\mathcal{C}_1,\dotsc,\mathcal{C}_{N_m}\}$ covers $K_{\delta_0}$. Indeed, if $x \in K_{\delta_0}$ then there exists $\overline{B}^m_{\omega(\delta/4)}(y) \in \mathcal{B}_i$ for some $i=1,\dotsc, N_m$, such that $y' := \Pi_\lambda(x) \in \overline{B}^m_{\omega(\delta/4)}(y)$. Therefore we can use the modulus of continuity of $g$ to estimate
\[
|g(y) - g(y')| \leq \delta/4 < \delta/2,
\]
which immediately implies $x = g(y') \in \overline{B}_{\delta/2}(g(y))$ and hence $x \in C_y$, as desired.

Now fix an index $i \in {1, \dotsc, N_m}$ and a ball $B \in \mathcal{B}_i$.
Recall that $B$ has the form $B = \overline{B}^m_{\omega(\delta/4)}(y)$ for some $y \in \Pi_\lambda(K_{\delta_0})$.
We now work on the corresponding set $C_y$.

We claim that, for every $s \in (0,1)$, for $\mu$-a.e. $x \in K_{\delta_0} \cap C_y$ the set $X(x,\lambda,s,\delta) \cap K_{\delta_0} \cap C_y$ is non-empty. Indeed, assume by contradiction that there exists a set 
$K' \subset K_{\delta_0} \cap C_y$ with $\mu(K') > 0$ 
such that every $x \in K'$ contradicts the claim. Then, since by property (1) we know that the map $\Pi_\lambda \restr K'$ is injective, by calling $f \colon \Pi_\lambda(K') \to K'$ the inverse map, we can make use of the condition $X(x,\lambda,s,\delta) \cap K_{\delta_0} \cap C_y = \emptyset$ for every $x \in K'$ together with the implication \eqref{e:impli1} to easily deduce that
\[
|y-y'| = |\Pi_\lambda(f(y)) - \Pi_\lambda(f(y'))| \geq s |f(y)-f(y')|, \ \ \text{ for every $y,y' \in \Pi_\lambda(K')$}.
\] 
Since $\text{Im}(f)= K'$, we deduce from the above inequality that $K'$ is contained in the image of a $1/s$-Lipschitz map. However, since we are assuming that $\text{Im}(g)$ satisfies condition (2), and since $K' \subset \text{Im}(g)$, we deduce $\mu(K')=0$. This immediately gives a contradiction and our claim is thus confirmed. 

In view of the above claim, by defining the map $h \colon K_{\delta_0} \cap C_y \to [0,\infty)$ as (the map $h$ actually also depends on $s$, even though this is not reflected in our notation.
)
\[
h(x):= \sup \{d(x,x') \ | \ x' \in  X(x,\lambda,s,\delta) \cap K_{\delta_0} \cap C_y\},
\]
we deduce that for every $s \in (0,1)$
\[
h(x) >0, \ \ \text{ for $\mu$-a.e. $x \in K_{\delta_0} \cap C_y$}.
\]
Therefore, for $\mu$-a.e. $x \in K_{\delta_0} \cap C_y$ there exists $y_x \in X(x,\lambda,s,\delta) \cap K_{\delta_0} \cap C_y$ such that
\[
4|x-y_x| \geq 3h(x). 
\]
Observe that $h(x) \leq \text{diam}(C_y) \leq \delta $ for every $x \in K_{\delta_0} \cap C_y$. In addition, for every $s \in (0,s_{\delta_0}/4)$, it holds
\[
\Pi_\lambda(K_{\delta_0} \cap C_y) \subset \bigcup_{x \in K_{\delta_0} \cap C_y} \overline{B}^m_{sh(x)/5}(\Pi_\lambda(x)) \subset  \overline{B}^m_{\omega(\delta/4)+s\delta/5}(y).
\]
Vitali's covering theorem provides a countable subset $D$ of $K_{\delta_0} \cap C_y$ such that
\[
\{\overline{B}^m_{sh(x)/5}(\Pi_\lambda(x)) \ | \ x \in D\} \text{ is disjointed}, \qquad \Pi_\lambda(K_{\delta_0} \cap C_y) \subset \bigcup_{x \in D} \overline{B}^m_{sh(x)}(\Pi_\lambda(x)).
\]
Since by construction we have
\[
\begin{split}
\sum_{x \in D} \alpha_m (sh(x))^m &\leq 5^m \alpha_m[\omega(\delta/4) +s \delta/5]^m,
\end{split}
\]
if we prove that for every $x \in D$ it holds
\begin{equation}
\label{e:lemmafederer}
\mu(K_{\delta_0} \cap C_y \cap \Pi_\lambda^{-1}(\overline{B}^m_{sh(x)}(\Pi_\lambda(x))) \leq 4\epsilon(12)^m \alpha_m (sh(x))^m,
\end{equation}
then, it will immediately follows that
\begin{equation}
\label{e:estimate1}
\mu(K_{\delta_0} \cap C_y) \leq  4\epsilon (60)^m \alpha_m[\omega(\delta/4) + s\delta/5]^m.
\end{equation}

By arguing exactly as \cite[Lemma 3.3.6]{fed1} we assert that for every $x \in K_{\delta_0} \cap C_y$ it holds
\begin{equation}
\label{e:estimate2}
    K_{\delta_0} \cap C_y \cap \Pi_\lambda^{-1}(\overline{B}^m_{sh(x)}(\Pi_\lambda(x))) \subset X(x,\lambda,4s,3h(x))  \cup X(y_x,\lambda,4s,3h(x)).
\end{equation}
Therefore, by using that $3 h(x) \leq 3\delta < \delta_0$ (recall that we choose $\delta \in (0,\delta_0/3)$) and that $4s \leq s_{\delta_0}$, we can use \eqref{e:estimate} twice to infer from \eqref{e:estimate2} the validity of \eqref{e:lemmafederer} and, as a consequence, the validity of \eqref{e:estimate1}. By recalling that $s$ can be arbitrarily chosen in $(0,s_{\delta_0}/4)$, we finally infer that
\begin{equation}
    \label{e:keyestimate}
    \mu(K_{\delta_0} \cap C_y) \leq 4\epsilon \, 60^m \alpha_m\, \omega(\delta/4)^m = 4 \epsilon \, 60^m \mathcal{H}^m(\overline{B}^m_{\omega(\delta/4)}(y)).
\end{equation}

Since both $i \in \{1, \dotsc, N_m\}$ and $C_y \in \mathcal{C}_i$ were arbitrary, the above estimate holds for all such $C_y$ and $i$, and therefore
\begin{align}
    \mu(\text{Im}(g)) \leq \mu(K_{\delta_0}) + 2\epsilon &\leq \sum_{i=1}^{N_m} \sum_{C_y \in \mathcal{C}_i} \mu(K_{\delta_0} \cap C_y)  +2\epsilon \\
    &\leq 4\epsilon \, 60^m\sum_{i=1}^{N_m} \sum_{\overline{B}^m_{\omega(\delta/4)}(y) \in \mathcal{B}_i} \mathcal{H}^m(\overline{B}^m_{\omega(\delta/4)}(y))  +2\epsilon \\
    &\leq 4\epsilon \, 60^m\sum_{i=1}^{N_m}  \alpha_m(\text{diam}(\Pi_\lambda(K_{\delta_0}))+2\omega(\delta/4))^m +2\epsilon \\
    &\leq \epsilon [60^m 4N_m\alpha_m(\text{diam}(\Pi_\lambda(K_{\delta_0}))+2\omega(\delta/4))^m+2],
\end{align}
where we used that $\bigcup_{\overline{B}^m_{\omega(\delta/4)}(y) \in \mathcal{B}_i} \overline{B}^m_{\omega(\delta/4)}(y) $ is contained in a ball with radius equal to $\text{diam}(\Pi_\lambda(K_{\delta_0}))+2\omega(\delta/4)$ for every $i=1,\dotsc, N_m$. By using also that $\delta$ was arbitrarily chosen in $(0,\delta_0/3)$, and that the modulus of continuity $\omega$ depends only on $g$ and $K$, we finally infer    
\begin{align*}
\mu(\text{Im}(g)) &\leq \epsilon [60^m 4N_m\alpha_m\text{diam}(\Pi_\lambda(K_{\delta_0}))^m+2] \\
& \leq \epsilon [60^m 4N_m\alpha_m\text{diam}(\Pi_\lambda(K))^m+2]\\
& \leq \epsilon [60^m 4N_m\alpha_m\text{diam}(\text{Im}(g))^m+2]
\end{align*}
Eventually, the arbitrariness of $\epsilon >0$ gives the desired assertion since we assumed at the beginning that $\text{diam}(\text{Im}(g)) < \infty$.

Eventually, the case of a $\mathcal{H}^m$-measurable function $g$ follows by an easy application of Lusin's theorem together with a covering argument.
\end{proof}

We are finally in position to prove the main result. 

\begin{theorem}
\label{t:final}
    Let $(X,d)$ be a locally compact metric space, let $(\Pi_\lambda)_{\lambda \in \Lambda}$ be a transversal family of maps, and let $\mu$ be a finite Borel measure on $X$. Assume that there exists $\Sigma \subset \Lambda$ with $\mathcal{L}^l(\Sigma)>0$ such that the disintegration 
    \begin{equation}
    \mu= \mu_{\lambda,y} \otimes \Pi_\lambda \mu,
    \end{equation}
    satisfies
    \begin{equation}
    \label{e:fin(2)}
        \mathcal{H}^m(\{y \in \mathbb{R}^m \ |\ \mu_{\lambda,y}\text{ contains non-trivial atoms}\})>0, \text{ for every $\lambda \in \Sigma$}. 
    \end{equation}
    Then, there exist a submeasure $\mu' \leq \mu$ and a countably $m$-rectifiable set $R \subset X$ such that $\mu'(R)>0$ and
    \begin{equation}
    \label{e:abscont11}
    \mu' \restr R \ll \|\Pi_\lambda\|^m_{C^{0,\alpha}} \mathcal{H}^{\alpha m},
    \end{equation}
    for any $\alpha \in (0,1]$.
\end{theorem}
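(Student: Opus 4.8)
The plan is to extract a rectifiable piece of $\mu$ directly from a measurable graph of atoms lying over the absolutely continuous part of $\Pi_\lambda\mu$, using Lemma~\ref{l:graphical} to kill the purely unrectifiable alternative. I would first observe that hypothesis~\eqref{e:fin(2)} forces $(\Pi_\lambda\mu)^a\neq 0$ for every $\lambda\in\Sigma$: the set $\{y:\mu_{\lambda,y}\text{ has an atom}\}$ is determined only up to $\Pi_\lambda\mu$-null modifications, and a $\Pi_\lambda\mu$-full set carries $\mathcal{H}^m$-mass only where the density $f_\lambda$ of $(\Pi_\lambda\mu)^a$ is positive; thus~\eqref{e:fin(2)} actually says that, for every $\lambda\in\Sigma$, the genuine atom set $B_\lambda$ meets the set $Y_\lambda$ of non-null Lebesgue points of $(\Pi_\lambda\mu)^a$ (from the proof of Proposition~\ref{p:finite}) in a set of positive $\mathcal{H}^m$-measure, hence of positive $\Pi_\lambda\mu$-measure. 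In particular Proposition~\ref{p:finite} applies, giving the $(\mu\otimes\mathcal{L}^l)$-measurable set $A$ with $A_\lambda=\Pi_\lambda^{-1}(Y_\lambda)$, and Lemma~\ref{l:keyalternative1} furnishes a full-$\mathcal{L}^l$-measure set of parameters for which the single conical alternative~\eqref{e:3limit1} holds for $\mu$-a.e.\ $x\in A_\lambda$. Intersecting this set with $\Sigma$ (which has positive measure), I fix one such $\lambda$.

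Next I would build the graph. A measurable selection theorem applied to the multifunction $y\mapsto\{\text{atoms of }\mu_{\lambda,y}\}$ on $B_\lambda\cap Y_\lambda$ produces an $\mathcal{H}^m$-measurable map $g_\lambda\colon B_\lambda\cap Y_\lambda\to X$ with $\Pi_\lambda(g_\lambda(y))=y$ and $\mu_{\lambda,y}(\{g_\lambda(y)\})>0$. Since $B_\lambda\cap Y_\lambda\subset Y_\lambda$, its graph $\mathrm{Im}(g_\lambda)=\{x:x=g_\lambda(\Pi_\lambda(x))\}$ is contained in $A_\lambda$, and the disintegration formula gives
\[
\mu(\mathrm{Im}(g_\lambda))=\int_{B_\lambda\cap Y_\lambda}\mu_{\lambda,y}(\{g_\lambda(y)\})\,d(\Pi_\lambda\mu)(y)>0 .
\]

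Now suppose, towards a contradiction, that $\mathrm{Im}(g_\lambda)$ is $\mu$-purely unrectifiable, i.e.\ $\mu(R\cap\mathrm{Im}(g_\lambda))=0$ for every countably $m$-rectifiable $R$. I apply Lemma~\ref{l:graphical} to the measure $\mu\restr\mathrm{Im}(g_\lambda)\leq\mu$, with $B=B_\lambda\cap Y_\lambda$ and $g=g_\lambda$: hypothesis~\eqref{e:hyp1} for $\mu\restr\mathrm{Im}(g_\lambda)$ follows from~\eqref{e:3limit1} holding for $\mu$-a.e.\ $x\in A_\lambda\supset\mathrm{Im}(g_\lambda)$ together with $\mu\restr\mathrm{Im}(g_\lambda)\leq\mu$, while conditions (1) and (2) are exactly the graph property of $g_\lambda$ and the assumed pure unrectifiability. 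The lemma then yields $\mu(\mathrm{Im}(g_\lambda))=0$, a contradiction. Hence there is a countably $m$-rectifiable $R$ with $\mu(R\cap\mathrm{Im}(g_\lambda))>0$; replacing $R$ by $R\cap\mathrm{Im}(g_\lambda)$ (still countably $m$-rectifiable, being a subset of $R$) and setting $\mu':=\mu\restr R$ gives $\mu'\leq\mu$, $\mu'(R)>0$, and $\mu'\restr R=\mu'$.

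Finally I would verify the absolute continuity~\eqref{e:abscont11}. Since $\mu'$ is concentrated on the graph $\mathrm{Im}(g_\lambda)$, on which $\Pi_\lambda$ is injective, for every Borel $E\subset X$ one has $\mu'(E)=\mu'(E\cap\mathrm{Im}(g_\lambda))\leq\Pi_\lambda\mu'\bigl(\Pi_\lambda(E\cap\mathrm{Im}(g_\lambda))\bigr)$, and $\Pi_\lambda\mu'\leq\Pi_\lambda(\mu\restr A_\lambda)\ll\mathcal{H}^m$ by the very definition of $A$. When $\Pi_\lambda\in C^{0,\alpha}$, the Hölder bound $\mathcal{H}^m(\Pi_\lambda(S))\leq\|\Pi_\lambda\|^m_{C^{0,\alpha}}\,\mathcal{H}^{\alpha m}(S)$ shows that $\mathcal{H}^{\alpha m}(E)=0$ forces $\mathcal{H}^m(\Pi_\lambda(E\cap\mathrm{Im}(g_\lambda)))=0$ and hence $\mu'(E)=0$, giving~\eqref{e:abscont11} (the statement being vacuous when $\|\Pi_\lambda\|_{C^{0,\alpha}}=+\infty$). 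The main obstacle is not a single estimate but the first step and the measurability bookkeeping: one must be sure that the atoms prescribed by~\eqref{e:fin(2)} genuinely sit on $Y_\lambda$, since this is precisely what places $\mathrm{Im}(g_\lambda)$ inside the region $A_\lambda$ where the single conical alternative of Lemma~\ref{l:keyalternative1}, and therefore Lemma~\ref{l:graphical}, is available; the $(\mu\otimes\mathcal{L}^l)$-measurability of all sets involved and the appeals to Fubini's theorem are supplied by Proposition~\ref{p:finite} and the appendices.
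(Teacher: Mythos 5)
Your proof is correct and follows essentially the same route as the paper's: fix a good $\lambda$ via Proposition~\ref{p:finite} and Lemma~\ref{l:keyalternative1}, select a measurable graph of atoms of positive $\mu$-measure sitting inside $A_\lambda$, rule out its pure unrectifiability with Lemma~\ref{l:graphical}, and deduce \eqref{e:abscont11} from the H\"older image bound (the paper's version of this last step uses \cite[Corollary 2.10.11]{fed1}, but the two are equivalent here). The one point you defer --- the measurability of the multifunction $y \mapsto \{\text{atoms of }\mu_{\lambda,y}\}$ needed for the selection --- is not in fact supplied by Proposition~\ref{p:finite} or the appendices, but the paper establishes it inside its own proof via the Borel representative $\Psi(y,x)=\limsup_{i}\eta_y(B_{1/i}(x))$; with that detail supplied, your argument is complete.
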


\begin{proof}
    Let $A \subset X \times \Lambda$ be the set given by Proposition \ref{p:finite}. Thanks to the $(\mu \otimes \mathcal{L}^l)$-measurability of the set of points $(x,\lambda) \in X \times \Lambda$ satisfying condition \eqref{e:3limit1} (see Appendix \ref{a:measkeyalt}), we can apply Lemma \ref{l:keyalternative1} to find $\lambda \in \Sigma$ such that
\begin{enumerate}[(i)]
\item \eqref{e:fin(2)} is satisfied at $\lambda$
\item \eqref{e:hyp1} is satisfied for $\mu$-a.e. $z \in A_\lambda$.
\end{enumerate}
Notice that the condition \eqref{e:fin(2)} at $\lambda$ implies that
\[
\mu(A_\lambda) >0.
\]
From the definition of $A_\lambda$, we have 
$\Pi_\lambda (\mu \restr A_\lambda) \ll \mathcal{H}^m$.
 Hence, condition (i) implies that there exists $0< \mu' \leq \mu \restr A_\lambda$ which disintegrates atomically with respect to $\Pi_\lambda$, namely, it holds
  \begin{equation}
   \label{e:disfin}
   \mu' ( B) = \int_{\mathbb{R}^m} \sum_{x \in \Pi^{-1}_\lambda(y)} m_y(x) \delta_x(B) \,  d \mathcal{H}^m(y), \ \ \text{$B \subset X$ Borel},
   \end{equation}
   where $(m_y)_{y \in \mathbb{R}^m}$ are suitable multiplicity functions $m_y \colon X \to [0,\infty)$ supported in the level set $\Pi_\lambda^{-1}(y)$ for $\mathcal{H}^m$-a.e. $y \in \mathbb{R}^m$. 

By a standard saturation argument (see, for instance, \cite[Subsection 3.2.14]{fed1}), we decompose the measure $\mu'$ as 
    \begin{equation}
    \label{e:saturation}
    0< \mu' = \mu_r \oplus \mu_u
    \end{equation}
    where $\mu_r = \mu' \restr R$ for some countably $m$-rectifiable set $R \subset A_\lambda$, while $\mu_u = \mu' \restr (A_\lambda \setminus R)$ and the set $A_\lambda \setminus R$ satisfies
    \begin{equation}
    \label{e:mumunr}
    \mu'(R' \cap (A_\lambda \setminus R))=0, \ \ \text{ for every countably $m$-rectifiable set $R' \subset X$.}
    \end{equation}

    We claim that $\mu_u = 0$. Indeed, assume by contradiction that $\mu_u \neq 0$. In order to reach a contradiction, we will prove that (i) implies the existence of a $\mathcal{H}^m$-measurable set $B \subset \mathbb{R}^m$ and of a $\mathcal{H}^m$-measurable function $g \colon B \to X$ such that
\begin{equation}
\label{e:contra1}
    \mu_u(\text{Im}(g)) >0.
\end{equation}
   After verifying \eqref{e:contra1}, we can apply (ii) in combination with Lemma~\ref{l:graphical} to immediately conclude from the property of the set $A_\lambda \setminus R$ in \eqref{e:mumunr} that
   \[
   \mu_u(\text{Im}(g))= \mu'(\text{Im}(g) \cap (A_\lambda \setminus R))= \mu'\big(\text{Im}(g \restr \Pi_\lambda(A_\lambda \setminus R))\big)=0,
   \]
   where we also used that $\Pi_\lambda(A_\lambda \setminus R)$ is $\mathcal{H}^m$-measurable thanks to the measurable projection Theorem.
This contradicts \eqref{e:contra1} and will prove the claim.
   
  To prove \eqref{e:contra1}, assume for a moment that the family $(m_y)_{y \in \mathbb{R}^m}$ can be chosen in such a way that the following holds: the map $\Psi \colon \mathbb{R}^m \times X \to [0,\infty)$ defined as
   \begin{equation}
   \label{e:measPhi}
    \Psi(y,x) := m_y(x) 
   \end{equation}
   is Borel measurable. If this is the case, the set $\{\Psi >0\}$ is Borel. Therefore, by applying again the measurable projection Theorem we have that $\pi_{\mathbb{R}^m}(\{\Psi >0\})$ is $\mathcal{H}^m$-measurable, where $\pi_{\mathbb{R}^m} \colon \mathbb{R}^m \times X \to \mathbb{R}^m$ denotes the projection onto the first component. In addition, the multi-valued map
   \[
   \Gamma \colon \pi_{\mathbb{R}^m}\big(\{\Psi >0\}\big) \to 2^{X}, \qquad \Gamma(y):= \{\Psi >0\} \cap \pi_{\mathbb{R}^m}^{-1}(y),
   \]
   is well defined and $\Gamma(y) \neq \emptyset$ for every $y \in \pi_{\mathbb{R}^m}\big(\{\Psi >0\}\big)$. Then, the measurable selection Theorem gives us a $\mathcal{H}^m$-measurable map $g \colon \pi_{\mathbb{R}^m}\big(\{\Psi >0\}\big) \to X$ such that $g(y) \in \Gamma(y)$ for every $y \in \pi_{\mathbb{R}^m}\big(\{\Psi >0\}\big)$. By construction
   \[
   m_y(g(y)) >0 \ \ \text{and} \ \  \Pi_\lambda(g(y))=y, \ \ \text{for every $y \in \pi_{\mathbb{R}^m}\big(\{\Psi >0\}\big)$}. 
   \]
  Thanks to property (i) it follows that $\mathcal{H}^m\big(\pi_{\mathbb{R}^m}\big(\{\Psi >0\}\big)\big) >0$ and we can infer directly from the disintegration \eqref{e:disfin} the validity of $\eqref{e:contra1}$. 
  
 Consequently, to complete our contradiction argument, we are reduced to proving the following statement: let $\eta$ be a finite measure on $X$, then the multiplicity family $(m_y)_{y \in \mathbb{R}^m}$ of its atomic part with respect to the disintegration given by $\Pi_\lambda$, can be chosen in such a way that the map $\Psi$ in \eqref{e:measPhi} is Borel measurable.

To this purpose, assume that $\eta$ is a finite measure on $X$ that can be disintegrated as follows
\[
\eta(B) = \int_{\mathbb{R}^m} \eta_y (B \cap \Pi_\lambda^{-1}(y)) \, d\mathcal{H}^m(y), \ \ B \subset X \text{ Borel},
\]
for a family $(\eta_y)_{y \in \mathbb{R}^m}$ of finite measures in $X$. From \eqref{e:disintegration00}, it follows that $(\eta_y)_{y \in \mathbb{R}^m}$ can be chosen in such a way that
\[
y \mapsto \eta_y (B \cap \Pi_\lambda^{-1}(y)) \text{ is Borel measurable for every $B \subset X$ Borel}.
\]
As a consequence, it is not difficult to verify that, for every $r>0$, the map
\[
 (y,x) \mapsto \eta_y(B_r(x)) \text{ is a Borel measurable map on the product space $\mathbb{R}^m \times X$}.
\]
Therefore by defining the function 
\[
\Psi(y,x) = m_y(x) := \limsup_{i \to \infty} \eta_y(B_{1/i}(x)),
\]
we easily see that the family $(m_y(x))_{y}$ satisfies \eqref{e:disfin} and that $\Psi$ is a Borel representative.

Finally, our contradiction argument is concluded and consequently $\mu_u = 0$. This immediately gives that $0 <\mu_r(X) = \mu'(R)$.

In order to complete the proof of the theorem we just need to show that $\mu' \ll \mathcal{H}^{\alpha m}$ whenever the family $(\Pi_\lambda)_{\lambda}$ is made of $\alpha$-H\"older regular functions. To this purpose, we make use of \cite[Corollary 2.10.11]{fed1}, to infer
\begin{equation}
\label{e:formulah0}
\int_{\mathbb{R}^m} \mathcal{H}^0(B \cap \Pi_\lambda^{-1}(y)) \, d\mathcal{H}^m(y) \leq \|\Pi_\lambda\|_{C^{0,\alpha}}^m \mathcal{H}^{\alpha m}(B), \ \ \text{ for every Borel set $B \subset R$}.
\end{equation}
By defining the Borel regular measure $\phi$ as (see \cite[Theorem 2.10.10]{fed1})
\[
\phi(B) := \int_{\mathbb{R}^m} \mathcal{H}^0(B \cap \Pi_\lambda^{-1}(y)) \, d\mathcal{H}^m(y), \ \ \text{ for every Borel set $B \subset R$}, 
\]
we immediately see that, condition (i) together with the fact that $\Pi_\lambda \mu_r  = \Pi_\lambda \mu'  \ll \mathcal{H}^m$, imply $\mu' \ll \phi$. Eventually, applying formula \eqref{e:formulah0}, we finally infer that $\mu'  \ll \mathcal{H}^{\alpha m}$, and the proof is complete.
\end{proof}

Theorem \ref{t:final} immediately leads to the following Besicovitch-Federer type of results.

 \begin{theorem}
    \label{t:final2}
    Under the hypothesis of Theorem \ref{t:final} assume that $\Pi_\lambda \colon X \to \mathbb{R}^m$ is Lipschitz continuous for every $\lambda \in \Lambda$ and that
    \begin{equation}
    \label{e:atomicity147}
    \mu_{\lambda,y} \text{ is purely atomic for $\mathcal{H}^m$-a.e. $y \in \mathbb{R}^m$.}
    \end{equation}
    Then $\mu$ decomposes as
    \begin{equation}
         \mu = \mu_r \oplus \mu_u
     \end{equation}
     where, the measure $\mu_r$ is $m$-rectifiable, and the measure $\mu_u$ satisfies 
     \begin{equation}
         \label{e:final2.95}
         \Pi_\lambda\mu \, \bot \, \mathcal{H}^m, \ \ \text{for $\mathcal{L}^l$-a.e. $\lambda \in \Lambda$}.
     \end{equation}
\end{theorem}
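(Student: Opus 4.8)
The plan is to reduce everything to Theorem~\ref{t:final} via a decomposition of $\mu$ arranged so that the rectifiable piece is \emph{genuinely} absolutely continuous and the leftover piece has singular projections; the only substantive ingredient is Theorem~\ref{t:final}, and I read $\mu=\mu_r\oplus\mu_u$ as a mutually singular sum with $\mu_r\le\mu$. I would build the decomposition by a standard saturation argument: as in the proof of Theorem~\ref{t:final} (cf.\ \cite[Subsection~3.2.14]{fed1}), fix a Borel countably $m$-rectifiable $R\subset X$ maximizing $\mu(R)$ among all countably $m$-rectifiable sets, so that $\mu\restr(X\setminus R)$ charges no countably $m$-rectifiable set. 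Since $R$ is countably $m$-rectifiable, $\mathcal{H}^m\restr R$ is $\sigma$-finite, so I can split $\mu\restr R=(\mu\restr R)^a\oplus(\mu\restr R)^s$ into its absolutely continuous and singular parts with respect to $\mathcal{H}^m\restr R$, with $(\mu\restr R)^s$ concentrated on a Borel set $N\subset R$ having $\mathcal{H}^m(N)=0$. Then I set $\mu_r:=(\mu\restr R)^a$ and $\mu_u:=\mu\restr(X\setminus R)\oplus(\mu\restr R)^s=\mu-\mu_r$.

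Since $\mu_r$ is concentrated on $R\setminus N$ while $\mu_u$ is concentrated on $(X\setminus R)\cup N$, the two are mutually singular, so $\mu=\mu_r\oplus\mu_u$; and $\mu_r$ is absolutely continuous with respect to $\mathcal{H}^m$ and concentrated on the countably $m$-rectifiable set $R$, hence $m$-rectifiable by definition. (If the full hypothesis of Theorem~\ref{t:final} is in force, applying that theorem to $\mu$ itself shows in addition that $\mu_r\neq 0$; but the decomposition is unconditional.) Thus the entire content of the statement reduces to proving $\Pi_\lambda\mu_u\perp\mathcal{H}^m$ for $\mathcal{L}^l$-a.e.\ $\lambda$.

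I would prove this by treating the two summands of $\mu_u$ separately. For $(\mu\restr R)^s$, concentrated on the $\mathcal{H}^m$-null Borel set $N$: each $\Pi_\lambda$ is Lipschitz, so $\mathcal{H}^m(\Pi_\lambda(N))=0$ and $\Pi_\lambda\big((\mu\restr R)^s\big)$ is concentrated on an $\mathcal{H}^m$-null set, hence singular, for \emph{every} $\lambda\in\Lambda$. For $\nu:=\mu\restr(X\setminus R)$ I would argue by contradiction using Theorem~\ref{t:final}. Note first that $\nu$ inherits atomic conditionals: $\nu_{\lambda,y}$ is a rescaled restriction of $\mu_{\lambda,y}$ to $X\setminus R$, so by \eqref{e:atomicity147} it is purely atomic for $\mathcal{H}^m$-a.e.\ $y$, for $\mathcal{L}^l$-a.e.\ $\lambda$. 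Suppose now that $S:=\{\lambda:(\Pi_\lambda\nu)^a\neq0\}$ has positive $\mathcal{L}^l$-measure; $S$ is $\mathcal{L}^l$-measurable (Appendix~\ref{a:measprod}, arguing as in the proof of Theorem~\ref{p:finite1}), so I may pick a Borel $\Sigma\subset S$ with $\mathcal{L}^l(\Sigma)>0$ along which the atomicity holds. For $\lambda\in\Sigma$, writing $(\Pi_\lambda\nu)^a=f_\lambda\,\mathcal{H}^m$ we have $\mathcal{H}^m(\{f_\lambda>0\})>0$, and for $\mathcal{H}^m$-a.e.\ $y\in\{f_\lambda>0\}$ the measure $\nu_{\lambda,y}$ is a non-trivial purely atomic probability measure, hence carries a non-trivial atom; so $\mathcal{H}^m(\{y:\nu_{\lambda,y}\text{ contains non-trivial atoms}\})>0$ for every $\lambda\in\Sigma$. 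Thus $\nu$ and $\Sigma$ satisfy the hypothesis of Theorem~\ref{t:final}, which delivers a nonzero submeasure $\mu'\le\nu$ with $\mu'(R')>0$ for some countably $m$-rectifiable $R'$ — impossible, since $\nu$ charges no such set. Hence $\Pi_\lambda\nu\perp\mathcal{H}^m$ for $\mathcal{L}^l$-a.e.\ $\lambda$, and summing the two contributions yields \eqref{e:final2.95}.

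I expect essentially all the difficulty to be absorbed into Theorem~\ref{t:final} (the single-alternative Lemma~\ref{l:keyalternative} and the graphical-unrectifiability Lemma~\ref{l:graphical}). In the present reduction the one point that is not pure bookkeeping — and the only place where genuine Lipschitz regularity of the $\Pi_\lambda$, rather than mere continuity, is used — is the need to strip the $\mathcal{H}^m$-singular mass sitting on the rectifiable set $R$ out of $\mu_r$ (so as to keep $\mu_r$ absolutely continuous) while still knowing that its projection remains $\mathcal{H}^m$-singular. The remaining care — the $\mathcal{L}^l$-measurability of $\{\lambda:(\Pi_\lambda\nu)^a\neq0\}$ and the elementary passage from $(\Pi_\lambda\nu)^a\neq0$ to a positive $\mathcal{H}^m$-mass of fibres carrying atoms — is routine.
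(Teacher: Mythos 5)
Your proposal is correct and follows essentially the same route as the paper's proof: a saturation argument producing the maximal countably $m$-rectifiable set $R$, an application of Theorem~\ref{t:final} (in contrapositive form) to $\mu\restr(X\setminus R)$, and the splitting of $\mu\restr R$ into its $\mathcal{H}^m$-absolutely continuous and singular parts, with the Lipschitz regularity of $\Pi_\lambda$ handling the projection of the singular part. You merely spell out the details (measurability of $\{\lambda:(\Pi_\lambda\nu)^a\neq0\}$, inheritance of atomicity by $\nu_{\lambda,y}$, and the passage from $(\Pi_\lambda\nu)^a\neq0$ to hypothesis~\eqref{e:fin(2)}) that the paper leaves implicit.
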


\begin{proof}
 Exactly as in the decomposition \eqref{e:saturation}, we find a countably $m$-rectifiable set $R \subset X$ such that $\mu(R' \setminus R)=0$ for every countably $m$-rectifiable set $R' \subset X$. Therefore, by virtue of \eqref{e:atomicity147}, we make use of Theorem \ref{t:final} to infer the validity of \eqref{e:final2.95} for the measure $\mu \restr (X \setminus R)$. 
 
 We can further decompose $\mu \restr R = (\mu \restr R)^a \oplus (\mu \restr R)^s$, namely, its $\mathcal{H}^m$-absolutely continuous part and $\mathcal{H}^m$-singular part, respectively. Finally, since $(\mu \restr R)^s$ must be concentrated on a $\mathcal{H}^m$-negligible set, by setting $\mu_r := (\mu \restr R)^a$ and $\mu_u := \mu \restr (X \setminus R) + (\mu \restr R)^s$, the theorem immediately follows.
\end{proof}

Finally, combining Proposition~\ref{p:finite1} with Theorem~\ref{t:final}, we obtain the following characterization of pure unrectifiability, already stated in the introduction as Theorem~\ref{t:mainthm1} in terms of orthogonal projections in Euclidean space.

\begin{theorem}
\label{t:mainthm1.}
 Let $(X,d)$ be a locally compact metric space, let $(\Pi_\lambda)_{\lambda \in \Lambda}$ be a transversal family of Lipschitz maps, and let $\mu$ be a finite Borel measure on $X$. Assume that for $\mathcal{L}^l$-a.e. $\lambda$ in a Borel set $\Sigma \subset \Lambda$ it holds
\begin{equation}
\label{e:mainthm1.1.}
\mu_{\lambda,y} \text{ is purely atomic for $\mathcal{H}^m$-a.e.\ $y \in \mathbb{R}^m$}.
\end{equation}
Then, for $\mathcal{L}^l$-a.e. $\lambda \in \Sigma$,
\begin{equation}
\label{e:mainthm1.2.}
\Pi_\lambda \mu \perp \mathcal{H}^m
\quad \text{and} \quad
\Pi_\lambda \colon X \to \mathbb{R}^m
\text{ is injective on a set of full $\mu$-measure}
\end{equation}
if and only if $\mu$ is concentrated on a purely $m$-unrectifiable set.
\end{theorem}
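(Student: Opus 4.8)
We may assume $\mathcal{L}^l(\Sigma)>0$, the assertion being vacuous otherwise. The plan is to prove the two implications of the equivalence separately, using Theorems~\ref{p:finite1}, \ref{t:final} and~\ref{t:final2} as black boxes together with the ``rectifiable half'' of the projection theorem. I first record two reductions. By the saturation plus Lebesgue-decomposition argument used in the proof of Theorem~\ref{t:final2}, there is a countably $m$-rectifiable set $R_0\subset X$ such that $\mu$ is concentrated on a purely $m$-unrectifiable set if and only if $\mu_r:=(\mu\restr R_0)^a=0$, where $(\cdot)^a$ denotes the $\mathcal{H}^m$-absolutely continuous part. Next, set $\Sigma^\ast:=\{\lambda\in\Sigma : (\Pi_\lambda\mu)^a\neq0\}$; by~\eqref{e:mainthm1.1.}, for $\mathcal{L}^l$-a.e.\ $\lambda\in\Sigma^\ast$ the set where the density of $(\Pi_\lambda\mu)^a$ with respect to $\mathcal{H}^m$ is positive has positive $\mathcal{H}^m$-measure, and at $\mathcal{H}^m$-a.e.\ of its points $\mu_{\lambda,y}$ is a purely atomic probability measure and hence carries a non-trivial atom. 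Thus hypothesis~\eqref{e:fin(2)} of Theorem~\ref{t:final} holds on $\Sigma^\ast$ up to an $\mathcal{L}^l$-null set.

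\emph{Direction $(\Leftarrow)$.} Assume $\mu$ is concentrated on a purely $m$-unrectifiable set $E$. I claim $\mathcal{L}^l(\Sigma^\ast)=0$. If not, Theorem~\ref{t:final} applied on $\Sigma^\ast$ produces a non-zero submeasure $\mu'\le\mu$ and a countably $m$-rectifiable set $R$ with $\mu'(R)>0$ and, taking $\alpha=1$ in~\eqref{e:abscont11} (the $\Pi_\lambda$ being Lipschitz), $\mu'\restr R\ll\mathcal{H}^m$. But $\mu'(R)=\mu'(R\cap E)$ since $\mu(X\setminus E)=0$, while $\mathcal{H}^m(R\cap E)=0$ since $E$ is purely $m$-unrectifiable, so absolute continuity forces $\mu'(R)=0$, a contradiction. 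Therefore $\mathcal{L}^l(\Sigma^\ast)=0$, i.e.\ $\Pi_\lambda\mu\perp\mathcal{H}^m$ for $\mathcal{L}^l$-a.e.\ $\lambda\in\Sigma$. For the injectivity assertion, apply Theorem~\ref{p:finite1} to $\Sigma$: its second alternative is exactly $\mathcal{L}^l(\Sigma^\ast)>0$, which we have excluded, so its first alternative holds and $\Pi_\lambda$ is injective on a set of full $\mu$-measure for $\mathcal{L}^l$-a.e.\ $\lambda\in\Sigma$. Thus~\eqref{e:mainthm1.2.} holds for $\mathcal{L}^l$-a.e.\ $\lambda\in\Sigma$.

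\emph{Direction $(\Rightarrow)$.} Suppose~\eqref{e:mainthm1.2.} holds for $\mathcal{L}^l$-a.e.\ $\lambda\in\Sigma$; only the singularity $\Pi_\lambda\mu\perp\mathcal{H}^m$ will be used. By the first reduction it suffices to show $\mu_r=0$, so assume $\mu_r\neq0$. Since $\mu_r\le\mu$ we get $(\Pi_\lambda\mu_r)^a\le(\Pi_\lambda\mu)^a=0$ for $\mathcal{L}^l$-a.e.\ $\lambda\in\Sigma$. On the other hand, a non-zero $m$-rectifiable measure satisfies $(\Pi_\lambda\mu_r)^a\neq0$ for $\mathcal{L}^l$-a.e.\ $\lambda\in\Lambda$: for orthogonal projections this is the classical Besicovitch--Federer theorem; in the abstract transversal setting it follows by covering $R_0$, up to an $\mathcal{H}^m$-null set, by bi-Lipschitz images of subsets of $\mathbb{R}^m$, applying the area formula, and invoking transversality condition~(H.2) to deduce that, for $\mathcal{L}^l$-a.e.\ $\lambda$, the Lipschitz maps $\Pi_\lambda\circ f_i$ parametrizing $R_0$ have invertible differential at almost every point, whence $\Pi_\lambda\mu_r$ has a non-trivial $\mathcal{H}^m$-absolutely continuous part. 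Since $\mathcal{L}^l(\Sigma)>0$ this contradicts $(\Pi_\lambda\mu_r)^a=0$ on $\Sigma$; hence $\mu_r=0$ and $\mu$ is concentrated on a purely $m$-unrectifiable set.

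Combining the two directions gives the theorem: if $\mu$ is concentrated on a purely $m$-unrectifiable set then~\eqref{e:mainthm1.2.} holds for $\mathcal{L}^l$-a.e.\ $\lambda\in\Sigma$ by $(\Leftarrow)$; otherwise $\mu_r\neq0$, and the argument of $(\Rightarrow)$ yields $\Pi_\lambda\mu\not\perp\mathcal{H}^m$, hence the failure of~\eqref{e:mainthm1.2.}, for $\mathcal{L}^l$-a.e.\ $\lambda\in\Sigma$; in both cases~\eqref{e:mainthm1.2.} holds if and only if $\mu$ is concentrated on a purely $m$-unrectifiable set, for $\mathcal{L}^l$-a.e.\ $\lambda\in\Sigma$. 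The step I expect to be the main obstacle is the ``rectifiable half'' invoked in $(\Rightarrow)$: trivial for orthogonal projections, but in the abstract transversal framework (where no spatial regularity of $\Pi_\lambda$ is assumed) it requires reducing to bi-Lipschitz pieces of $R_0$ and extracting genericity of non-degeneracy from~(H.2). The remaining ingredients --- the measurability of $\Sigma^\ast$ and of the sets appearing in Theorems~\ref{t:final} and~\ref{p:finite1}, and the passage from~\eqref{e:mainthm1.1.} to hypothesis~\eqref{e:fin(2)} on $\Sigma^\ast$ --- are routine given the appendices cited earlier in the paper.
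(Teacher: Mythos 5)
Your $(\Leftarrow)$ direction is correct and is exactly the combination the paper intends: atomicity upgrades ``$(\Pi_\lambda\mu)^a\neq 0$ on a positive-measure subset of $\Sigma$'' to hypothesis \eqref{e:fin(2)} of Theorem~\ref{t:final}, whose conclusion (a nontrivial $\mu'\restr R\ll\mathcal{H}^m$ with $R$ rectifiable, $\alpha=1$) is incompatible with a purely unrectifiable carrier, and Theorem~\ref{p:finite1} then yields the injectivity. The gaps are in $(\Rightarrow)$, and there are two of them. First, the reduction ``$\mu_r=0$ implies $\mu$ is concentrated on a purely $m$-unrectifiable set'' is false as a general measure-theoretic statement: for $\mu=\mathcal{L}^2\restr[0,1]^2$ and $m=1$ one has $\mu(R)=0$ for every countably $1$-rectifiable $R$ (so $\mu_r=0$), yet by Fubini every Borel set of full measure meets almost every vertical line in a set of positive $\mathcal{H}^1$-measure, so $\mu$ is carried by no purely $1$-unrectifiable set. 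The saturation argument of Theorem~\ref{t:final2} only shows that $\mu\restr(X\setminus R_0)$ \emph{vanishes on} every countably rectifiable set, which is strictly weaker than being \emph{concentrated on} a purely unrectifiable set; bridging that gap requires the hypotheses you explicitly discard. Indeed, the injectivity and singularity at a single good $\lambda$ exhibit a full-measure carrier that is a graph over an $\mathcal{H}^m$-null set, and even such a graph can contain rectifiable pieces of positive measure (take $g\in C^1$ strictly increasing with $g'=0$ on a fat Cantor set $C$; then $\{(g(y),y):y\in C\}$ has positive $\mathcal{H}^1$-measure and projects injectively onto the null set $g(C)$), so a further pruning argument is needed that your proof never supplies.

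Second, the ``rectifiable half'' you invoke is not a consequence of (H.2). Transversality constrains only the $\lambda$-derivatives of $T_{xx'}$ and, as the paper stresses, no spatial regularity of $\Pi_\lambda$ beyond Lipschitz continuity is assumed; nothing forces the spatial differential of $\Pi_\lambda\circ f_i$ to be invertible a.e.\ for a.e.\ $\lambda$. The potential-theoretic consequences of transversality give $\Pi_\lambda\mu\ll\mathcal{H}^m$ only under $I_m(\mu)<\infty$, which fails for $\mathcal{H}^m\restr R$ at the critical exponent, and the classical argument for orthogonal projections goes through approximate tangent planes, i.e.\ the linear structure of $\mathbb{R}^n$. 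Since the paper derives the theorem only from Proposition~\ref{p:finite1} and Theorem~\ref{t:final}, neither of which contains this statement, the step you flag as ``the main obstacle'' is genuinely unproven, and both it and the faulty reduction are load-bearing in your final synthesis (``otherwise $\mu_r\neq0$\dots''). To repair $(\Rightarrow)$ you must actually use the a.e.-injectivity together with $\Pi_\lambda\mu\perp\mathcal{H}^m$ to construct the purely unrectifiable carrier, rather than reduce everything to the vanishing of $\mu_r$.
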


\section{Rectifiability via slicing}

\label{s:recslice}

In this last section we discuss the optimality of the condition $\gamma_{n,m}(\Sigma)>0$ in the following rectifiability via slicing criterion.

\begin{corollary}[Rectifiability via slicing]
\label{c:rectcri1}
Let $(X,d)$ be a locally compact metric space, let $(\Pi_\lambda)_{\lambda \in \Lambda}$ be a transversal family of Lipschitz maps, and let $\mu$ be a finite Borel measure on $X$. Assume that, for every $\lambda \in \Sigma \subset \Lambda$ with $\mathcal{L}^l(\Sigma)>0$, it holds
    \begin{align}
    \label{e:rectcri11}
    \Pi_\lambda \mu \ll \mathcal{H}^m, \quad \mu_{\lambda,y} \text{ is purely atomic for $\mathcal{H}^m$-a.e. $y \in \mathbb{R}^m$}. 
    \end{align}
    Then $\mu$ is a $m$-rectifiable measure.
\end{corollary}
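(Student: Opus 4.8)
The plan is to follow the saturation scheme used in the proof of Theorem~\ref{t:final2}, but to exploit the full force of the hypothesis $\Pi_\lambda\mu\ll\mathcal{H}^m$ for $\lambda\in\Sigma$ in order to annihilate the purely unrectifiable remainder altogether, rather than merely pushing it into a singular-projection part. We may assume $\mu\neq 0$. First I would apply the standard saturation argument (exactly as in the decomposition \eqref{e:saturation}) to produce a countably $m$-rectifiable set $R_0\subset X$ with $\mu\bigl(R'\cap(X\setminus R_0)\bigr)=0$ for every countably $m$-rectifiable $R'\subset X$, and split $\mu=\mu_r\oplus\mu_u$ with $\mu_r:=\mu\restr R_0$ and $\mu_u:=\mu\restr(X\setminus R_0)$.

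The core step is to prove $\mu_u=0$. Arguing by contradiction, suppose $\mu_u\neq 0$. For every $\lambda\in\Sigma$ we have $\Pi_\lambda\mu_u\le\Pi_\lambda\mu\ll\mathcal{H}^m$ and $\Pi_\lambda\mu_u(\mathbb{R}^m)=\mu_u(X)>0$; writing $\Pi_\lambda\mu_u=f_{u,\lambda}\,\mathcal{H}^m$, the set $\{f_{u,\lambda}>0\}$ thus has positive $\mathcal{H}^m$-measure. Since $\mu_u$ is a Borel restriction of $\mu$, its conditional measures with respect to $\Pi_\lambda$ are, $\Pi_\lambda\mu_u$-a.e., normalized restrictions of the measures $\mu_{\lambda,y}$ from \eqref{e:rectcri11}, hence purely atomic and, being probability measures, carrying a non-trivial atom. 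By $\Pi_\lambda\mu_u\ll\mathcal{H}^m$ this holds for $\mathcal{H}^m$-a.e.\ $y\in\{f_{u,\lambda}>0\}$, so the disintegration of $\mu_u$ satisfies condition \eqref{e:fin(2)} on $\Sigma$. Theorem~\ref{t:final}, applied to $\mu_u$, then yields a submeasure $0<\mu'\le\mu_u$ and a countably $m$-rectifiable set $R'$ with $\mu'(R')>0$; but $\mu'(R')\le\mu_u(R')=\mu\bigl(R'\cap(X\setminus R_0)\bigr)=0$, a contradiction. Hence $\mu_u=0$ and $\mu$ is concentrated on the countably $m$-rectifiable set $R_0$.

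Finally I would check that $\mu\ll\mathcal{H}^m$, which together with the concentration on $R_0$ gives $m$-rectifiability in the sense of the paper. Decompose $\mu=\mu^a+\mu^s$ into its $\mathcal{H}^m$-absolutely continuous and singular parts, and let $N$ be a Borel set with $\mathcal{H}^m(N)=0$ on which $\mu^s$ is concentrated. For $\lambda\in\Sigma$ the map $\Pi_\lambda$ is Lipschitz, so $\Pi_\lambda(N)$ is $\mathcal{H}^m$-null and $\Pi_\lambda\mu^s$ is concentrated on it; thus $\Pi_\lambda\mu^s\perp\mathcal{H}^m$ while at the same time $\Pi_\lambda\mu^s\le\Pi_\lambda\mu\ll\mathcal{H}^m$, which forces $\Pi_\lambda\mu^s=0$ and hence $\mu^s(X)=\Pi_\lambda\mu^s(\mathbb{R}^m)=0$. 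Therefore $\mu=\mu^a\ll\mathcal{H}^m$.

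The main obstacle I expect is the bookkeeping in the second paragraph: confirming that hypothesis \eqref{e:fin(2)} of Theorem~\ref{t:final} is inherited by the restricted measure $\mu_u$. This is precisely the point where the absolute continuity $\Pi_\lambda\mu\ll\mathcal{H}^m$ (and not just atomicity of the $\mu_{\lambda,y}$) is indispensable, since it is what converts a ``$\Pi_\lambda\mu_u$-almost every $y$'' statement into a genuine positive-$\mathcal{H}^m$-measure statement, and it is where $\mathcal{L}^l(\Sigma)>0$ makes a single application of Theorem~\ref{t:final} bite. The remaining ingredients (the saturation lemma, the behavior of disintegrations under restriction, and the triviality of a measure that is simultaneously absolutely continuous and singular with respect to $\mathcal{H}^m$) are entirely routine.
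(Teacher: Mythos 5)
Your proof is correct and follows essentially the same route as the paper: both reduce the corollary to Theorem~\ref{t:final} combined with a saturation/maximality argument, the only organizational difference being that you saturate first and then derive a contradiction for the remainder $\mu_u$ (checking, correctly, that $\Pi_\lambda\mu_u\ll\mathcal{H}^m$ upgrades the atomicity of the conditionals of $\mu_u$ to the positive-$\mathcal{H}^m$-measure condition \eqref{e:fin(2)}), whereas the paper extracts a positive-measure rectifiable piece from every Borel set of positive measure and then takes a maximal element. The one genuinely independent ingredient is your derivation of $\mu\ll\mathcal{H}^m$ by projecting the singular part and using that Lipschitz images of $\mathcal{H}^m$-null sets are $\mathcal{H}^m$-null together with $\Pi_\lambda\mu\ll\mathcal{H}^m$; the paper instead obtains absolute continuity from the conclusion \eqref{e:abscont11} of Theorem~\ref{t:final} with $\alpha=1$, and both arguments are valid.
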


\begin{proof}
    Thanks to Theorem \ref{t:final}, every Borel set $B \subset X$ with $\mu(B) >0$ must contain a countably $m$-rectifiable set $R \subset X$ such that $\mu(R)>0$ and $\mu \restr R \ll \mathcal{H}^m$. Since the class of such sets $R$ admits a maximal element with respect to inclusion, the corollary follows.

\end{proof}

\subsubsection*{Counterexample.} We now restrict our analysis to $\mathbb{R}^2$, endowed with the family of orthogonal projections. 
The following example shows that rectifiability cannot, in general, be deduced from condition~\eqref{e:rectcri1} 
when it is required only for a \emph{dense set} of projections.

We claim that there exists a nonzero finite Borel measure $\mu$ on $\mathbb{R}^2$ such that $\mu$ satisfies~\eqref{e:rectcri1} for a dense set of lines $\ell \in \mathrm{Gr}(2,1)$ and
\begin{equation}
    \mu(R) = 0 \quad \text{for every countably $1$-rectifiable set $R \subset \mathbb{R}^2$.}
\end{equation}
To illustrate this, consider the Sierpiński gasket $S \subset \mathbb{R}^2$ constructed by Kenyon in~\cite{ken}. Recall that $S$ is a self-similar $1$-set satisfying the following property: the projection of $S$ onto a line has positive $\mathcal{H}^1$-measure if and only if the projection contains an interval. This occurs precisely for all lines whose slope can be written as $p/q$ in lowest terms, where the integers $p, q$ satisfy $p+q \equiv 0 \pmod{3}$~\cite[Lemma~5]{ken}.  

In addition, the following general result holds for self-similar sets~\cite[Theorem~1.9]{far}: if a self-similar $1$-set $K \subset \mathbb{R}^2$ satisfies $\mathcal{H}^1(\pi(K)) > 0$ for some orthogonal projection $\pi$ onto a line $\ell$ through the origin, then
\[
\pi_\ell (\mathcal{H}^1 \restr K)
\simeq 
  \mathcal{H}^1 \restr \pi_\ell(K).
\]
Hence, by letting $\mu := \mathcal{H}^1 \restr S$, we find that $\mu$ satisfies~\eqref{e:rectcri1} for a countable dense set of lines.

\appendix
\section{Proof of Proposition \ref{p:equivcone}}

To this purpose we need the following lemma (see \cite[Lemma 7.6]{per}).

\begin{lemma}
    \label{l:ps}
Let $U \subset \mathbb{R}^m$ be open and let $T \in C^1(U;\mathbb{R}^m)$ be such that 
$\|DT - \text{Id}\|_\infty \leq \tfrac{1}{2}$ on $B_r(\lambda_0) \subset U$ for some $r > 0$. 
Then 
\[
T : B_{r/3}(\lambda_0) \to T(B_{r/3}(\lambda_0))
\]
is a diffeomorphism, and 
\[
 B_{\rho/2}(T(\lambda_0)) \subset T(B_\rho(\lambda_0))
\quad \text{for every } \rho \in (0,r].
\]
\end{lemma}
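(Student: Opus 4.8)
The plan is to prove Lemma~\ref{l:ps}, a quantitative inverse function theorem: if $T \in C^1(U;\mathbb{R}^m)$ satisfies $\|DT - \mathrm{Id}\|_\infty \le \tfrac12$ on $B_r(\lambda_0)$, then $T$ restricted to $B_{r/3}(\lambda_0)$ is a diffeomorphism onto its image, and $B_{\rho/2}(T(\lambda_0)) \subset T(B_\rho(\lambda_0))$ for every $\rho \in (0,r]$. I would proceed by the classical contraction-mapping argument, being careful about which balls the estimates are valid on.

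\textbf{Step 1: Lipschitz bounds on $T$ and $T - \mathrm{Id}$.} From $\|DT - \mathrm{Id}\|_\infty \le \tfrac12$ on $B_r(\lambda_0)$ and the mean value inequality along segments (which stay inside the convex set $B_r(\lambda_0)$), I would record that for all $\lambda, \mu \in B_r(\lambda_0)$,
\[
|(T - \mathrm{Id})(\lambda) - (T - \mathrm{Id})(\mu)| \le \tfrac12 |\lambda - \mu|,
\qquad
|T(\lambda) - T(\mu)| \ge \tfrac12 |\lambda - \mu|.
\]
The second inequality follows from the first by the reverse triangle inequality $|T(\lambda) - T(\mu)| \ge |\lambda - \mu| - |(T-\mathrm{Id})(\lambda) - (T-\mathrm{Id})(\mu)|$. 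This already gives injectivity of $T$ on $B_r(\lambda_0)$, hence on $B_{r/3}(\lambda_0)$. Moreover $DT(\lambda)$ is invertible everywhere on $B_r(\lambda_0)$ since $\|DT(\lambda) - \mathrm{Id}\| \le \tfrac12 < 1$, so by the standard inverse function theorem $T$ is an open map and $T^{-1}$ is $C^1$ on the image; combined with injectivity, $T : B_{r/3}(\lambda_0) \to T(B_{r/3}(\lambda_0))$ is a $C^1$ diffeomorphism.

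\textbf{Step 2: The covering inclusion via a fixed-point argument.} Fix $\rho \in (0,r]$ and let $z \in B_{\rho/2}(T(\lambda_0))$; I must produce $\lambda \in B_\rho(\lambda_0)$ with $T(\lambda) = z$. Define $\Phi(\lambda) := \lambda - T(\lambda) + z = z - (T - \mathrm{Id})(\lambda)$. By Step 1, $\Phi$ is a $\tfrac12$-contraction on $\overline{B}_\rho(\lambda_0)$ wherever it is defined, and I claim it maps $\overline{B}_\rho(\lambda_0)$ into itself: for $\lambda \in \overline{B}_\rho(\lambda_0)$,
\[
|\Phi(\lambda) - \lambda_0|
= |z - (T-\mathrm{Id})(\lambda) - \lambda_0|
\le |z - T(\lambda_0)| + |(T-\mathrm{Id})(\lambda) - (T-\mathrm{Id})(\lambda_0)|
< \tfrac{\rho}{2} + \tfrac12 |\lambda - \lambda_0|
\le \tfrac{\rho}{2} + \tfrac{\rho}{2} = \rho,
\]
where I used $\Phi(\lambda_0) = z - (T-\mathrm{Id})(\lambda_0)$ so that $(T-\mathrm{Id})(\lambda_0) = T(\lambda_0) - \lambda_0$ and $|z - T(\lambda_0)| < \rho/2$. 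Hence $\Phi$ is a contraction of the complete metric space $\overline{B}_\rho(\lambda_0)$ into itself; by Banach's fixed point theorem it has a unique fixed point $\lambda$, which in fact lies in the open ball $B_\rho(\lambda_0)$ by the strict inequality above, and $\Phi(\lambda) = \lambda$ is precisely $T(\lambda) = z$. This proves $B_{\rho/2}(T(\lambda_0)) \subset T(B_\rho(\lambda_0))$.

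I do not expect a serious obstacle here — this is textbook quantitative inverse function theorem material. The only point requiring mild care is the bookkeeping of which ball each estimate holds on: the Lipschitz estimates are only valid on the convex ball $B_r(\lambda_0)$, so in Step 2 one must check that the fixed-point iteration never leaves $\overline{B}_\rho(\lambda_0) \subseteq \overline{B}_r(\lambda_0)$, which is exactly what the self-map computation verifies; and the factor $r/3$ in the diffeomorphism statement is only needed to guarantee that the image of $B_{r/3}(\lambda_0)$ is reached from within $B_r(\lambda_0)$ (indeed, applying the covering inclusion with $\rho = r/3$ would even give $B_{r/6}(T(\lambda_0)) \subset T(B_{r/3}(\lambda_0))$, though the diffeomorphism claim itself only uses injectivity plus openness, both established on all of $B_r(\lambda_0)$).
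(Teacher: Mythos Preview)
Your proof is correct and is the standard contraction-mapping argument for the quantitative inverse function theorem. The paper does not actually prove Lemma~\ref{l:ps}; it merely cites it from Peres--Schlag \cite[Lemma~7.6]{per}, so there is no in-paper proof to compare against.

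One minor bookkeeping point: when $\rho = r$ the closed ball $\overline{B}_r(\lambda_0)$ need not be contained in $U$ (only $B_r(\lambda_0)\subset U$ is assumed), so strictly speaking $\Phi$ may not be defined on all of $\overline{B}_r(\lambda_0)$. This is easily fixed by running the Picard iteration starting from $\lambda_0$: since $|\lambda_1-\lambda_0|=|z-T(\lambda_0)|<\rho/2$ and $|\lambda_{n+1}-\lambda_n|\le\tfrac12|\lambda_n-\lambda_{n-1}|$, one has $|\lambda_n-\lambda_0|<2|z-T(\lambda_0)|<\rho\le r$ for all $n$, so every iterate stays in the open ball $B_r(\lambda_0)$ where all estimates are valid, and the limit lies in $B_\rho(\lambda_0)$ as required.
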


We are now in position to prove Proposition \ref{p:equivcone}.

\begin{proof}[Proof of Proposition \ref{p:equivcone}]

 We start by proving the proposition with a weaker property than \eqref{e:equivcone1234}, namely, 
 \begin{equation}
 \label{e:equicone}
 \Phi_{V,\lambda}^{-1}\big(\mathbb{B}_{s/c}(\lambda)\big) \subset X(x,\lambda,s) \cap U(V) \subset  \Phi_{V,\lambda}^{-1}\big(\mathbb{B}_{cs}(\lambda)\big).
 \end{equation}
 Notice that the difference from the statement of the proposition is that, in condition \eqref{e:equicone}, we allow the map $\lambda \mapsto \Phi_{V,\lambda}$ to depend fully on $\lambda \in \Lambda$, and not only on the orthogonal complement $V^\perp$ of $V$.

 \vspace{2mm}
 
\emph{-Step 1: Proof of \eqref{e:curvradpro1}}. We start by defining the Borel set $U(V)$. In view of property \eqref{e:h2}, we apply the Cauchy-Binet formula to compute the quantity $|J_\lambda T_{xx'}(\lambda_0)|$, to infer that for every $x' \in X(x,\lambda_0,C')$ there exists a coordinate $m$-plane $V$ such that
\begin{equation}
\label{e:opencon1}
|\text{det} D_\lambda (T_{xx'} \restr (V + \lambda_0))(\lambda_0)| > \frac{C'}{2c_{l,m}}
\end{equation}
for a dimensional constant $c_{l,m}>0$. Therefore, by setting
\[
U(V):= \{x' \in X(x,\lambda_0,C') \ | \ \eqref{e:opencon1} \text{ holds}  \}, \ \ \text{ for every $V \in \mathcal{V}_{l,m}$},
\]
we immediately deduce that 
\begin{equation}
\label{e:coverequicone}
X(x,\lambda_0,C') \subset \bigcup_{V \in \mathcal{V}_{l,m}} U(V).
\end{equation}
  The condition in \eqref{e:curvradpro1.97} is then verified by choosing $s_0 \leq C'/2L$, due to the following inclusion
  \begin{equation}
\label{e:cont9}
X(x, \lambda, s_0) \subset X(x, \lambda_0, C'), \ \ \text{for every $\lambda \in \mathbb{B}_{s_0}(\lambda_0)$}.
\end{equation}

In addition, we exploit \eqref{e:h3} to find $s'_0 \in (0,\min\{\delta/2,C'\})$ such that, for every $V \in \mathcal{V}_{l,m}$, and for every $x' \in X(x,\lambda_0, C') \cap U(V)$, it holds 
\begin{equation}
\label{e:opencon}
|\text{det} D_\lambda (T_{xx'} \restr (V + \lambda))(\lambda)| > \frac{C'}{4c_{l,m}}, \ \ \text{ for every $\lambda \in \mathbb{B}_{s'_0}(\lambda_0)$}.
\end{equation}

In order to construct the map $\Phi_{V,\lambda}$ fulfilling \eqref{e:curvradpro1}, fix $V \in \mathcal{V}_{l,m}$. We claim that there exist $s_0 \in (0,s'_0)$ and a constant $d >0$, such that for any fixed $\lambda \in \mathbb{B}_{s_0}(\lambda_0)$ the following statement holds true: each $x' \in X(x, \lambda, s_0/24d) \cap U(V)$ corresponds to a unique $\lambda_{x'} \in \mathbb{B}_{s_0}(\lambda) \cap (V + \lambda)$ satisfying
\[
T_{xx'}(\lambda_{x'}) = 0.
\]
To prove the claim, we verify the following assertion: there exists $s_0 \in (0,\min \{s'_0, C'/2L\})$ such that for every $\lambda \in \mathbb{B}_{s_0}(\lambda_0)$ and $x' \in X(x,\lambda,s_0/24d) \cap U(V)$ we have

\begin{equation}
\label{e:prop(1)}
\text{$T_{xx'} \restr [\mathbb{B}_{2s_0}(\lambda) \cap (V + \lambda)]$ is a $C^1$-diffeomorphism with its image}
\end{equation}
and
\begin{equation}
\label{e:prop(2)}
0 \in \text{Im}\big( T_{xx'} \restr [\mathbb{B}_{s_0}(\lambda) \cap (V + \lambda)]\big).
\end{equation}

\vspace{2mm}

Indeed, for every $\lambda \in \mathbb{B}_{\delta/2}(\lambda_0)$ and every $x' \in X(x,\lambda_0,C') \cap U(V)$, we can define the map $\tilde{T} \colon V+\lambda \to \mathbb{R}^m$ as 
\[
\tilde{T}(\lambda'):= A^{-1}_{xx'}(\lambda) \, T_{xx'}(\lambda'),
\]
where $A_{xx'}(\lambda):=D_\lambda (T_{xx'} \restr V + \lambda)(\lambda)$.
If 
\[
s \in (0, \min\{s'_0,C'/8c_{l,m}C''\})
\]
where $C''$ is the constant appearing in \eqref{e:h3}, then, for every $\lambda \in \mathbb{B}_{\delta/2}(\lambda_0)$, for every $\lambda' \in \mathbb{B}_s(\lambda) \cap (V + \lambda)$ and for every $x' \in X(x,\lambda_0,C') \cap U(V)$, it holds
\[
|D_\lambda \tilde{T}(\lambda') - \text{Id}| = |D_\lambda \tilde{T}(\lambda') - D_\lambda \tilde{T}(\lambda)| \leq \text{Lip}(\tilde{T})|\lambda - \lambda'| \leq \frac{1}{2},
\]
where we used $\text{Lip} (\tilde{T}) \leq 4c_{l,m}C''/C'$. Therefore, Lemma \ref{l:ps} tells us that, if we let 
\[
s''_0 := \frac{1}{2} \min\{s'_0,C'/8c_{l,m}C''\},
\]
then, for every $\lambda \in \mathbb{B}_{s''_0}(\lambda_0) \subset \mathbb{B}_{\delta/2}(\lambda_0)$ and for every $x' \in X(x,\lambda_0,C') \cap U(V)$, it holds
\begin{equation}
\label{e:ps1}
\tilde{T} \colon B_{s''_0/3}(\lambda) \to \tilde{T}( B_{s''_0/3}(\lambda))
\end{equation}
is a $C^1$-diffeomorphism and
\begin{equation}
\label{e:ps2}
B_{\rho/2}(\tilde{T}(\lambda)) \subset \tilde{T}(B_\rho(\lambda)), \ \ \text{ for every $\rho \in (0,s''_0]$}.
\end{equation}

 Moreover, by exploiting \eqref{e:ps2} and the very definition of the map $\tilde{T}$, we infer that, for every $\lambda' \in \mathbb{B}_{s''_0}(\lambda) \cap (V + \lambda)$ and every $x' \in X(x,\lambda_0,C') \cap U(V)$, it holds
\begin{equation}
    \label{e:ps3}
    A_{xx'}(\lambda)B_{\rho/2}(\tilde{T}(\lambda)) \subset T_{xx'}(B_\rho(\lambda)), \ \ \text{ for every $\rho \in (0,s''_0]$}.
\end{equation}
But since \eqref{e:h2} holds, we have $B_{d \rho}(T_{xx'}(\lambda)) \subset A_{xx'}(\lambda)B_{\rho/2}(\tilde{T}(\lambda))$ for every $\rho \in (0,s_0)$, for a constant $d$ that only depends on $C'$ and $C''$. Therefore, we infer that, for every $\lambda \in \mathbb{B}_{s''_0}(\lambda_0)$, for every $\lambda' \in \mathbb{B}_{s''_0}(\lambda) \cap (V + \lambda)$, and for every $x' \in X(x,\lambda_0,C') \cap U(V)$, it holds
\begin{equation}
    \label{e:ps4}
    B_{\rho/d}(T_{xx'}(\lambda)) \subset T_{xx'}(B_\rho(\lambda)), \ \ \text{ for every $\rho \in (0,s''_0]$}.
\end{equation}

Finally, by setting $s_0:= s''_0/6$, and by recalling that $s''_0 \leq s'_0 \leq C'/2L$, we can make use of \eqref{e:cont9} to infer, from property \eqref{e:ps1}, the validity of \eqref{e:prop(1)}. Moreover, property \eqref{e:prop(2)} follows by virtue of \eqref{e:ps4} (recall again \eqref{e:cont9}) with the choice $\rho= s_0:= s''_0/6$. The claim is thus proved

For any fixed $\lambda \in \mathbb{B}_{s_0}(\lambda_0)$ we now turn to define 
\[
\Phi_{V,\lambda} \colon X(x,\lambda,s_0/24d) \cap U(V) \to \mathbb{B}_{s_0}(\lambda) \cap (V + \lambda)  \quad \text{ as } \quad \Phi_{V,\lambda}(x'):= \lambda_{x'}.
\]
By construction \eqref{e:curvradpro1} is satisfied. 

\vspace{2mm}

\emph{-Step 2: Proof of \eqref{e:equicone}.} Let $\lambda \in \mathbb{B}_{s_0}(\lambda_0)$, let $V \in \mathcal{V}_{l,m}$, and let $s \in (0,s_0/24d)$. 

To prove the first inclusion assume that $x' \in \Phi_{V,\lambda}^{-1}(\mathbb{B}_{s/L}(\lambda))$. Clearly $x' \in U(V)$. Moreover, from the very definition of the map $\Phi_{V,\lambda}$ it follows that $T_{xx'}(\Phi_{V,\lambda}(x'))=0$. Hence, we can estimate
\[
|T_{xx'}(\lambda)| = |T_{xx'}(\lambda) - T_{xx'}(\Phi_{V,\lambda}(x'))| \leq L |\lambda - \Phi_{V,\lambda}(x')| < L \frac{s}{L} < s,
\]
which immediately implies that $x' \in X(x,\lambda,s)$.  This proves the first inclusion with $c = L$

To prove the second inclusion in \eqref{e:equicone}, assume that $x' \in  X(x,\lambda,s) \cap U(V)$. Then, we already know from \eqref{e:cont9} that, with the choices of $\lambda$ and $s$ at the beginning of Step 2 it holds $X(x,\lambda,s) \subset X(x,\lambda_0,C')$. Therefore, we have that $T_{xx'}(\Phi_{V,\lambda}(x'))=0$ and $\Phi_{V,\lambda}(x') \in \mathbb{B}_{s_0}(\lambda_0)$ by construction. Hence, we can make use of property \eqref{e:ps4} with $\rho = 24ds$ to infer that there exists $\tilde{\lambda} \in \mathbb{B}_{2ds}(\lambda) \cap (V + \lambda)$ such that $T_{xx'}(\tilde{\lambda})=0$. But recall that $T_{xx'} \restr [\mathbb{B}_{s_0}(\lambda) \cap (V + \lambda)]$ is a diffeomorphism with its image. Therefore, we deduce that $\Phi_{V,\lambda}(x') = \tilde{\lambda}$, and, in particular, $\Phi_{V,\lambda}(x') \in \mathbb{B}_{24ds}(\lambda) \cap (V + \lambda)$. This proves also the second inclusion with $c= 24d$. Condition \eqref{e:equicone} follows by setting
\begin{equation}
\label{e:defc}
c:= \max \{L,48d\}.
\end{equation}

\vspace{2mm}

\emph{-Step 3: Proof of \eqref{e:equivcone1234}.} Now we conclude the proof of the proposition. Notice that, if $\lambda \in \mathbb{B}_{s_0/cL}(\lambda_0)$ and $x' \in X(x,\lambda,s_0/c)$, then 
\[
|T_{xx'}(\lambda_0 + \lambda^\bot)| \leq L |\lambda - \lambda_0| + T_{xx'}(\lambda) <  \frac{2s_0}{c}.
\]
 As a consequence, for every $\lambda \in \mathbb{B}_{s_0/cL}(\lambda_0)$ we have
\[
X(x,\lambda,s_0/c) \subset X(x,\lambda_0 + \lambda^\bot,2s_0/c).
\]

Therefore, for any given $\lambda \in \mathbb{B}_{s_0/cL}(\lambda_0)$, the map $\Phi_{V,\lambda_0+\lambda^\bot}$ is well defined on the set $X(x,\lambda,s_0/c) \cap U(V)$ and 
\begin{equation}
\label{e:equivcone123456}
T_{xx'}(\Phi_{V,\lambda}(x'))=T_{xx'}(\Phi_{V,\lambda_0+\lambda^\bot}(x'))=0, \ \ \text{for every $x' \in X(x,\lambda,s_0/c) \cap U(V)$}.
\end{equation}
In addition recall that, by construction, $\Phi_{V,\lambda_0+\lambda^\bot}$ takes values in $\mathbb{B}_{s_0}(\lambda_0 + \lambda^\bot) \cap (V+\lambda)$ (we used that $V +\lambda = V + \lambda_0 + \lambda^\bot$). Moreover, by using property \eqref{e:ps4} (recall also that $c >d$ by \eqref{e:defc}), the condition $x' \in X(x,\lambda,s_0/c) \cap U(V)$ implies $\Phi_{V,\lambda}(x') \in \mathbb{B}_{s_0}(\lambda) \cap (V + \lambda)$. This means that, if $\lambda \in \mathbb{B}_{s_0/cL}(\lambda_0)$ then (recall that $L \geq 1$ and $c \geq 2$)
\[
\Phi_{V,\lambda}(x') \in \mathbb{B}_{s_0}(\lambda_0 + \lambda^\bot) \cap (V+\lambda), \ \ \text{ for every $x' \in X(x,\lambda,s_0/c) \cap U(V)$}.
\]
But since property \eqref{e:prop(1)} tells us that $T_{xx'} \restr [\mathbb{B}_{2s_0}(\lambda_0 + \lambda^\bot) \cap (V + \lambda)]$ is a diffeomorphism with its image, we finally deduce that, for every $\lambda \in \mathbb{B}_{s_0/cL}(\lambda_0)$, it holds
\begin{align}
\label{e:equicone10}
&\Phi_{V,\lambda}(x')=\Phi_{V,\lambda_0+\lambda^\bot}(x'), \ \ \text{for every $x' \in X(x,\lambda,s_0/c) \cap U(V)$}.
\end{align}
Hence, from \eqref{e:equicone} and \eqref{e:equicone10} we deduce for every $\lambda \in \mathbb{B}_{s_0/2cL}(\lambda_0)$ and every $s \in (0,s_0/2c)$ it holds
\begin{align}
  \Phi^{-1}_{V,\lambda_0 + \lambda^\bot}(\mathbb{B}_{s/c}(\lambda))  &=  X(x,\lambda,s_0/c) \cap \Phi^{-1}_{V,\lambda_0 + \lambda^\bot}(\mathbb{B}_{s/c}(\lambda))  \\
   &= X(x,\lambda,s_0/c)  \cap \Phi^{-1}_{V,\lambda}(\mathbb{B}_{s/c}(\lambda)) \\
   &\subset  \Phi^{-1}_{V,\lambda}(\mathbb{B}_{s/c}(\lambda)) \subset X(x,\lambda,s) \cap U(V),
\end{align}
where, in the first equality, we deduced from \eqref{e:equicone} that $\Phi^{-1}_{V,\lambda_0 + \lambda^\bot}(\mathbb{B}_{s/c}(\lambda)) \subset X(x,\lambda,s_0/c)$ provided $\lambda \in \mathbb{B}_{s_0/2cL}(\lambda_0)$ and $s \in (0,s_0/2c)$. Finally, from \eqref{e:equicone} and \eqref{e:equicone10} we also deduce that for every $\lambda \in \mathbb{B}_{s_0/2cL}(\lambda_0)$ and every $s \in (0,s_0/2c)$ it holds  
\begin{align}
    X(x,\lambda,s) \cap U(V) &\subset   X(x,\lambda,s_0/c) \cap \Phi^{-1}_{V,\lambda}(\mathbb{B}_{cs}(\lambda))\\ 
    &=  X(x,\lambda,s_0/c)  \cap \Phi^{-1}_{V,\lambda_0 + \lambda^\bot}(\mathbb{B}_{cs}(\lambda)) \\
    &\subset  \Phi^{-1}_{V,\lambda_0 + \lambda^\bot}(\mathbb{B}_{cs}(\lambda)).
\end{align}
The property \eqref{e:equivcone1234} is proved.

\vspace{2mm}

\emph{-Step 4: Proof of \eqref{e:contPsi}.} Assume that $(\lambda_i) \subset \mathbb{B}_{s_0}(\lambda_0)$ and $(x_i) \subset X$ are sequences such that $x_i \in X(x,\lambda_i,s_0/c) \cap U(V)$ for every $i=1,2,\dotsc$, and such that $(x_i,\lambda_i) \to (\tilde{x},\tilde{\lambda})$ for some $\tilde{\lambda} \in \mathbb{B}_{s_0}(\lambda_0)$ and $\tilde{x} \in X(x,\tilde{\lambda},s_0/c) \cap U(V)$ as $i \to \infty$. Since $\Phi_{V,\lambda_i}(x_i) \in B_{s_0}(\lambda_i) \cap (V + \lambda_i) \subset \mathbb{B}_{2s_0}(\lambda_0)$ for every $i=1,2,\dotsc$, up to pass to a not relabeled subsequence, we may assume that $\Phi_{V,\lambda_i}(x_i) \to \lambda' \in B_{2s_0}(\tilde{\lambda}) \cap (V + \tilde{\lambda})$ as $i \to \infty$. By using the continuity of the map $\Pi \colon X \times \Lambda \to \mathbb{R}^m$, we write
\[
0 = \lim_{i \to \infty} T_{xx_i}(\Phi_{V,\lambda_i}(x_i))= T_{x\tilde{x}}(\lambda'),
\]
from which we deduce $\tilde{\lambda}= \Phi_{V,\tilde{\lambda}}(\tilde{x})$ thanks to property \eqref{e:prop(1)} and \eqref{e:curvradpro1}. Property \eqref{e:contPsi} is proved.
\end{proof}

\section{Measurability}

\subsubsection{Measurability of the conditions in Proposition \ref{p:disretr1}.}
\label{a:measprod}
 The map
\begin{equation}
\label{e:appendix2}
(x,\lambda) \longmapsto \mu(X(x,\lambda,s))
\end{equation}
is $(\mu \otimes \mathcal{L}^l)$-measurable for every $s \in (0,1)$. Indeed, take $\varphi_\delta \in C^0_c((0,1)) $ such that $\varphi_\delta \equiv 1$ on $[-\delta,\delta]$ for some $\delta \in (0,1)$. Define the functions $f_\delta \colon X \times \Lambda \times X \setminus \{x=x'\}  \to [0,\infty)$ as
\[
f_\delta(x,\lambda,x'):= \varphi_\delta(T_{xx'}(\lambda)/s),
\]
where $T_{xx'}$ is defined in \eqref{e:deftxx'}.
Clearly, $f_\delta(x,\lambda,\cdot) =1$ on $X(x,\lambda,s\delta)$ and $f_\delta(x,\lambda,\cdot) =0$ on $X \setminus X(x,\lambda,s)$. while. Moreover, since the continuity assumption made on $\Pi \colon X \times \Lambda \to \mathbb{R}^m$ implies the continuity of $f_\delta$, we can apply Lebesgue's dominated convergence theorem to infer
\[
\lim_{i \to \infty} \int_X f_\delta(x_i,\lambda_i,x') \bigg[1 \wedge \frac{d(x',B_\delta(x_i))}{\delta}\bigg]\, d\mu(x') =  \int_X f_\delta(x,\lambda,x') \bigg[1 \wedge \frac{d(x',B_\delta(x))}{\delta}\bigg]\, d\mu(x'),
\]
whenever $(x_i,\lambda_i) \to (x,\lambda)$ as $i \to \infty$. Therefore, the monotonicity property of measures gives 
\[
\int_X f_\delta(x,\lambda,x') \bigg[1 \wedge \frac{d(x',B_\delta(x))}{\delta}\bigg]\, d\mu(x') \nearrow \mu(X(x,\lambda,s)), \ \ \text{ for every $(x,\lambda)$ as $\delta \searrow 0$},
\]
and we deduce the desired measurability of the map in \eqref{e:appendix2} for every $s \in (0,1)$.

In addition, we can make use once again of the monotonicity property of measures to infer that
\[
 \sup_{s \in (0,1)} s^{-m}\mu(X(x,\lambda,s))
= \sup_{s \in \mathbb{Q} \cap (0,1)} s^{-m}\mu(X(x,\lambda,s)).
\]
Therefore, the map
\[
\Psi(x,\lambda) := \sup_{s \in (0,1)} s^{-m}\mu(X(x,\lambda,s))
\]
is $(\mu \otimes \mathcal{L}^l)$-measurable. From the proof of Proposition~\ref{p:disretr1}, we know that every pair $(x,\lambda)$ with $\Psi(x,\lambda) < \infty$ also satisfies \eqref{e:locweak} and \eqref{e:equ21}. Moreover, Proposition~\ref{p:fcv1} ensures that $\Psi(x,\lambda) < \infty$ holds for $(\mu \otimes \mathcal{L}^l)$-a.e.\ $(x,\lambda)$. Consequently, the set of points $(x,\lambda)$ satisfying \eqref{e:locweak} and \eqref{e:equ21} is $(\mu \otimes \mathcal{L}^l)$-measurable.

\subsubsection{Measurability of the condition in Lemma \ref{l:keyalternative1}.}
\label{a:measkeyalt}

To prove the $(\mu \otimes \mathcal{L}^l)$-measurability of the sets of points $(x,\lambda) \in X \times \Lambda$ satisfying condition \eqref{e:3limit1}, we observe that the map
    \[
    \rho \mapsto \mu(X(x,\lambda,s,\rho)) 
    \]
    is right continuous, for every $(x,\lambda,s)$, and that, thanks to the continuity of $\Pi \colon X \times \Lambda \to \mathbb{R}^m$, the map
    \[
    s \mapsto \mu(X(x,\lambda,s,\rho))
    \]
    is lower semi-continuous, for every $(x,\lambda,\rho)$. This implies that
    \[
   (x,\lambda) \mapsto \sup_{0<\rho<\delta} (s\rho)^{-m}\mu(X(x,\lambda,s,\rho)) =  \sup_{\rho \in (0,\delta) \cap \mathbb{Q}} (s\rho)^{-m}\mu(X(x,\lambda,s,\rho))
    \]
    is $(\mu \otimes \mathcal{L}^l)$-measurable, for every $(s,\delta)$, and that the map
   \[
   s \mapsto \sup_{0<\rho<\delta} (s\rho)^{-m}\mu(X(x,\lambda,s,\rho)) 
    \] 
    is lower semi-continuous, for every $(x,\lambda,\delta)$. Hence, by exploiting the lower semi-continuity, given $s \in (0,1)$, we find $(q_k) \subset \mathbb{Q} \cap (0,1) $ such that $q_k \to s$ and hence 
    \begin{align*}
       \liminf_{k} \sup_{0<\rho<\delta} (q_k\rho)^{-m}\mu(X(x,\lambda,q_k,\rho)) \geq  \sup_{0<\rho<\delta} (s\rho)^{-m}\mu(X(x,\lambda,s,\rho)).
    \end{align*}
    But this immediately implies, for every $k=1,2,\dotsc$., that 
    \[
   \sup_{0 < s < 1/k} \sup_{0<\rho<\delta} (s\rho)^{-m}\mu(X(x,\lambda,s,\rho)) =\sup_{s \in (0,1/k) \cap \mathbb{Q}} \sup_{0<\rho<\delta} (s\rho)^{-m}\mu(X(x,\lambda,s,\rho)),
    \]
    for every couple of points $(x,\lambda)$. Therefore, by virtue of Appendix \ref{a:measprod}, we deduce that
    \[
    \Psi_{k,\delta}(x,\lambda) := \sup_{0 < s < 1/k} \sup_{0<\rho<\delta} (s\rho)^{-m}\mu(X(x,\lambda,s,\rho)),
    \]
    is $(\mu \otimes \mathcal{L}^l)$-measurable for every $k=1,2,\dotsc$ and every $\delta>0$.

    Finally, the desired measurability follows by observing that the map 
    \begin{align*}
     \Psi(x,\lambda):= \lim_{j \to \infty}  \lim_{k \to \infty} \Psi_{k,1/j}(x,\lambda) &= \lim_{j \to \infty} \lim_{k \to \infty} \sup_{0 <s < 1/k} \sup_{0 < \rho <1/j} \frac{\mu(X(x,\lambda,s,\rho) )}{(s\rho)^m} \\
     & =\lim_{j \to \infty} \limsup_{s \to 0+} \sup_{0 < \rho <1/j} \frac{\mu(X(x,\lambda,s,\rho) )}{(s\rho)^m} \\
     &  =\lim_{\delta \to 0^+} \limsup_{s \to 0+} \sup_{0 < \rho <\delta} \frac{\mu(X(x,\lambda,s,\rho) )}{(s\rho)^m},
    \end{align*}
    is $(\mu \otimes \mathcal{L}^l)$-measurable.

\subsubsection{Measurability of the set $\{\{(y,\lambda) \in \mathbb{R}^m \times \Lambda \ | \ y \in Y_\lambda \}\}$.}
\label{a:measYlambda}
To see that the set $\{(y,\lambda) \in \mathbb{R}^m \times \Lambda \ | \ y \in Y_\lambda \}$ is Borel measurable, notice that, the continuity of the map $\Pi \colon X \times \Lambda \to \mathbb{R}^m$ tells us that the map
\[
(y,\lambda) \mapsto (\Pi_\lambda\mu)(B^m_r(y)), 
\]
is Borel measurable for every $r >0$. Hence, the map $(y,\lambda) \mapsto f_\lambda(y)$ defined as
\[
f_\lambda(y) :=
\begin{cases}
    \lim_{r \to 0^+} r^{-m}(\Pi_\lambda\mu)(B^m_r(y)), &\text{ if the limit exists}\\
    0 &\text{ otherwise},
\end{cases}
\]
is Borel measurable. We can hence mollify the function $f(y,\lambda)$, to obtain a family $f_\epsilon$ of continuous functions such that $f_\epsilon(y,\lambda) \to f(y,\lambda)$ for $(\mathcal{H}^m \otimes \mathcal{L}^l)$-a.e. $(y,\lambda)$ as $\epsilon \to 0^+$. Since one easily verify that 
\[
(y,\lambda) \mapsto \int_{B^m_r(y)} |f_\epsilon(y,\lambda)-a|\, d\mathcal{H}^m
\]
is Borel measurable for every $\epsilon >0$ every $r >0$, and every $r>0$, as well as
\[
\lim_{\epsilon \to 0^+} \int_{B^m_r(y)} |f_\epsilon(y,\lambda)-a|\, d\mathcal{H}^m = \int_{B^m_r(y)} |f_\lambda(y)-a|\, d\mathcal{H}^m, 
\]
it follows that the map
\[
(y,\lambda) \mapsto \int_{B^m_r(y)} |f_\lambda(y)-a|\, d\mathcal{H}^m
\]
is Borel measurable for every $r >0 $ and every $a \in \mathbb{R}$.

Moreover, by using the monotonicity property for measure, we have that
\[
\limsup_{r \to 0+ } \frac{1}{r^m}\int_{B^m_r(y)} |f_\lambda(y)-a|\, d\mathcal{H}^m=\limsup_{\substack{q \to 0+ \\ q \in \mathbb{Q}}}  \frac{1}{q^m}\int_{B^m_q(y)} |f_\lambda(y)-a|\, d\mathcal{H}^m, 
\]
for every couple $(y,\lambda)$. Therefore, the set $A(a,n)$ made of those couple $(y,\lambda)$ such that
\[
\limsup_{r \to 0+ } \frac{1}{r^m}\int_{B^m_r(y)} |f_\lambda(y)-a|\, d\mathcal{H}^m< \frac{1}{n},  
\]
is Borel measurable. Eventually, we notice that 
\[\{(y,\lambda) \in \mathbb{R}^m \times \Lambda \ | \ y \in Y_\lambda \} = \bigcap_{n \geq 1} \bigcup_{a \in \mathbb{Q}} A(n,a),
\]
from which the desired property follows.

\section*{Acknowledgements}
I would like to express my sincere gratitude to Andrea Marchese for his generous investment of time and insightful discussions, which greatly enriched this work. His encouragement and enthusiasm were invaluable in motivating me to pursue this research.

This paper has been supported by the Austrian Science Fund (\textbf{FWF}) projects \textbf{Y1292} and \textbf{P35359}.


\begin{thebibliography}{99}







\bibitem{almtas} Almi, S., and E. Tasso. \emph{A general criterion for jump set slicing and applications}. arXiv preprint


\bibitem{ags} Ambrosio, Luigi, Nicola Gigli, and Giuseppe Savar\'e. \emph{Gradient flows: in metric spaces and in the space of probability measures}. Springer Science \& Business Media, 2005.

\bibitem{ak1} Ambrosio, L., and B. Kirchheim. \emph{Currents in metric spaces}. Acta Mathematica 185.1 (2000): 1-80.


\bibitem{bal} Balogh, Z. M., and A. Iseli. \emph{Marstrand type projection theorems for normed spaces}. Journal of Fractal Geometry 6.4 (2019): 367-392.


\bibitem{bara} Barański, K., Y. Gutman, and A. Śpiewak. \emph{A probabilistic Takens theorem}. Nonlinearity 33.9 (2020): 4940.


\bibitem{bate1} Bate, David. \emph{Purely unrectifiable metric spaces and perturbations of Lipschitz functions}. Acta Math 224 (2020): 1-65.


\bibitem{bate2} Bate, D., M. Csörnyei, and B. Wilson. \emph{The Besicovitch-Federer projection theorem is false in every infinite-dimensional Banach space}. Israel Journal of Mathematics 220 (2017): 175-188.



\bibitem{chatol} Chang, A., and X. Tolsa. \emph{Analytic capacity and projections}. Journal of the European Mathematical Society 22.12 (2020): 4121-4159.



\bibitem{depa2} De Pauw, Thierry. \emph{An example pertaining to the failure of the Besicovitch-Federer structure theorem in Hilbert space}. Publicacions Matemàtiques, vol. 61, no. 1, 2017, pp. 153-73.


\bibitem{far} Farkas, \'Abel. \emph{Interval projections of self-similar sets.} Ergodic Theory and Dynamical Systems 40.1 (2020): 194-212.


\bibitem{fed1}
Federer, Herbert. \emph{Geometric measure theory}. Springer, 1996.


\bibitem{gal} Galeski, Jacek. \emph{Besicovitch-Federer projection theorem for continuously differentiable mappings having constant rank of the Jacobian matrix.} Mathematische Zeitschrift 289.3 (2018): 995-1010.



\bibitem{hov1} Hovila, Risto. \emph{Transversality of isotropic projections, unrectifiability, and Heisenberg groups}. Revista Matematica Iberoamericana 30.2 (2014): 463-476.



\bibitem{hov} Hovila, R., J\"arvenp\"a\"a, E., J\"arvenp\"a\"a, M., and Ledrappier, F. \emph{Besicovitch-Federer projection theorem and geodesic flows on Riemann surfaces}. Geometriae Dedicata 161.1 (2012): 51-61.


\bibitem{HW41} Hurewicz, Witold, and Henry Wallman. \emph{Dimension theory}. Vol. 4. Princeton university press, 2015.


\bibitem{ise} Iseli, Annina, and Anton Lukyanenko. \emph{Projection theorems for linear-fractional families of projections}. Mathematical Proceedings of the Cambridge Philosophical Society. Vol. 175. No. 3. Cambridge University Press, 2023.




\bibitem{ken} Kenyon, Richard. \emph{Projecting the one-dimensional Sierpinski gasket}. Israel Journal of Mathematics 97 (1997): 221-238.



\bibitem{Mn81} Mañé, Ricardo. \emph{On the dimension of the compact invariant sets of certain non-linear maps}. Dynamical Systems and Turbulence, Warwick 1980: Proceedings of a Symposium Held at the University of Warwick 1979/80. Berlin, Heidelberg: Springer Berlin Heidelberg, 2006.


\bibitem{mar} Marstrand, John M. \emph{Some fundamental geometrical properties of plane sets of fractional dimensions}. Proceedings of the London Mathematical Society 3.1 (1954): 257-302.


\bibitem{mat4} Mattila, Pertti. \emph{Rectifiability; a survey}. arXiv preprint arXiv:2112.00540 (2021).

\bibitem{mat7} Mattila, Pertti. \emph{Hausdorff dimension, projections, intersections, and Besicovitch sets}. New Trends in Applied Harmonic Analysis, Volume 2: Harmonic Analysis, Geometric Measure Theory, and Applications. Cham: Springer International Publishing, 2019. 129-157.

\bibitem{mat5} Mattila, Pertti. \emph{Hausdorff dimension, projections, and the Fourier transform}. Publicacions matematiques (2004): 3-48.


\bibitem{mat2} Mattila, Pertti. \emph{An example illustrating integralgeometric measures}. American Journal of Mathematics (1986): 693-702.



\bibitem{per} Peres, Yuval, and Wilhelm Schlag. \emph{Smoothness of projections, Bernoulli convolutions, and the dimension of exceptions.} Duke Math. J. 102(2) (2000): 193-251.


\bibitem{pug} Pugh, Harrison. \emph{A localized Besicovitch-Federer projection theorem}. arXiv preprint arXiv:1607.01758 (2016).



\bibitem{Rob11} Robinson, James C. \emph{Dimensions, embeddings, and attractors.} Vol. 186. Cambridge University Press, 2010.


\bibitem{SYC91} Sauer, Tim, James A. Yorke, and Martin Casdagli. \emph{Embedology}. Journal of statistical Physics 65.3 (1991): 579-616.


\bibitem{tas1} Tasso, Emanuele. \emph{Rectifiability of a class of integralgeometric measures and applications}. arXiv preprint (2022)


\bibitem{whi1} White, Brian. \emph{Rectifiability of flat chains}. Annals of Mathematics (1999): 165-184.


\bibitem{whi2} White, Brian. \emph{A new proof of Federer’s structure theorem for $k$-dimensional subsets of $\mathbb{R}^{n}$}. Journal of the American Mathematical Society 11.3 (1998): 693-701.



\bibitem{Whi36} Whitney, Hassler. \emph{Differentiable manifolds}. Annals of Mathematics 37.3 (1936): 645-680

\end{thebibliography}
\end{document}